\definecolor{vert}{rgb}{0.02,0.4,0.10}
\pgfplotsset{compat=1.14}
\providecommand{\otherindexspace}[1]{}
\newtheorem{theorem}{Theorem}[section]
\newtheorem{lemma}[theorem]{Lemma}
\newtheorem{proposition}[theorem]{Proposition}
\newtheorem{remark}[theorem]{Remark}
\newtheorem{corollary}[theorem]{Corollary}
\newtheorem{conjecture}[theorem]{Conjecture}
\numberwithin{equation}{section}
\def\vp{\varepsilon}
\def\cal#1{\mathcal{#1}}
\def \R{\mathbb {R}}
\def \N{\mathbb{N}}
\def \Z{\mathbb{Z}}
\def \U{\mathbb{U}}
\def \T{\mathcal{T}}
\def\E{\mathbb{E}}
\def\P{\mathbb{P}}
\def\lb{[\![}
\def\rb{]\!]}
\def\titre{\@title}
\title{Scaling limit of first passage percolation geodesics on planar maps
}
\author{Emmanuel Kammerer \thanks{CMAP, \'Ecole polytechnique, Route de Saclay, 91120 Palaiseau, France, \textsf{emmanuel.kammerer@polytechnique.edu}
}}
\date{\today}
\begin{document}

\maketitle

\begin{abstract}
	We establish the scaling limit of the geodesics to the root for the first passage percolation distance on random planar maps. We first describe the scaling limit of the number of faces along the geodesics. This result enables {us} to compare the metric balls for the first passage percolation and the dual graph distance. It also enables {us} to {give an upper bound for} the diameter of large random maps. Then, we describe the scaling limit of the tree of first passage percolation geodesics to the root via a stochastic coalescing flow of pure jump diffusions. {Using} this stochastic flow, {we also} construct some random metric spaces which we conjecture to be the scaling limits of random planar maps with high degrees. The main tool in this work is a time-reversal of the uniform peeling exploration.
\end{abstract}

\begin{figure}[h]
	\centering
	%MSC classes: 60D05, 60F17, 60H10, 60J60, 60J90, 60J76. \\
	\includegraphics[scale=0.86]{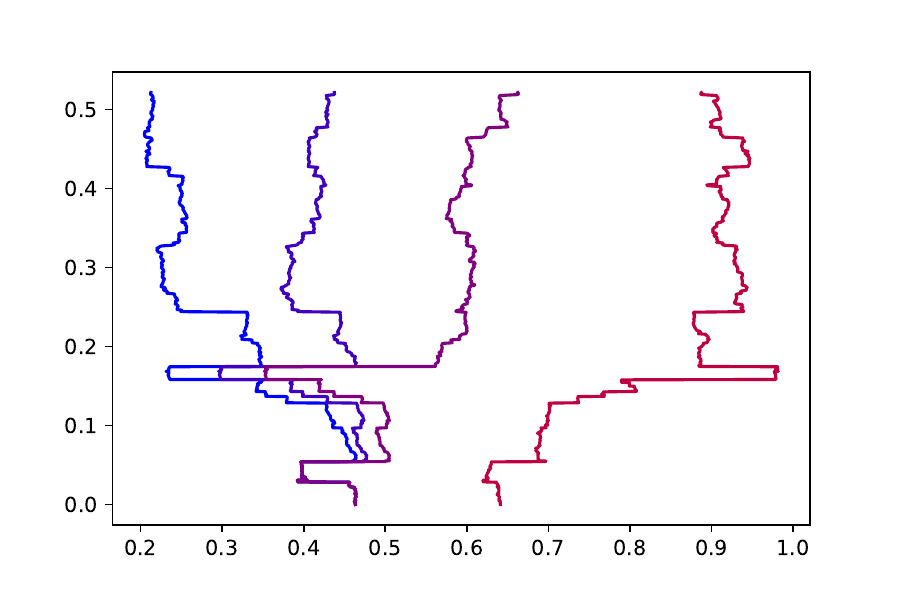}
	\caption{Simulation of four trajectories of the stochastic coalescing flow of diffusions with jumps which arises as the scaling limit of first-passage percolation geodesics.}
	\label{tronc et explo unif}
\end{figure}

\tableofcontents

\section{Introduction}
\subsection{Random Boltzmann maps}
We study the first-passage percolation distance on random Boltzmann planar maps of type $a \in (3/2,5/2]$ which correspond to the dual of the stable maps of \cite{LGM11} when $a \in (3/2, 5/2)$ and to random planar maps in the universality class of the Brownian sphere when $a=5/2$. We give applications to the (dual) graph distance and establish the scaling limit of geodesics to the root.

A planar map is a connected planar graph which is embedded in the sphere and seen up to orientation-preserving transformations. Our planar maps are equipped with a distinguished oriented edge which is called the root edge. The face $f_r$ which is on the right of the root edge is called the root face. We restrict our attention to bipartite planar maps, i.e.\@ such that each face has an even degree. We denote by $\mathcal{M}$ the set of finite bipartite planar maps and for all $\ell \ge1$ we denote by $\mathcal{M}^{(\ell)}$ the set of bipartite planar maps whose root face has degree $2\ell$. See Figure \ref{fig: carte et arbre geodesiques} for an example.

A general way to pick a map at random consists in assigning positive weights to each face according to its degree. Following \cite{MM07}, let ${\bf q} = (q_k)_{k\ge 1}$ be a non-zero sequence of non-negative real numbers. The weight of a map $\mathfrak{m} \in \mathcal{M}$ is defined by
$$
w_{\bf q}(\mathfrak{m}) = \prod_{f \in \mathrm{Faces}(\mathfrak{m}) \setminus \{f_r\}} q_{\deg (f)/2}.
$$
We then define, {for all $\ell \ge 1$,} the partition function 
$$
W^{(\ell)} = \sum_{\mathfrak{m} \in \mathcal{M}^{(\ell)}} w_{\bf q} (\mathfrak{m}).
$$
When the partition function is finite, the sequence $\bf q$ is called admissible. When ${\bf q}$ is admissible, we define the Boltzmann probability measure $\P^{(\ell)}$ on $\mathcal{M}^{(\ell)}$ by setting
$$
\forall \mathfrak{m} \in \mathcal{M}^{(\ell)}, \qquad \P^{(\ell)}(\{\mathfrak{m}\}) = 
\frac{w_{\bf q} (\mathfrak{m})}{W^{(\ell)}}.
$$
A random map of law $\P^{(\ell)}$ is called a random Boltzmann map of perimeter $2\ell$. {
The geometry of random Boltzmann maps heavily depends on the asymptotic behaviour of the partition function $W^{(\ell)}$ as $\ell \to \infty$. It is known that there exists a constant $c_{\bf q}>0$ such that
$$
W^{(\ell)} = O(c_{\bf q}^\ell \ell^{-3/2}) \qquad \text{and} \qquad W^{(\ell)} = \Omega(c_{\bf q}^\ell \ell^{-5/2})
$$
as $\ell \to \infty$. See Section 5.3 of \cite{StFlour}. As in the lecture notes \cite{StFlour}, we say that the weight sequence $\bf q$ is subcritical (or of type $a=3/2$) if there exists a constant $p_{\bf q} > 0$ such that
$$
W^{(\ell)} \mathop{\sim}\limits_{\ell \to \infty} \frac{p_{\bf q}}{2} c_{\bf q}^{\ell+1} \ell^{-3/2},
$$
while the sequence $\bf q$ is said to be critical generic (or of type $a=5/2$) if there exists $p_{\bf q} > 0$ such that
$$
W^{(\ell)} \mathop{\sim}\limits_{\ell \to \infty} \frac{p_{\bf q}}{2} c_{\bf q}^{\ell+1} \ell^{-5/2}.
$$
When $q_k = 0$ for $k$ large enough, i.e.\@ when the face degrees are bounded, and $\bf q$ is admissible, then either $\bf q$ is subcritical, or it is generic critical. To obtain intermediate exponents, one has to favor the appearance of high degrees by choosing $\bf q$ so that the weights decay ``slowly enough''.

We say that $\bf q$ is critical non-generic of type $a$ with $3/2 < a < 5/2$ if there exists $p_{\bf q} > 0$ such that
\begin{equation}\label{eq asymptotique W}
W^{(\ell)} \mathop{\sim}\limits_{\ell \to \infty} \frac{p_{\bf q}}{2} c_{\bf q}^{\ell+1} \ell^{-a}.
\end{equation}
A random Boltzmann map associated with a critical non-generic weight sequence $\bf q$ of type $a$ will also be called a stable map with parameter $a-1/2$.} Examples of such sequences $\bf q$ are exhibited in Lemma 6.1 of \cite{BC}. 

{The study of random Boltzmann maps with $\bf q$ being critical non-generic is motivated by strong connections with random maps coupled with models of statistical mechanics. More precisely, if one considers a random map coupled with a model of statistical mechanics, then, at criticality, the cluster of the root face, which is also called the gasket, is a random Boltzmann map with a critical non-generic weight sequence $\bf q$. See e.g.\@ \cite{BCM19} for Bernoulli percolation, \cite{AM22} for the Ising model and \cite{BBG12} for the O(n) loop model. Random planar maps coupled with models of statistical mechanics were first studied by physicists as models of statistical mechanics in quantum gravity. See e.g.\@ \cite{EK95} and the references therein for the O(n) loop model. Scaling limits of random maps coupled with models of statistical mechanics are conjectured to be described by Liouville quantum gravity, first introduced in physics in \cite{Pol81}, and by an independent conformal loop ensemble, defined in \cite{S09,SW12}. See e.g.\@ Conjecture 2.1 in \cite{HL24} for a precise conjecture in the case the O(n) loop model and see \cite{GHS21} for a result solving the analogous problem in the case of Bernoulli percolation.  
	
\textbf{In what follows, we assume that \eqref{eq asymptotique W} holds for some $a \in (3/2,5/2]$, i.e.\@ that $\bf q$ is either critical generic or critical non-generic.}}

If one conditions a random map of law $\P^{(\ell)}$ to have $n$ vertices and let{s} $n\to \infty$, {then the law of the ball of fixed radius $R$ centered at the root face $f_r$ converges for all $R>0$. This enables {us} to define an infinite random map called the local limit, whose law is given by} Theorem 7.1 of \cite{StFlour} and is denoted by $\P^{(\ell)}_\infty$. A random map of law $\P^{(\ell)}_\infty$ is called an infinite random Boltzmann map of perimeter $2\ell$. 

{When $\mathfrak{m}$ is a (possibly infinite) map, we denote by $\mathfrak{m}^\dagger$ the dual map associated with $\mathfrak{m}$} which is obtained by switching the roles of the faces and vertices and saying that two vertices of $\mathfrak{m}^\dagger$ are connected by an edge if the corresponding faces of $\mathfrak{m}$ are adjacent. {Different distances can then be studied, notably:
	\begin{itemize}
		\item The (primal) graph distance $d_\mathrm{gr}$ on the vertices of $\mathfrak{m}$, obtained by assigning a length $1$ to each edge;
		\item The dual graph distance $d_\mathrm{gr}^\dagger$ on the faces of $\mathfrak{m}$, which are the vertices of $\mathfrak{m}^\dagger$, obtained by assigning a length $1$ to each dual edge. This distance corresponds to the graph distance on the dual map $\mathfrak{m}^\dagger$;
		\item The first-passage percolation (fpp) distance $d^\dagger_{\mathrm{fpp}}$ on the faces of $\mathfrak{m}$, obtained by assigning i.i.d.\@ exponential random edge lengths of parameter $1$ to the dual edges.
\end{itemize}}
{The primal graph distance is now well understood in the generic case $a=5/2$ since the works of Le Gall \cite{LG13} and Miermont \cite{Mie13}. The scaling limit of random maps of law $\P^{(\ell)}$ in the case $a=5/2$ as $\ell \to \infty$ is given in Theorem 8 of \cite{BM17}: if one divides the distance $d_\mathrm{gr}$ by $\sqrt{\ell}$, then the random metric space converges in distribution towards the so-called free Brownian disk which was initially introduced in \cite{Bett15}. It is conjectured that in the generic case $a=5/2$ the dual distances $d^\dagger_\mathrm{gr}$ and $d^\dagger_\mathrm{fpp}$ behave in the same way as the primal distance $d_\mathrm{gr}$ since the face degrees are not too large. In the case of triangulations, the question is solved and the distances $d_\mathrm{gr}$, $d^\dagger_\mathrm{gr}$ and $d^\dagger_\mathrm{fpp}$ are actually very close to each other up to a multiplicative constant as shown in \cite{CLG19}. In the critical non-generic case $a \in (3/2,5/2)$ the scaling limit of random maps of law $\P^{(1)}$ conditioned to have $n$ vertices equipped with de distance $d_\mathrm{gr}$ as $n \to \infty$ is determined in \cite{CMR25} and the scaling limit of random maps of law $\P^{(\ell)}$ should be obtained in the same way. Yet, when $a \in (3/2,5/2)$, the behaviour of dual distances $d^\dagger_\mathrm{gr}$ and $d^\dagger_\mathrm{fpp}$ is drastically different from the primal distance $d_\mathrm{gr}$ insofar as the appearance of large faces leads to high degree vertices playing the role of hubs in the dual map. The geometry of large random maps of law $\P^{(\ell)}$ and infinite random maps of law $\P^{(\ell)}_\infty$ for the dual distances  $d^\dagger_\mathrm{gr}$ and $d^\dagger_\mathrm{fpp}$ is only partially understood. }

The geometry of infinite random maps {equipped with the dual distances} under $\P^{(\ell)}_\infty$ was first studied in the case of the uniform infinite planar quadrangulation (which is a particular case of the case $a=5/2$) in \cite{CLG17} and \cite{CC18}, then in \cite{BC} in the cases $a\in (3/2,5/2)\setminus\{2\}$ and in \cite{BCM} in the critical case $a=2$.  {The geometry of the map satisfies a phase transition at $a=2$ in the sense that the growth of the balls centered at the root is polynomial in the dilute phase $a\in (2,5/2)$, exponential in the dense phase $a \in (3/2,2)$ and intermediate in the case $a=2$.} The scaling limit of the distances $d^\dagger_\mathrm{gr}$ to the root under $\P^{(\ell)}$ or $\P^{(\ell)}_\infty$ as $\ell \to \infty$ was established using the framework of growth-fragmentations in \cite{BCK} in the case of triangulations (corresponding informally to the case $a=5/2$), then in \cite{BBCK} in the case $a \in (2,5/2)$ and in \cite{Kam23CLE} in the case $a=2$. {More precisely, it is known that in a map of law $\P^{(\ell)}$, the distances $d^\dagger_\mathrm{gr}$ from $f_r$ to the other faces, once divided by $\ell^{a-2}$ in the case $a\in (2,5/2]$ and by $\log \ell$ in the case $a=2$ converge in distribution. In the case $a \in (2,5/2]$, it is conjectured that random maps of law $\P^{(\ell)}$ equipped with $d^\dagger_\mathrm{gr}$ or $d^\dagger_\mathrm{fpp}$ satisfy a scaling limit towards a random compact metric space. In the case $a=2$ it is conjectured in \cite{Kam23CLE} that they satisfy a scaling limit towards a non-compact random metric space related to the conformal loop ensemble of parameter $\kappa=4$ on an independent critical Liouville quantum gravity. When $a \in (3/2,2)$ random planar maps of law $\P^{(\ell)}$ equipped with $d^\dagger_\mathrm{gr}$ are not expected to satisfy a scaling limit when $\ell \to \infty$.}

{The study of the fpp distance $d^\dagger_\mathrm{fpp}$ proves to be simpler than the study of the dual graph distance $d^\dagger_\mathrm{gr}$ and gives the same scaling limit in the cases $a \in [2,5/2]$ (with the disappearance of the $\log \ell$ factor in the case $a=2$). Actually, the study of the fpp distance can be seen as a first step which turns out to be useful for the study of the dual graph distance, see e.g.\@ \cite{BC, BCM, Kam23, Kam23CLE}. In this work, we thus focus on the fpp distance $d^\dagger_{\mathrm{fpp}}$ on the dual map $\mathfrak{m}^\dagger$ under $\P^{(\ell)}$ and $\P^{(\ell)}_\infty$ and we go further than the distances to the root in the study of the geometry of the map.} For {all} faces $f,f'$ of $\mathfrak{m}$, the shortest path for the fpp distance from $f$ to $f'$ is unique since the exponential distribution has no atom{s}. It does not necessarily correspond to a shortest path for the dual graph distance (see e.g.\@ Figure \ref{fig: carte et arbre geodesiques}). The main focus of this work is the study of the shortest paths to the root face $f_r$ and their scaling limits. These shortest paths actually form a tree which is denoted by $\T(\mathfrak{m})$. This study also has applications to the dual graph distance $d^\dagger_{\mathrm{gr}}$, which corresponds to the graph distance on $\mathfrak{m}^\dagger$.

\begin{figure}[h]
	\centering
	\includegraphics[scale=1.7]{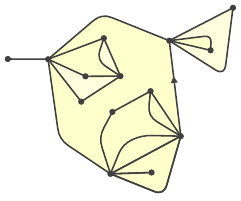}
	\includegraphics[scale=1.7]{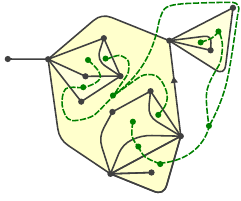}
	\caption{Left: a planar map $\mathfrak{m} \in \mathcal{M}^{(4)}$. The root edge is marked with an arrow and the root face corresponds to the outer face. Right: illustration for the tree $\T(\mathfrak{m})$ in dashed green lines of fpp geodesics to the root face $f_r$. Note that $\T(\mathfrak{m})$ is not a deterministic function of $\mathfrak{m}$ but depends on the random exponential edge lengths.}
	\label{fig: carte et arbre geodesiques}
\end{figure}

An important tool in the study of distances on random planar maps is the peeling exploration. See Subsection \ref{sous-section explo uniforme} for more details, as well as the lecture notes \cite{StFlour} or \cite{BuddPeeling} for a detailed presentation of this technique. It consists in a step by step exploration of the map $\mathfrak{m}$ starting from the root face, where at each step we discover what is behind an edge on the boundary of the unexplored region. This edge is called the peeled edge. The explored region at step $n\ge0$ is denoted by $\overline{\mathfrak{e}}_n$. We denote by $(P(n))_{n\ge 0}$ the perimeter process which is defined as half of the number of edges on the boundary of the explored region $\overline{\mathfrak{e}}_n$ at time $n\ge 0$. Under $\P_\infty^{(\ell)}$, the perimeter process is a Markov chain which is a random walk on $\Z$ of step distribution $\nu$ conditioned to stay positive, whose scaling limit is described by a stable Lévy process conditioned to stay positive $\Upsilon_a^\uparrow$. See Subsection \ref{sous-section loi de l'explo} for the definition of $\nu$ and Subsection \ref{sous-section preliminaires marche conditionnee} for the definition of $\Upsilon_a^\uparrow$. The choice of the peeled edge {is crucial in order} to study different properties of the map. In this work, the peeled edge is chosen uniformly at random on the boundary of $\overline{\mathfrak{e}}_n$ for all $n\ge 0$ so that the explored region $\overline{\mathfrak{e}}_n$ follows the growth of the fpp balls of center $f_r$.

\subsection{Main results}

Our results can be divided into two parts. We first focus on the scaling limit of the number of faces along the geodesics, hence relating the fpp distance with the dual graph distance on the geodesics. This scaling limit enables {us} to compare the growth of fpp and dual graph balls and also to obtain an upper bound for the diameter of large random planar maps for the dual graph distance. Then, we describe the scaling limit of the fpp geodesics to the root in large random planar maps.

\paragraph{Number of faces.} We first give the scaling limit of the number of faces in the fpp geodesics to the root and give applications to the dual graph distance. We choose to state these results for infinite Boltzmann planar maps of law $\P_\infty^{(1)}$. When $a \in (2,5/2]$, as in Definition 11.1 in \cite{StFlour}, we let ${e}_{\bf q} = \sum_{k\ge 0} (2k+1){q_{k+1} c_{\bf q}^k}$ be the so-called mean exposure. Recall the constant $p_{\bf q}$ from \eqref{eq asymptotique W}.
\begin{theorem}\label{th nombre de faces}
	%Let $a \in (3/2, 5/2]$. Let $\bf q$ be a critical weight sequence of type $a$. 
	For all $n \ge 0$, let $\Gamma_n$ be the shortest path for $d^\dagger_{\mathrm{fpp}}$ from $f_r$ to some face of $\overline{\mathfrak{e}}_n$ adjacent to the boundary which is chosen arbitrarily. Then we have the following convergences:
	\begin{itemize}
		\item If $a \in (2,5/2]$, then for the topology of uniform convergence on compact subsets,
		$$
		\text{Under } \P^{(1)}_\infty, \qquad
		\left(
		n^{-\frac{a-2}{a-1}}\# \mathrm{Faces}(\Gamma_{\lfloor nt \rfloor}) 
		\right)_{t\ge 0}
		\mathop{\longrightarrow}\limits_{n \to \infty}^{(\mathrm{d})}
		\left(
		\frac{e_{\bf q}}{2}
		\int_0^t \frac{ds}{\Upsilon_a^\uparrow(p_{\bf q} s)}
		\right)_{t\ge 0};
		$$
		\item If $a =2$, then 
		$$
		\text{Under } \P^{(1)}_\infty, \qquad
		(\log n)^{-2} \# \mathrm{Faces}(\Gamma_{ n }) 
		\mathop{\longrightarrow}\limits_{n \to \infty}^{(\mathrm{L}^1)}
		\frac{1}{\pi^2};
		$$
		\item If $a \in (3/2,2)$, then 
		$$
		\text{Under } \P^{(1)}_\infty, \qquad
		(\log n)^{-1} \# \mathrm{Faces}(\Gamma_{ n }) 
		\mathop{\longrightarrow}\limits_{n \to \infty}^{(\mathrm{L}^1)}
		\frac{2}{\pi \tan((2-a) \pi)}.
		$$
	\end{itemize}
\end{theorem}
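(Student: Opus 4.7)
The plan is to reduce $\#\mathrm{Faces}(\Gamma_n)$ to a sum along the peeling trajectory and then apply the scaling limit of the perimeter process $P$. The target intermediate identity is
$$
\#\mathrm{Faces}(\Gamma_n) \;\approx\; \frac{e_{\bf q}}{2}\sum_{k=1}^{n}\frac{1}{P(k)},
$$
understood in probability, uniformly on compact sets in $t$ after rescaling (with the appropriate constant replacing $e_{\bf q}$ when $a\le 2$, where the exposure diverges).

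First I would set up the combinatorial structure of the fpp geodesic tree $\T$ under uniform peeling. Because the dual edge weights are i.i.d.\ exponentials, the uniform peeling discovers faces in the same order as the fpp exploration: when a new face $f'$ is revealed at step $k$ by peeling an edge $e$, the face $f$ on the explored side of $e$ is the unique parent of $f'$ in $\T$. Writing $X_k = \mathbf{1}\{\text{the face born at step } k \text{ is an ancestor of the endpoint } f^{(n)} \text{ in } \T\}$, one has
$$
\#\mathrm{Faces}(\Gamma_n) \;=\; 1 + \sum_{k=1}^{n} X_k.
$$

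The key step, and the main obstacle, is to estimate $\E[X_k \mid \mathcal{F}_n]$ for a filtration encoding the perimeter trajectory and the endpoint. Here the time-reversal of the uniform peeling, advertised as the central tool of the paper, should come in decisively: in the reversed exploration one removes boundary faces one at a time in an exchangeable fashion, so the probability that the face born at step $k$ ends up an ancestor of $f^{(n)}$ equals, up to lower-order errors, the fraction of boundary edges at time $k$ that are inherited by $f^{(n)}$, and an averaging / exchangeability argument yields
$$
\E[X_k \mid \mathcal{F}_n] \;\approx\; \frac{e_{\bf q}}{2\,P(k)},
$$
where $e_{\bf q}$ arises as the expected number of boundary edges contributed by a fresh face (the mean exposure) and $2P(k)$ as the total boundary-edge count. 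A second-moment or martingale concentration argument applied to $\sum_{k=1}^n\bigl(X_k - \E[X_k \mid \mathcal F_n]\bigr)$ then promotes this in-mean identity to convergence in probability.

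To conclude, for $a \in (2,5/2]$ one combines the scaling limit $P(\lfloor ns \rfloor)/n^{1/(a-1)} \to \Upsilon_a^\uparrow(p_{\bf q} s)$ with a Riemann-sum / continuous-mapping argument, using that $\Upsilon_a^\uparrow(s)\sim s^{1/(a-1)}$ as $s\to 0^+$ with $1/(a-1)<1$ to guarantee integrability of $1/\Upsilon_a^\uparrow(p_{\bf q}\,\cdot)$ near $0$; the net rescaling $n^{1 - 1/(a-1)} = n^{(a-2)/(a-1)}$ then matches the announced rate. For $a=2$ (resp.\ $a\in(3/2,2)$), integrability fails and the leading behaviour of $\sum_k 1/P(k)$ is dominated by small-$k$ contributions where $P(k)$ is $O(1)$; a direct asymptotic analysis using the heavy tail $\nu(-k)\sim c\,k^{-a}$, the potential kernel of the random walk conditioned to stay positive, and the harmonic averages $\sum_{k\le n}\E[1/P(k)]$ produces the $(\log n)^2$ and $\log n$ rates, with the explicit constants $1/\pi^2$ and $2/(\pi\tan((2-a)\pi))$ arising from the standard Beta-type integrals attached to these stability indices.
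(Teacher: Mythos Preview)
Your outline for the dilute case $a\in(2,5/2]$ is essentially the paper's approach. The reversed uniform exploration gives something sharper than your $\E[X_k\mid\mathcal F_n]\approx e_{\bf q}/(2P(k))$: conditionally on the \emph{entire} perimeter trajectory $(P(i))_{0\le i\le n}$, the indicators $X_i$ are \emph{independent} Bernoulli with explicit parameters $\theta_i=\tfrac{2\Delta P(i)+1}{2P(i+1)}\mathbf{1}_{\Delta P(i)\ge 0}$. One then checks $\E[\theta_i\mid P(i)=p]\to e_{\bf q}/(2p)$ as $p\to\infty$, and the variance bound plus the Riemann-sum passage to $\int_0^t ds/\Upsilon_a^\uparrow(p_{\bf q}s)$ go through as you describe.

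The genuine gap is for $a\le 2$. Your target identity $\#\mathrm{Faces}(\Gamma_n)\approx c\sum_k 1/P(k)$ is \emph{wrong} there, not just with a different constant: since $e_{\bf q}=\sum_{k\ge 0}(2k+1)\nu(k)=\infty$, the conditional expectation $\E[\theta_i\mid P(i)=p]$ does not scale like $1/p$. Using the tail $\nu([k,\infty))\sim p_{\bf q}\cos(a\pi)\,k^{1-a}/(a-1)$ one finds
$\E[\theta_i\mid P(i)=p]\sim p_{\bf q}\,(\log p)/p$ when $a=2$ and $\sim C_a/p^{a-1}$ when $a\in(3/2,2)$.
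The relevant sums are therefore $\sum_i (\log i)/P(i)$ and $\sum_i 1/P(i)^{a-1}$, and \emph{this} is where the constants $1/\pi^2$ and $2/(\pi\tan((2-a)\pi))$ emerge (via $\E[i/P(i)^{a-1}]\to \E[1/\Upsilon_a^\uparrow(p_{\bf q})^{a-1}]$ and a Beta-integral identity). Your diagnosis that the sum is ``dominated by small-$k$ contributions where $P(k)$ is $O(1)$'' is also incorrect: all logarithmic scales contribute equally, and obtaining the $L^1$ convergence requires a nontrivial decorrelation lemma (proved via a local limit theorem for the conditioned walk) showing that $i/P(i)^{a-1}$ and $j/P(j)^{a-1}$ become asymptotically uncorrelated once $j/i\to\infty$. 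That decorrelation step, absent from your sketch, is the main technical content in these two regimes.
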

{One can compare the above result with the study of the growth of balls for the dual graph distance and see that the exact same scaling limits appear in Theorem 4.2 of \cite{BC} in the dilute phase $a \in (2,5/2)$ and in Proposition 4 of \cite{BCM} in the case $a=2$ up to a multiplicative constant.}

{The above observation is not a coincidence:} Theorem \ref{th nombre de faces} has some applications to the dual graph distance $d^\dagger_{\mathrm{gr}}$. Namely, in the cases $a \ge 2$, {we are able to} compare the distances $d^\dagger_{\mathrm{fpp}}$ and $d^\dagger_{\mathrm{gr}}$, improving the results of \cite{CM20} in the case of the uniform exploration. When $\mathfrak{m}$ is an infinite (one-ended) planar map, for all $r \ge 0$, we denote by $\overline{\mathrm{Ball}}^{\mathrm{fpp}}_r(\mathfrak{m})$ the hull of the ball with center $f_r$ of radius $r$ for the fpp distance obtained from the ball of radius $r$ by filling in the holes which contain finite portions of $\mathfrak{m}$. Similarly, for all $r \ge 0$, we denote by $\overline{\mathrm{Ball}}^\dagger_r$ the hull of the ball with center $f_r$ of radius $r$ for the dual graph distance obtained by filling in the holes which contain finite portions of $\mathfrak{m}$.

\begin{corollary}\label{cor inclusion des boules dilue}
	Suppose ${\bf q}$ of type $a \in (2,5/2]$. Under $\P_\infty^{(1)}$, for all $\vp>0$, almost surely, for all $r$ large enough,
	$$
	\overline{\mathrm{Ball}}^{\mathrm{fpp}}_r(\mathfrak{m}) \subset \overline{\mathrm{Ball}}^\dagger_{\lfloor (1+\vp)e_{\bf q}r \rfloor}(\mathfrak{m}).
	$$
\end{corollary}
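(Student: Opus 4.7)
The plan is to reduce the hull inclusion to an almost sure bound on the number of faces on every fpp geodesic ending in the raw fpp ball, and then to apply Theorem~\ref{th nombre de faces} uniformly. First, I realise the fpp exploration via the uniform peeling with competing exponentials: attach i.i.d.\ $\mathrm{Exp}(1)$ clocks $E_k$ to the boundary edges, peel the edge of minimal clock at each step, and set $T_n = \sum_{k=0}^{n-1} E_k/(2P(k))$. Under this coupling, $\overline{\mathfrak{e}}_n = \overline{\mathrm{Ball}}^{\mathrm{fpp}}_{T_n}(\mathfrak{m})$, and for $N(r) = \max\{n : T_n \le r\}$ the \emph{raw} fpp ball of radius $r$ consists exactly of the faces explicitly revealed by the peeling up to step $N(r)$ (the remaining faces of $\overline{\mathfrak{e}}_{N(r)}$ being those filled in by the hull).

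Next, since $\overline{\mathrm{Ball}}^\dagger_R$ is itself a hull of the one-ended map $\mathfrak{m}$, the inclusion $\overline{\mathrm{Ball}}^{\mathrm{fpp}}_r \subset \overline{\mathrm{Ball}}^\dagger_R$ holds as soon as every face $f$ of the raw fpp ball satisfies $d^\dagger_{\mathrm{gr}}(f_r,f) \le R$ (finite complement components of a hull are themselves in the hull). Such a face equals some $f_k$ revealed at a peeling step $k \le N(r)$, and is then adjacent to the boundary of $\overline{\mathfrak{e}}_k$. Its fpp geodesic $\Gamma(f)$ stays entirely inside the raw fpp ball (every prefix is shorter), so
\[
d^\dagger_{\mathrm{gr}}(f_r, f) \le |\mathrm{Faces}(\Gamma(f))| - 1 = |\mathrm{Faces}(\Gamma_k)| - 1
\]
for the arbitrary choice $\Gamma_k := \Gamma(f_k)$ allowed in Theorem~\ref{th nombre de faces}.

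Combining the process convergence of Theorem~\ref{th nombre de faces} with the companion scaling $n^{-(a-2)/(a-1)}\, T_n \to \tfrac{1}{2}\int_0^1 ds/\Upsilon_a^\uparrow(p_{\bf q} s)$ (a classical peeling computation based on the scaling limit $n^{-1/(a-1)} P(\lfloor n\cdot\rfloor) \to \Upsilon_a^\uparrow(p_{\bf q}\cdot)$ and a law of large numbers for the $E_k$), I obtain that $\max_{k \le N(r)} |\mathrm{Faces}(\Gamma_k)|/r \to e_{\bf q}$ in probability as $r\to\infty$ (the maximum being asymptotically equal to the endpoint, since the scaling limit is the increasing integral $\tfrac{e_{\bf q}}{2}\int_0^{\cdot}\!ds/\Upsilon_a^\uparrow(p_{\bf q} s)$).

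To upgrade this into an a.s.\ inclusion valid for all $r$ large enough, I apply the previous estimate along a geometric subsequence $r_j = (1+\vp/4)^j$, use quantitative deviation bounds (obtained from the one-sided tails of the stable process $\Upsilon_a^\uparrow$ and standard concentration for the weighted exponential sum $\sum E_k/P(k)$) to make the relevant probabilities summable in $j$, and apply Borel--Cantelli; the monotonicity $r \mapsto \overline{\mathrm{Ball}}^{\mathrm{fpp}}_r$ then interpolates between consecutive $r_j$ at the cost of the extra factor $(1+\vp)$. The main obstacle is exactly this fourth step: turning the weak convergence provided by Theorem~\ref{th nombre de faces} into summable deviation bounds that remain valid uniformly over all revealed faces, rather than over the single arbitrary face singled out in the statement of the theorem.
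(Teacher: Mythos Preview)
Your reduction in the first two steps is sound, and step~3 does follow from Theorem~\ref{th nombre de faces} combined with the companion limit for $T_n$. The genuine gap is step~4, as you yourself flag: distributional convergence alone gives no summable tail bounds, and invoking ``tails of $\Upsilon_a^\uparrow$'' does not obviously produce them either --- you would need uniform-in-$n$ quantitative estimates on $\P(|\mathrm{Faces}(\Gamma_n)| \ge (1+\vp) e_{\bf q} T_n)$, and these do not come for free from a scaling limit.

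The paper's proof bypasses this detour through the scaling limit entirely. It works \emph{conditionally on the perimeter process} $P_\infty$: by \eqref{proba coalescence}, the number of faces on the geodesic from $f_r$ to any fixed boundary edge of $\overline{\mathfrak e}_n$ is a sum of conditionally independent Bernoulli variables with parameters $\theta_i$. Chernoff's inequality then yields, for each such edge, a bound of the form $\exp(-c\vp^2 \sum_{i<n} \theta_i)$, which decays stretched-exponentially in $n$ thanks to the almost sure envelope $n^{(1-\vp)/(a-1)} \le P_\infty(n) \le n^{(1+\vp)/(a-1)}$ (Lemma~10.9 of \cite{StFlour}). This beats the polynomial factor $2P_\infty(n)$ coming from the union bound over all boundary edges, so Borel--Cantelli over $n$ gives the a.s.\ inclusion $\overline{\mathfrak e}_n \subset \overline{\mathrm{Ball}}^\dagger_{\lfloor (1+\vp)\sum_{i<n}\theta_i\rfloor}$ for all large $n$. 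The remaining work is the a.s.\ asymptotic $\sum_{i<n} \theta_i \sim e_{\bf q} T_n$, established by further concentration (exponential supermartingale bounds for $\sum(\theta_i - \E[\theta_i\mid P_\infty(i)])$ and Bernstein's inequality for $\sum(\mathcal E_i - 1)/P_\infty(i)$), again conditional on $P_\infty$.

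The lesson is that the conditional Bernoulli structure behind Theorem~\ref{th nombre de faces} is strictly more information than its statement, and is exactly what produces summable bounds; trying to recover them from the scaling limit is the wrong direction.
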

{The above result improves Theorem 1.1 of \cite{CM20} in the case of the uniform exploration which enables {us} to obtain the same inclusion as above but with a non-explicit constant in place of $(1+\vp)e_{\bf q}$.}
An analogous result holds in the case $a=2$.
\begin{corollary}\label{cor inclusion des boules cas Cauchy}
	Suppose $\bf q$  of type $a=2$. Under $\P_\infty^{(1)}$, for all $\vp>0$, with probability $1-o(1)$ as $r \to \infty$, 
	$$
	\overline{\mathrm{Ball}}^{\mathrm{fpp}}_{r} (\mathfrak{m}) 
	\subset \overline{\mathrm{Ball}}^{\mathrm{\dagger}}_{\lfloor (1+\vp) \pi^2p^2_{\bf q} r^2\rfloor}(\mathfrak{m})  .
	$$
\end{corollary}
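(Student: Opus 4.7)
The plan is to combine the $a=2$ case of Theorem \ref{th nombre de faces} with a time change relating the peeling step to the fpp radius of the explored region. Since the peeled edge is chosen uniformly and the dual edges carry i.i.d.\@ exponential weights, at step $k$ the next peeling time is the minimum of $2P(k)$ independent exponentials, so $\overline{\mathrm{Ball}}^{\mathrm{fpp}}_r(\mathfrak{m}) = \overline{\mathfrak{e}}_{N(r)}$, where $N(r)$ is the first-passage inverse of $n \mapsto T_n := \sum_{k=0}^{n-1}\xi_k/(2P(k))$ with $\xi_k$ i.i.d.\@ exponential independent of the map. Using the Cauchy-type fluctuations of $P$ under $\P^{(1)}_\infty$, I would analyse $T_n$ directly to obtain an asymptotic of the form $\log N(r) = (1+o(1))\,\pi^2 p_{\bf q}\, r$ in probability as $r\to\infty$, which is exactly the scaling needed to produce the constant $\pi^2 p_{\bf q}^2$ appearing in the statement.

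Next, for any face $f \in \overline{\mathfrak{e}}_n$, one has $d^\dagger_{\mathrm{gr}}(f_r, f) \le \#\mathrm{Faces}(\gamma_f)$ where $\gamma_f$ is the fpp geodesic from $f_r$ to $f$; the case $a=2$ of Theorem \ref{th nombre de faces} gives, for one arbitrarily chosen boundary face, $(\log n)^{-2}\,\#\mathrm{Faces}(\Gamma_n) \to 1/\pi^2$ in $L^1$ and hence in probability. To upgrade this single-geodesic statement into a simultaneous bound over all faces of $\overline{\mathfrak{e}}_{N(r)}$, I would combine the fact that the number of boundary faces grows at most polynomially in $N(r)$ with a one-sided tail estimate $\P(\#\mathrm{Faces}(\Gamma_n) > (1+\vp/2)(\log n)^2/\pi^2) \to 0$ fast enough to survive a union bound; this estimate should be extractable from the peeling arguments leading to Theorem \ref{th nombre de faces} rather than from its $L^1$ statement alone. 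Interior faces of the hull lie adjacent to some earlier boundary face, so they are handled by the same bound up to an additive $O(1)$, absorbed in the $o(1)$ terms.

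Combining the two steps, with probability $1-o(1)$,
$$
\max_{f \in \overline{\mathrm{Ball}}^{\mathrm{fpp}}_r} d^\dagger_{\mathrm{gr}}(f_r, f) \;\le\; (1+\vp/2)\,\frac{(\log N(r))^2}{\pi^2} \;\le\; (1+\vp)\,\pi^2 p_{\bf q}^2\, r^2,
$$
which is exactly the claimed inclusion of hulls.

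The main obstacle will be promoting the single-geodesic $L^1$ convergence of Theorem \ref{th nombre de faces} to a simultaneous bound over the boundary. In the dilute regime $a>2$ the $n^{(a-2)/(a-1)}$ scale is polynomial, so individual fluctuations are easily absorbed by the inclusion constant, but in the critical Cauchy case the $(\log n)^2$ scale is much tighter and the tail bound needed to beat a polynomial union bound is less forgiving; one must revisit the proof of Theorem \ref{th nombre de faces} to extract a quantitative upper tail. Pinning down the logarithmic constant $\pi^2 p_{\bf q}$ in $\log N(r) \sim \pi^2 p_{\bf q}\, r$ is also delicate, because of the slowly varying corrections of the perimeter process at $a=2$ which must be controlled when inverting $T_n$.
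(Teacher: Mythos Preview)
Your plan is essentially the paper's own argument: time change via $T_n=\sum_{k<n}\mathcal{E}_k/(2P_\infty(k))$, then a union bound over boundary edges combined with a conditional concentration estimate on the number of faces along each geodesic. The asymptotic $T_n/\log n \to 1/(\pi^2 p_{\bf q})$ in probability is exactly what the paper invokes (Proposition~3 of \cite{BCM}), so your inversion $\log N(r)\sim \pi^2 p_{\bf q}\, r$ is correct.

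The ``obstacle'' you flag is not actually an obstacle, and you are making it harder than it is. The key structural point---implicit in your remark about going back to the peeling arguments---is that \emph{conditionally on $P_\infty$}, every boundary edge of $\overline{\mathfrak{e}}_n$ has geodesic face-count distributed as $\sum_{i<n}X_i$ with $X_i$ independent Bernoulli$(\theta_i)$, the \emph{same} law for every edge. Chernoff then gives
\[
\P^{(1)}_\infty\!\Big(\overline{\mathfrak{e}}_n\not\subset \overline{\mathrm{Ball}}^\dagger_{\lfloor(1+\vp)\sum_i\theta_i\rfloor}\,\Big|\,P_\infty\Big)
\le 2P_\infty(n)\exp\!\Big(-c\vp^2\sum_{i<n}\theta_i\Big),
\]
and since $\sum_{i<n}\theta_i\sim(\log n)^2/\pi^2$ in probability (this is \eqref{eq cv theta critique}), the exponential factor is $\exp(-c'(\log n)^2)$ and crushes the polynomial $P_\infty(n)$. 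So the tail bound you worry about is immediate; the Cauchy case is in fact \emph{easier} to union-bound than you suggest, because $(\log n)^2$ in the exponent beats any polynomial.

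Two minor corrections: interior faces of the hull need no separate argument---once all boundary faces are within graph distance $R$, the hull inclusion $\overline{\mathfrak{e}}_n\subset\overline{\mathrm{Ball}}^\dagger_R$ follows from the topology of hulls. And your final display uses the convergence of $\sum\theta_i$, not directly Theorem~\ref{th nombre de faces}; the relevant input is \eqref{eq cv theta critique}.
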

A last consequence of the counting of faces along geodesics to the root, which is obtained using the martingale techniques introduced in \cite{Kam23}, is an upper bound for the diameter $\mathrm{Diam}^\dagger_{\mathrm{gr}}(\mathfrak{m})$ of (finite) random maps $\mathfrak{m}$ in the case $a \in (3/2,2)$ when they are equipped with the dual graph distance. {This completes the results of Part II of \cite{BCR24} and Theorem 1.2 of \cite{Kam23} which show that in the respective cases $a\in (2,5/2]$ and $a=2$ the diameter $\mathrm{Diam}^\dagger_{\mathrm{gr}}(\mathfrak{m})$ under the law $\P^{(\ell)}$ is of order respectively $\ell^{2-a}$ and $(\log \ell)^2$ as $\ell \to \infty$.}
\begin{theorem}\label{th diametre cas dense}
	Suppose ${\bf q}$ is of type $a \in (3/2,2)$. Then, there exists a constant $C_{\bf q}>0$ such that under $\P^{(\ell)}$ as $\ell \to \infty$, we have with probability $1-o(1)$
	$$
	\mathrm{Diam}_{\mathrm{gr}}^\dagger
	(\mathfrak{m})
	\le C_{\bf q}  \log \ell.
	$$
\end{theorem}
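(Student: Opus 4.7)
The strategy is to bound the eccentricity of the root face $f_r$ for the dual graph distance, since $\mathrm{Diam}_{\mathrm{gr}}^\dagger(\mathfrak{m}) \le 2 \max_f d_{\mathrm{gr}}^\dagger(f_r, f)$ by the triangle inequality. For any face $f$ of $\mathfrak{m}$, the fpp geodesic from $f_r$ to $f$ is a path in $\mathfrak{m}^\dagger$ whose length in the dual graph metric equals its number of faces minus one; hence $d_{\mathrm{gr}}^\dagger(f_r, f) \le \#\mathrm{Faces}(\text{fpp geodesic from } f_r \text{ to } f) - 1$. It therefore suffices to prove that, with probability $1-o(1)$ under $\P^{(\ell)}$, no fpp geodesic from $f_r$ in $\mathfrak{m}$ visits more than $\frac{C_{\bf q}}{2}\log \ell$ faces.

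The first step is to transfer the problem to the infinite-volume setting. Using the standard absolute continuity between $\P^{(\ell)}$ and $\P^{(\ell)}_\infty$, then between $\P^{(\ell)}_\infty$ and $\P^{(1)}_\infty$ via the peeling exploration, the question reduces to estimates on the uniform peeling of an infinite Boltzmann map. Since the perimeter process $(P(n))_{n\ge 0}$ is a random walk in the domain of attraction of an $(a{-}1)$-stable law conditioned to stay positive, a hitting-time estimate shows that the peeling exploration of a finite map of perimeter $2\ell$ exhausts it in $N(\ell) \le \ell^K$ steps with high probability, for some explicit $K = K(a) > 0$.

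Next, I would upgrade Theorem \ref{th nombre de faces} in the dense regime from its $L^1$ statement to a tail estimate of the form
\[
\P^{(1)}_\infty\bigl(\#\mathrm{Faces}(\Gamma_n) \ge A \log n\bigr) \le n^{-\phi(A)},
\]
with $\phi(A)\to\infty$ as $A\to\infty$, uniformly in the boundary face defining $\Gamma_n$. Following the martingale approach of \cite{Kam23}, one designs a suitable (super)martingale involving $P(n)$ and the running face count of the geodesic, then applies a maximal inequality. Since every face of $\mathfrak{m}$ is first discovered at some peeling step $n(f) \le N(\ell)$ — at which moment its fpp geodesic to $f_r$ is already determined by the exploration so far — a union bound over the $\ell^K$ peeling steps yields $\max_f \#\mathrm{Faces}(\text{fpp geodesic from } f_r \text{ to } f) \le C_{\bf q} \log \ell$ with high probability, which concludes the proof.

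The main obstacle is exactly this tail-bound upgrade. In the dense regime $a \in (3/2, 2)$, single peeling steps may swallow a macroscopic piece of the map, so one must control carefully how many of the newly absorbed faces truly extend the fpp geodesic. Building a (super)martingale whose fluctuations faithfully capture the asymptotic rate $\frac{2}{\pi\tan((2-a)\pi)}$, and whose tails are strong enough to survive a polynomial union bound in $\ell$, is the delicate point; it closely parallels — while being somewhat more subtle than — the argument developed in \cite{Kam23} for the boundary case $a=2$.
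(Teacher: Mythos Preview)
Your high-level structure --- bound the number of faces in fpp geodesics via a supermartingale, then beat a polynomial union bound --- matches the paper. But two steps are different enough to matter, and one of them is a genuine gap.

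\textbf{The gap: faces swallowed in holes.} You write that ``every face of $\mathfrak{m}$ is first discovered at some peeling step $n(f)\le N(\ell)$''. This is false for the filled-in uniform peeling: at each negative jump the exploration fills an entire hole with a fresh Boltzmann map, and the faces inside that hole are never individually peeled. Their fpp geodesics to $f_r$ are therefore not determined ``by the exploration so far''; they pass through the swallowed region. So a union bound over the $\ell^K$ peeling steps does not cover all faces, and in the dense regime the swallowed pieces are macroscopic, so this is exactly the hard part. You flag this in your last paragraph, but you do not resolve it.

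\textbf{How the paper circumvents this.} Instead of transferring to $\P^{(1)}_\infty$, the paper biases by a uniform random edge and uses the standard unzipping identity to pass to the \emph{targeted} model $\P^{(1)}_\ell$ (a Boltzmann map of perimeter $2$ with a target face of degree $2\ell$). In the targeted peeling one always fills the hole that does not contain the target, so the target is reached by the peeling itself (at the absorption time $\tau_{-\ell}$) and its fpp geodesic to $f_r$ is fully captured by $\sum_{i=0}^{\tau_{-\ell}-1} X_i$ with the $X_i$ conditionally Bernoulli of parameter $\theta_i$. The union bound is then over \emph{edges of the map} (the $W^{(\ell)}_1/W^{(\ell)}=O(\ell^{a-1/2})$ factor), not over peeling steps, and no swallowed face is missed. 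The tail bound itself comes from a neat two-supermartingale argument: one supermartingale (Lemma~\ref{lemme martingale 1}) controls $\exp\bigl(\lambda\sum X_i - C'_{\bf q}\lambda\sum P_\ell(i)^{1-a}\bigr)$ using the conditional bound on $\theta_i$ from the proof of Theorem~\ref{th nombre de faces}; the other (Lemma~\ref{lemme martingale 2}) controls $\exp\bigl(\lambda\sum P_\ell(i)^{1-a}\bigr)/h^\downarrow_\ell(P_\ell(n))$. Cauchy--Schwarz and optional stopping at $\tau_{-\ell}$ then give $\P^{(1)}_\ell(\sum X_i\ge C\log\ell)=O(\ell^{-4})$ for $C$ large. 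Your proposed single-supermartingale upgrade of Theorem~\ref{th nombre de faces} under $\P^{(1)}_\infty$ would instead have to handle the random endpoint and the swallowed-hole issue simultaneously; the targeted model makes both disappear.
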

The main tool for obtaining the above results is a backward exploration of the map, obtained by reversing time in the uniform peeling exploration, which {makes possible} to keep track of the geodesic $\Gamma_n$.

\paragraph{Scaling limit of the tree of geodesics.} If $\mathfrak{m}$ is a planar map, recall that $\T(\mathfrak{m})$ is the random tree of fpp geodesics from the root. Its vertices are the faces of $\mathfrak{m}$ and its edges are the dual edges which appear in the shortest paths for $d^\dagger_\mathrm{fpp}$ from each face to the root $f_r$. See e.g.\@ Figure \ref{fig: carte et arbre geodesiques}. We equip each edge of $\T(\mathfrak{m})$ with the random fpp length of the associated dual edge of $\mathfrak{m}$. We denote by $d_{\T(\mathfrak{m})}$ the corresponding distance on $\T(\mathfrak{m})$. Let $(f_i)_{i\ge 1}$ be the family of the faces of $\mathfrak{m}$ in non-increasing order of degrees.
\begin{theorem}\label{Th limite d echelle de l arbre}
	Assume $\bf q$ is of type $a \in (3/2,5/2)$. Then,
	$$
	\text{Under }\P^{(\ell)}, \qquad \qquad
	\left( \ell^{2-a}d_{\T(\mathfrak{m})} 
	(f_i,f_j)
	\right)_{i,j\ge 1}
	\mathop{\longrightarrow}\limits_{\ell \to \infty}^{(\mathrm{d})}
	\left(\frac{1}{2c_a p_{\bf q}}d_{\T_a}(w_i,w_j)\right)_{i,j\ge 1},
	$$
	for the product topology, where $c_a\coloneqq \pi/\Gamma(a) $ and  $\T_a =(\{w_i; \ i \ge 1\},  d_{\T_a})$ is a random countable metric space.
\end{theorem}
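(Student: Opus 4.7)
The plan is to realize the tree $\T(\mathfrak{m})$ as the output of the \emph{time-reversed uniform peeling} of $\mathfrak{m}$ and to identify the joint scaling limit of its branches with a stochastic coalescing flow driven by $\Upsilon_a^\uparrow$. Attaching i.i.d.\ Exp$(1)$ clocks to the dual edges makes the uniform peeling coincide with the fpp growth around $f_r$: when a new face $f$ is revealed at step $n$, its parent in $\T(\mathfrak{m})$ is the face on the explored side of the peeled edge and the corresponding edge of $\T(\mathfrak{m})$ carries an independent Exp$(1)$ length. Time-reversing this exploration records the fpp geodesics from all faces back to $f_r$; under $\P^{(\ell)}$ its perimeter process, after rescaling space by $\ell^{-1}$ and time by $\ell^{-a}$, converges to the time-reversal of $\Upsilon_a^\uparrow$ killed at $0$, and the faces $(f_i)_{i\ge 1}$ of largest degree correspond, after division of degrees by $\ell$, to the ranked positive jumps of the limit, giving a family of marks $(x_i)_{i\ge 1}$.

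For a single $f_i$, the fpp length of the branch $f_i\leadsto f_r$ in $\T(\mathfrak{m})$ is a sum of Exp$(1)$ variables along the reverse peeling branch, with each contribution weighted by the reciprocal of the current boundary size; this is the same mechanism that drives Theorem \ref{th nombre de faces}, now adapted to the finite-volume setting $\P^{(\ell)}$. Combining a law of large numbers for these sums with the convergence of the rescaled reverse perimeter yields a scaling limit at the correct order $\ell^{a-2}$ for $d^\dagger_{\mathrm{fpp}}(f_i,f_r)$, the limit being an explicit functional of the limiting reversed stable trajectory issued from the mark $x_i$; the constants $c_a=\pi/\Gamma(a)$ and $p_{\bf q}$ enter via the asymptotics \eqref{eq asymptotique W} and the L\'evy measure of $\Upsilon_a^\uparrow$.

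For the joint statement one uses the ultrametric-type identity
$$
d_{\T(\mathfrak{m})}(f_i,f_j) = d^\dagger_{\mathrm{fpp}}(f_i,f_r) + d^\dagger_{\mathrm{fpp}}(f_j,f_r) - 2\,d^\dagger_{\mathrm{fpp}}(f_{i\wedge j},f_r),
$$
where $f_{i\wedge j}$ is the common ancestor of $f_i$ and $f_j$ in $\T(\mathfrak{m})$, i.e.\ the first face at which their backward branches merge. I would identify, jointly, the pair of rescaled branches with two trajectories of the stochastic coalescing flow of jump diffusions emitted from $x_i$ and $x_j$, whose coalescence time governs $f_{i\wedge j}$. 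Extending the argument to any finite subfamily $\{i_1,\dots,i_k\}$ gives a consistent system of $k$ trajectories and defines, in the limit, the countable metric space $\T_a=(\{w_i\},d_{\T_a})$ as the image of the coalescing flow; convergence for the product topology then follows from the finite-dimensional convergence.

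The main obstacle is this joint step: proving tightness and identifying $k$ backward branches of the uniform peeling as $k$ trajectories of a single coalescing flow. This requires (i)~a Poissonian description of the large positive jumps of the reverse perimeter, fine enough to see the marks $x_i$; (ii)~a careful matching between the discrete merging event (two branches lie on the same boundary edge) and the continuous coalescence in the limit; and (iii)~uniform control of the contribution of the small faces, which is where the condition $a\in(3/2,5/2)$ is used --- at $a=5/2$ no macroscopic face survives and the coalescing-flow description degenerates.
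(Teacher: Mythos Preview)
Your high-level strategy is right---reverse the uniform peeling, encode geodesics as coalescing trajectories, pass to the limit, read off pairwise distances via the four-point formula---and this is exactly what the paper does. But several concrete pieces are wrong or missing in a way that would block the argument.

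First, you have the wrong limiting object. Under $\P^{(\ell)}$ (finite Boltzmann map of large perimeter), the perimeter process does \emph{not} converge to $\Upsilon_a^\uparrow$; that is the scaling limit under $\P_\infty^{(1)}$. Under $\P^{(\ell)}$, the correct limit (equation \eqref{cv perimetre fini}) is the positive self-similar Markov process $X_a^{(1-a)}$, equivalently $e^{\xi}$ after a Lamperti time change, where $\xi$ is the L\'evy process with L\'evy measure $\Lambda$ and drift \eqref{eq drift}. Your time exponent is also off: time is rescaled by $\ell^{1-a}$, not $\ell^{-a}$. The constants $c_a,p_{\bf q}$ enter through this convergence, not through the L\'evy measure of $\Upsilon_a^\uparrow$.

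Second, you are missing the branching structure. The paper does not run a single reversed exploration and then try to follow several branches inside it; instead it uses the \emph{branching} peeling indexed by the Ulam tree $\U$, with an independent perimeter process $P_w$ in each cell (Section~5.2). Each large face $f_w$ sits in some cell, and its geodesic to $f_r$ is a concatenation of trajectories $Y^{(\ell),z,w}$ in the flows of the successive ancestor cells $z\prec w$. Without this decomposition you cannot cleanly formulate the joint convergence, because two faces may live in different cells and their geodesics only interact once both trajectories reach a common ancestor cell.

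Third, you do not address how coalescence is identified in the limit, and this is where the real work lies. The paper constructs an explicit SDE \eqref{eq EDSsauts} driven by a PPP with intensity $dt\,\lambda(dz)\,du$, proves strong existence/uniqueness (Lemma~\ref{lemme solution forte}), and then proves convergence of the discrete flow to this SDE (Theorem~\ref{th limite du coalescent}) via truncation and a coupling argument. Crucially, the mechanism of coalescence in the limit depends on $a$ (Lemma~\ref{lemme coalescence trajectoires}): for $a\in(3/2,2)$ two trajectories merge at a single macroscopic positive jump, so the common ancestor $c(v,w)$ is an element of $\U$ and one shows $c^{(\ell)}(f_w,f_{w'})=f_{c(w,w')}$ for $\ell$ large; for $a\in[2,5/2)$ the merging time is only an accumulation point of positive jumps, $c(v,w)\in\mathcal J_a$, and one needs a two-sided sandwich (an upper bound via a nearby jump $\Delta_{zi_n}$ that forces coalescence, plus a lower bound from non-coalescence before the limit time). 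Your proposal flags ``matching discrete and continuous coalescence'' as a difficulty but gives no mechanism; the dichotomy above is the missing idea.

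Finally, your explanation of why $a<5/2$ is needed is only partly right: the flow convergence (Section~4) actually holds for $a=5/2$ too; what fails there is that no face has macroscopic degree, so the labelling by $(w_i)$ via ranked positive jumps collapses and the statement of the theorem itself has no content in that form.
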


Let us give some insight into this scaling limit. Using the time reversal of the uniform peeling exploration, we describe the geodesics to the root using a family of coalescing processes. We prove that these processes converge towards a stochastic flow of coalescing diffusions with jumps. The metric space $\T_a =(\{w_i; \ i \ge 1\},  d_{\T_a})$ is then defined using this flow of diffusions and we show that it is indeed the scaling limit of $\T(\mathfrak{m})$. The exact definition of the limit is in Section \ref{sous-section definition de la limite}. The stochastic flow is driven by the jumps of the self-similar growth-fragmentation corresponding to the scaling limit of the slicing by heights as shown in \cite{BCK} in the generic case $a=5/2$, \cite{BBCK} in the dilute case $a\in (2,5/2)$ and \cite{Kam23CLE} in the critical case $a =2$ and by independent uniform random variables which arise from the uniform peeling exploration. The set $\{w_i; \ i \ge 1\}$ corresponds to the set of positive jumps of the growth-fragmentation, plus an additional point corresponding to the root face. One can then focus on the trajectories of the flow which ``start from'' the positive jumps and their coalescences give the genealogical structure of $\T_a$. Finally, the distances on $\T_a$ are measured using a Lamperti transform.

\subsection{Perpectives}
\paragraph{Generic case.} {Even though Theorem \ref{Th limite d echelle de l arbre} does not hold in the critical generic case $a=5/2$,} the scaling limit of the coalescing processes also holds in the generic case $a=5/2$. This should give a new description of the geodesics to the root in the Brownian disk as a flow of diffusions with jumps. However, due to the absence of high degree vertices, the scaling limit of the tree of fpp geodesics cannot be stated in the same way as in Theorem \ref{Th limite d echelle de l arbre} in this case so that another approach is needed in the case $a=5/2$. Actually, a related approach is carried out in Section 3 of \cite{MS21} for the axiomatic characterization of the Brownian sphere introducing a ``Lévy net''. {Let us also mention the fact that another construction of the Brownian disk is given in \cite{LG19}, which is different from that of \cite{Bett15,BM17}. One could try to identify the coalescing flow in one of these constructions. Furthermore, let us stress that the tree $\T(\mathfrak{m})$ of geodesics to the root does not correspond to the so-called cactus of the metric space $(\mathfrak{m}^\dagger,d^\dagger_\mathrm{fpp})$ whose analogue for $d_\mathrm{gr}$ is studied in \cite{CLGM13} since the cactus is defined using the geodesic between two points and the point of this geodesic which is closest to the root. The cactus does not describe the coalescence of geodesics to the root. In particular, we believe that the conjectural scaling limit of $\T(\mathfrak{m})$ under $\P^{(\ell)}$ in the case $a=5/2$ is different from the Brownian cactus of \cite{CLGM13}.}

\paragraph{Link with self-similar Markov trees.} Besides, one could define the stochastic flow on top of the self-similar Markov trees from \cite{BCR24} in the case $a \in (2, 5/2]$ but we do not in order to cover the whole range $(3/2,5/2]$ {since the self-similar Markov trees of \cite{BCR24} are required to be compact. Let us also insist on the fact that self-similar Markov trees only describe the so-called exploration tree of the map and do not give the scaling limit of geodesics to the root. Nevertheless, we believe that defining the stochastic flow on top of the self-similar Markov tree would be very relevant in order to define the scaling limit of the tree of geodesics to the root as a random compact measured metric space when $a \in (2,5/2]$.}
{
\paragraph{Critical case.} In the critical case $a=2$, in view of the results in \cite{Kam23CLE}, we expect the tree $\T_2$ to describe the genealogy of the loops in the uniform exploration of the conformal loop ensemble of parameter $\kappa=4$ introduced in \cite{WW13} where the boundary lengths of the loops are measured by an independent critical Liouville quantum gravity and where the distance on $\T_2$ is induced by the Lamperti transform of the quantum distance to the boundary which is denoted by $d_\mathrm{Lamperti}$ in \cite{Kam23CLE}. Indeed, the Lamperti transform of the quantum distance corresponds to the scaling limit of the distances to the root in $3/2$-stable maps as shown in \cite{Kam23CLE}. The quantum distance from the loops to the boundary first appeared in \cite{AHPS21}.
	
\paragraph{Geodesics for the dual graph distance.} The study of the fpp distance can be seen as a first step towards the understanding of the dual graph distance $d^\dagger_\mathrm{gr}$ so that the next step would be to perform the same study of geodesics to the root for the distance $d^\dagger_\mathrm{gr}$. This is a challenging task inasmuch as the study of $d^\dagger_\mathrm{gr}$ is usually far more technical than the study of $d^\dagger_\mathrm{fpp}$. We expect a scaling limit in the cases $a\in [2,5/2]$.
	
\paragraph{Scaling limit of random maps with high degrees?} Last but not least, using the flow of coalescing diffusions with jumps by adding some ``shortcuts'' we introduce in Section \ref{section conjectures} a random countable metric space which we conjecture to be the scaling limit of random Boltzmann maps for $a \in (3/2,5/2)$ when the maps are equipped with the fpp distance and for $a \in [2,5/2)$ when we equip the maps with the dual graph distance.}

{
\subsection{Techniques} 
As mentioned before, the main technique which is introduced and extensively used in the paper is a time reversal of the uniform peeling exploration. More precisely, conditionally on the perimeter process up to time time $n\ge 0$, we progressively construct a map which has the same law as the explored region $\overline{\mathfrak{e}}_n$ by gluing the discovered faces and inserting the holes which are filled in starting from the boundary $\partial \overline{\mathfrak{e}}_n$. At each step, the position where we glue the face or insert the hole is chosen uniformly at random. 

Using the time reversal of the uniform peeling exploration, we show that conditionally on the perimeter process, the number of faces along a fpp geodesic to the root can be written as a sum of independent Bernoulli random variables. The asymptotics of the number of faces along the geodesics is then obtained using precise estimates for the perimeter process which were first introduced in \cite{BCM}. Next, the comparisons between the fpp and the dual graph distances are performed using concentration inequalities which enable {us} to extend the control over the number of faces along one geodesic to all the geodesics from the root to the boundary of the ball. The upper bound {for} the diameter for $d^\dagger_\mathrm{gr}$ in the dense case $a \in (3/2,2)$ is obtained by combining the identification of the number of faces along a fpp geodesic with the techniques introduced in \cite{Kam23}. More precisely, in order to control the diameter, we first use a first moment method, then a re-rooting trick designed in \cite{Kam23} in order to exchange the roles of the root and of a uniform random edge and finally we use some martingales which were introduced in \cite{Kam23} for the same purpose in the case $a=2$. The use of fpp geodesics {is a way} to circumvent the direct study of the dual graph distance via the peeling by layers algorithm performed in Section 5.2 of \cite{BC}.

In order to obtain the scaling limit of the tree of geodesics, we code the time reversal of the uniform exploration using a family of coalescing processes on the torus $\R/\Z$ (which we define in a periodic way on $\R$). These processes are not Markovian since time runs in the opposite direction for the perimeter process $P$. Still, their limit is given by a flow of coalescing pure jump diffusions. This flow is described as a flow of SDEs driven by a Poisson point process. Due to the lack of Lipschitzianity of the coefficients, the existence and uniqueness of strong solutions is not obtained using the classical theorems such as Theorem 9.1 of Chapter IV of \cite{IW89} but it is obtained using the work \cite{LP12} which requires less regularity for the coefficients but asks in return {for} a monotonicity condition. We do not rely on the theory of coalescing flows of \cite{LJR04}. So as to show that the convergence of the coalescing processes associated with the time reversal of the exploration towards the coalescing flow of diffusions with jumps, since time is reversed for the perimeter process, we cannot directly apply classical limit theorems for diffusions with jumps from Chapter IX Section 4 of \cite{JS87}. Instead, building on the uniform random variables involved in the time-reversal of the uniform peeling exploration, we construct a coupling between one trajectory and an approximation where we removed all the jumps of size smaller than $\vp$ for some small $\vp>0$ such that the two trajectories are close to each other uniformly in $\ell\ge 1$. We establish the convergence of this approximation and then let $\vp\to 0$, applying the results of \cite{JS87} in the continuum. Once the convergence towards the coalescing flow is established, the scaling limit of the tree of fpp geodesics to the root boils down to the convergence of the ``times of birth'' of each face and of the coalescence times of the associated trajectories. The convergence of the coalescence times is obtained thanks to the fact that trajectories of the coalescing flow of diffusions with jumps have ``many large jumps'' in the case $a \in (3/2,2)$, so that the coalescence time is a time of a large jump, and ``many small jumps'' in the case $a \in [2,5/2)$, in a way that shortly after coalescing the trajectory has small jumps.
}

\subsection{Plan of the paper}
We define the time reversal of the uniform peeling exploration in Section \ref{section renversement de l'exploration}. Then, Section \ref{section nombre de faces}, independent of the next sections, is devoted to the scaling limit of the number of faces along the geodesics and its applications, i.e.\@ to the proof of Theorem \ref{th nombre de faces}, Corollaries \ref{cor inclusion des boules dilue} and \ref{cor inclusion des boules cas Cauchy} {as well as} Theorem \ref{th diametre cas dense}. Next, in Section \ref{section flot}, we define a coalescing flow of pure jump diffusions and prove that it arises as the scaling limit of a discrete coalescing flow which encodes the fpp geodesics to the root. Based on the coalescing flow, in Section \ref{section arbre geodesiques}, we define a random countable metric space and we prove that it is the scaling limit of the tree of fpp geodesics in Theorem \ref{Th limite d echelle de l arbre}. Finally, in Section \ref{section conjectures} we define a random countable metric space which we believe to be the scaling limit of large random planar maps with high degrees.

\section{The time reversal of the uniform peeling exploration}\label{section renversement de l'exploration}

In this section, we present a time-reversal of the uniform exploration of a random Boltzmann map conditionally on the perimeter process. This reversed uniform exploration is the central tool of this work. But first, we recall the definition of the uniform peeling exploration.

\subsection{The uniform peeling exploration}\label{sous-section explo uniforme}

We say that a map $\mathfrak{e}$ is a map with one hole if $\mathfrak{e}$ has a distinguished face with simple boundary which is different from $f_r$. Moreover, we say that $\mathfrak{e}$ is a submap of a map $\mathfrak{m} $ if one can obtain $\mathfrak{m}$ by gluing a map in the hole of $\mathfrak{e}$. The boundary $\partial \mathfrak{e}$ of a map with one hole is the set of edges on the boundary of its hole. {If $\mathfrak{e}$ is a submap of $\mathfrak{m}$ and $e \in \partial \mathfrak{e}$, then $e$ is adjacent to two faces of $\mathfrak{e}$ and one of them is the hole. The face which is ``behind'' the edge $e$ in $\mathfrak{m}$ is then defined as the face of $\mathfrak{m}$ which is glued to $e$ when we glue a map in the hole of $\mathfrak{e}$ in order to obtain $\mathfrak{m}$  (note that this face may be in $\mathfrak{e}$ or not).} A peeling algorithm $\mathcal{A}$ is a function which takes a map with one hole $\mathfrak{e}$ and returns an edge of $\partial \mathfrak{e}$. The algorithm $\mathcal{A}$ may be random.

Let $\mathfrak{m}$ be a map which is possibly infinite. A peeling exploration of $\mathfrak{m}$ following the peeling algorithm $\mathcal{A}$ is an increasing sequence of submaps $\overline{\mathfrak{e}}_0 \subset \overline{\mathfrak{e}}_1 \subset \cdots \subset \overline{\mathfrak{e}}_n \subset \cdots \subset \mathfrak{m}$ such that $\overline{\mathfrak{e}}_0$ is the map formed by gluing the root face $f_r$ of $\mathfrak{m}$ to a hole of the same degree, and such that for all $n\ge 0$, the submap $\overline{\mathfrak{e}}_{n+1}$ is obtained from $\overline{\mathfrak{e}}_n$ in the following way:
\begin{itemize}
	\item If the face $f$ behind the edge $\mathcal{A}(\overline{\mathfrak{e}}_n)$ in $\mathfrak{m}$ is not in $\overline{\mathfrak{e}}_n$, then we ``discover'' this face by gluing $f$ to $\overline{\mathfrak{e}}_n$ on the edge $\mathcal{A}(\overline{\mathfrak{e}}_n)$. See the two peeling steps on the right of the first row in Figure \ref{fig: explo unif}.
	\item If the face behind $\mathcal{A}(\overline{\mathfrak{e}}_n)$ in $\mathfrak{m}$ is already in  $\overline{\mathfrak{e}}_n$, then it is necessarily adjacent to the hole. Then we glue the edge $\mathcal{A}(\overline{\mathfrak{e}}_n)$ to the edge in $\partial \overline{\mathfrak{e}}_n$ following the adjacency in $\mathfrak{m}$. The hole then splits into two holes. When $\mathfrak{m}$ is infinite, we choose to fill in the hole which contains only a finite number of faces in $\mathfrak{m}$. When $\mathfrak{m}$ is finite, we choose to fill in the hole which has the smallest perimeter (and in case of equality we fill in one of the two holes chosen arbitrarily in a deterministic way). See the left of the first row and the second row in Figure \ref{fig: explo unif} for such peeling steps.
\end{itemize}
We denote by $(P(n))_{n\ge 0}$ the perimeter process which is defined as half of the number of edges in $\partial \overline{\mathfrak{e}}_n$. When $\mathfrak{m}$ is finite, the exploration stops when $P$ reaches zero.

\begin{figure}[h]
	\centering
	\includegraphics[scale=1.1]{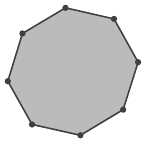}
	\includegraphics[scale=1.4]{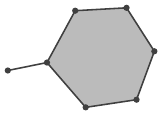}
	\includegraphics[scale=1.3]{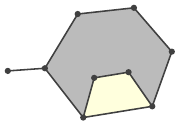}
	\includegraphics[scale=1]{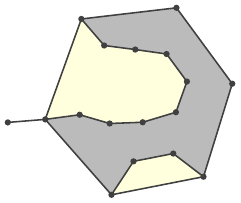}\\
	\includegraphics[scale=1]{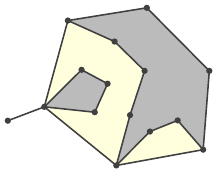}
	\includegraphics[scale=1]{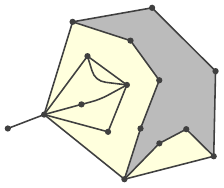}
	\includegraphics[scale=1.1]{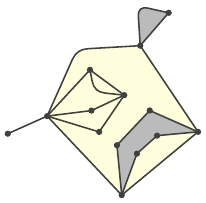}
	\includegraphics[scale=1.1]{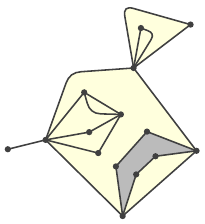}
	\caption{The first five steps of the exploration of the map $\mathfrak{m}$ from Figure \ref{fig: carte et arbre geodesiques}. From left to right on the first row: $\overline{\mathfrak{e}}_0$, $\overline{\mathfrak{e}}_1$, $\overline{\mathfrak{e}}_2$, $\overline{\mathfrak{e}}_3$. From left to right on the second row: the submap obtained after gluing two edges on $\partial \overline{\mathfrak{e}}_3$, $\overline{\mathfrak{e}}_4$ after filling in the hole with minimal perimeter, the submap obtained after gluing two edges on $\partial \overline{\mathfrak{e}}_4$, $\overline{\mathfrak{e}}_5$ after filling in the hole with minimal perimeter.}
	\label{fig: explo unif}
\end{figure}

Now, let us describe a peeling exploration which is very useful in the study of the fpp distance, which is the uniform peeling exploration. See Section 13.1 of \cite{StFlour} for more details. For all $t\ge 0$, let $\mathrm{Ball}^{\mathrm{fpp}}_t (\mathfrak{m})$ be the closed ball centered at $f_r$ of radius $t\ge0$ for the fpp distance $d^\dagger_{\mathrm{fpp}}$. It is defined as the submap whose faces are at fpp distance at most $t$ to the root face and containing only the dual edges whose points are at fpp distance at most $t$ from the root. 

Note that $\overline{\mathfrak{e}}_0 = \mathrm{Ball}^{\mathrm{fpp}}_0$. Next, for some step $n\ge 0$ of the exploration, assume that the submap $\overline{\mathfrak{e}}_n $ can be obtained from $\mathrm{Ball}^{\mathrm{fpp}}_{T_n} (\mathfrak{m})$ for some $T_n \ge 0$ by filling in all the holes except one. By the absence of memory of the exponential distribution, we know that the lengths of the dual edges on the boundary of $\overline{\mathfrak{e}}_n $ are i.i.d. exponential random variables of parameter $1$. We choose the next edge to peel to be the edge of $\partial \overline{\mathfrak{e}}_n$ whose dual edge has the smallest length. By well-known properties of the exponential distribution, this edge is actually chosen uniformly at random and the fpp length of the corresponding dual edge is an exponential random variable of expectation $1/\# \partial \overline{\mathfrak{e}}_n$ conditionally on $\overline{\mathfrak{e}}_n $ and on the fpp lengths of the dual edges of $\overline{\mathfrak{e}}_n $. This edge length can then be written $\mathcal{E}_n/(2P(n))$, w{h}ere $\mathcal{E}_n$ is an exponential r.v.\@ of parameter $1$. Let $T_{n+1} = T_n +\mathcal{E}_n /(2P(n))$. Then $\overline{\mathfrak{e}}_{n+1} $ is obtained from the ball $\mathrm{Ball}^{\mathrm{fpp}}_{T_{n+1}} (\mathfrak{m})$ by filling in all the holes, except one.

{Briefly}, using the uniform peeling algorithm which chooses at each step an edge on the boundary uniformly at random, we have for all $n \ge 0$, 
$$
\overline{\mathfrak{e}}_n = \overline{\mathrm{Ball}}^{\mathrm{fpp}}_{T_n}(\mathfrak{m})
\qquad
\text{with}
\qquad
T_n = \sum_{i=0}^{n-1} \frac{\mathcal{E}_i}{2P(i)},
$$
where the $\mathcal{E}_i$'s for $i\ge 0$ are i.i.d.\@ exponential r.v.s of parameter $1$ and where for all $t$, the submap $\overline{\mathrm{Ball}}^{\mathrm{fpp}}_{T_n}(\mathfrak{m})$ is obtained by filling in all the holes of $\mathfrak{m}$ except one, which corresponds to the infinite hole when $\mathfrak{m}$ is infinite.

\subsection{The law of the peeling exploration of Boltzmann planar maps}\label{sous-section loi de l'explo}
Under $\P^{(\ell)}$ (respectively $\P^{(\ell)}_\infty$) the peeling exploration has several nice features, which do not depend on the peeling algorithm. It is a Markov chain and its transitions can be described using the perimeter process.

First, the peeling exploration satisfies a spatial Markov property, stating that at each step $n\ge 0$, conditionally on $\overline{\mathfrak{e}}_n$, the map which fills the hole in $\mathfrak{m}$ is a random map of law $\P^{(p)}$ (resp.\@ $\P_\infty^{(p)}$), where $p= P(n)$. Moreover, at each time $n$ such that $P(n+1)< P(n)$, the map in the hole which is filled in after gluing the two edges is a random map of law $\P^{(p)}$ with $p = -\Delta P(n)-1$. See Propositions 4.6 and 6.3 of \cite{StFlour} for a detailed statement.

Moreover, the perimeter process itself is a Markov chain, whose transition probabilities are better written using the distribution $\nu$ defined by
\begin{equation}\label{eq def nu}
\forall k \in \Z,
\qquad
\qquad
\nu(k) = 
\left\{
\begin{matrix}
	q_{k+1} c_{\bf q}^k &\text{ if } k \ge 0 \\
	2 W^{(-1-k)} c_{\bf q}^k &\text{ if } k < 0,
\end{matrix}
\right.
\end{equation}
where we recall that $(q_k)_{k\ge 1}$ is the weight sequence, $W^{(\ell)}$ is the partition function of maps of perimeter $2\ell$ for all $\ell \ge 1$, and $c_{\bf q}$ appears in \eqref{eq asymptotique W}. 
It is known that $\nu $ is a probability measure (see e.g.\@ Lemma 5.2 of \cite{StFlour}). Under $\P^{(\ell)}_\infty$, the perimeter process $(P(n))_{n\ge 0}$ is the Doob $h$-transform of the $\nu$-random walk starting at $\ell$, using the harmonic function $h^\uparrow$ on $\Z_{\ge 1}$ defined by
\begin{equation}\label{eq h fleche}
\forall \ell \ge 1, \qquad 
h^\uparrow (\ell) = 2 \ell 2^{-2\ell} \binom{2\ell}{\ell}.
\end{equation}
See e.g.\@ Remark 7.5 of \cite{StFlour}. This Doob $h$-transform can be interpreted as a conditioning of the $\nu$-random {walk} to always stay positive. {Moreover}, under $\P^{(\ell)}$, the perimeter process $(P(n))_{n\ge 0}$ is a Markov chain starting at $\ell$, absorbed at zero, with transition probabilities for all $n\ge 0$, $p \ge 1, k \in \Z$,
\begin{equation}\label{eq probas de transition perimetre fini}
\P^{(\ell)}(P(n+1)= p+k \vert P(n) = p)
= \nu(k) c_{\bf q}^{-k}\frac{ W^{(p+k)}}{W^{(p)}} \left({\bf 1}_{p+k> (p-1)/2} +
\frac{1}{2} {\bf 1}_{p+k= (p-1)/2}
\right).
\end{equation}
See the second display of page 53 of \cite{BBCK} coming from Proposition 6.3  of \cite{BBCK}.

The tail behavior of $\nu$ is well understood for $\bf q$ of type $a$ thanks to Propositions 5.9 and 5.10 of \cite{StFlour}. We have, when $a \in (3/2,5/2)$, as $k\to \infty$,
\begin{equation}\label{queue nu}
\nu(-k) \sim p_{\bf q} k^{-a} \qquad \text{and} \qquad  \nu([k,\infty)) \sim p_{\bf q} \cos (a \pi )\frac{1}{a-1} k^{1-a},
\end{equation}
where the constant $p_{\bf q}$ comes from \eqref{eq asymptotique W}. When $a=5/2$, only the first asymptotic holds and the second one becomes $\nu([k,\infty))=o(k^{-3/2})$.

\subsection{Reversing the uniform exploration}\label{sous-section explo renversee}

Let $\mathfrak{m} $ be a planar map. Let $P$ be the perimeter process associated with the uniform peeling exploration of $\mathfrak{m}$. Let us define the decreasing sequence of submaps $\mathfrak{u}^n_0\supset \mathfrak{u}^n_1 \supset \cdots \supset \mathfrak{u}^n_n$ as follows: let $\mathfrak{u}^n_n$ be the map made of only one distinguished face of degree $2P(n)$ glued to one hole of the same degree. For all $k \in \lb 0,n-1 \rb $, we obtain $\mathfrak{u}^n_k$ from $\mathfrak{u}^n_{k+1}$ by distinguishing two cases:
\begin{itemize}
	\item If $\Delta P(k)  <0$ then $\mathfrak{u}^n_k$ is obtained by inserting $-2 \Delta P(k) $ consecutive edges in the boundary of the distinguished face at a position which is chosen uniformly at random. The two extremal inserted edges are glued and we fill in the hole delimited by the $-2 \Delta P(k) -2 $ remaining edges with a Boltzmann map of law $\P^{(p)}$, where $p\coloneqq -\Delta P(k)-1$. See the first row of Figure \ref{fig: explo unif renverse} and the rightmost figure in the second row for such exploration steps.
	\item If $\Delta P(k)  \ge 0$, then $\mathfrak{u}^n_k$ is obtained by gluing a new face $f$ of degree $2\Delta P(k)+2$ to $2\Delta P(k)+1$ consecutive edges where the position of the first of these consecutive edges is chosen uniformly at random on the boundary of the distinguished face. See the exploration steps $\mathfrak{u}^5_2$ and $\mathfrak{u}^5_1$ in the second row of Figure \ref{fig: explo unif renverse} for this case.
\end{itemize}

\begin{figure}[h]
	\centering
	\includegraphics[width=0.20\textwidth]{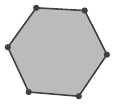}
	\hspace{0.8cm}
	\includegraphics[width=0.22\textwidth]{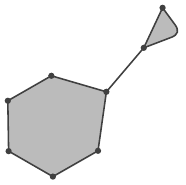}
	\hspace{0.6cm}
	\includegraphics[width=0.22\textwidth]{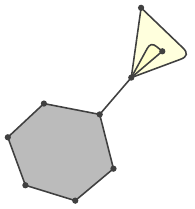}
	\hspace{0.7cm}
	\includegraphics[width=0.15\textwidth]{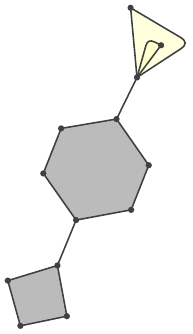}\\
	\includegraphics[width=0.14\textwidth]{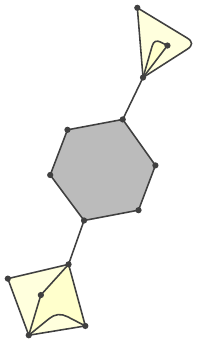}
	\includegraphics[width=0.25\textwidth]{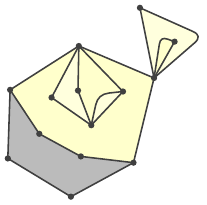}
	\includegraphics[width=0.25\textwidth]{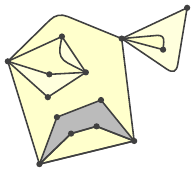}
	\includegraphics[width=0.25\textwidth]{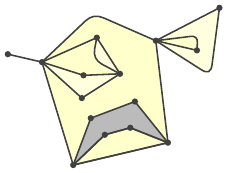}
	\caption{The five steps of the reversed uniform exploration $\mathfrak{u}^5_5 \subset \mathfrak{u}^5_4 \subset \mathfrak{u}^5_3 \subset \mathfrak{u}^5_2 \subset \mathfrak{u}^5_1 \subset \mathfrak{u}^5_0$ of the map $\mathfrak{m}$ starting from time $n=5$. They correspond to the time-reversal of the exploration depicted in Figure \ref{fig: explo unif}. From left to right on the first row: $\mathfrak{u}^5_5$, the map obtained by inserting $4$ edges at a position chosen uniformly at random and gluing the two extremal edges, hence creating a hole of perimeter $2$, the map $\mathfrak{u}^5_4$ obtained by filling in the hole, the map obtained by inserting $6$ edges at a position chosen uniformly at random and gluing the two extremal edges, hence creating a hole of perimeter $4$. From left to right on the second row: the submap $\mathfrak{u}^5_3$ obtained after filling in the hole, $\mathfrak{u}^5_2$, $\mathfrak{u}^5_1$ and $\mathfrak{u}^5_0$.}
	\label{fig: explo unif renverse}
\end{figure}

Conditionally on the perimeter process, let us relate this backward exploration to the uniform peeling exploration $(\overline{\mathfrak{e}}_k)_{k\ge 0}$. For all $n\ge 0$, for all $k \in \lb 0, n \rb$, let $\overline{\mathfrak{e}}_n \setminus \overline{\mathfrak{e}}_k$ be the submap of $\overline{\mathfrak{e}}_n$ obtained by identifying the explored region $\overline{\mathfrak{e}}_k$ at time $k$ to one (non-simple) distinguished face of degree $2P(k)$.

\begin{proposition}\label{lien forward backward}
	For all $n\ge 0$, under either $\P^{(\ell)}_\infty$ or $\P^{(\ell)}( \ \cdot \ \vert P(n)>0)$, conditionally on $(P(k))_{0\le k \le n}$, the sequence $(\mathfrak{u}^n_k)_{0 \le k \le n}$ has the same distribution as $(\overline{\mathfrak{e}}_n \setminus \overline{\mathfrak{e}}_k)_{0\le k \le n}$. In particular, conditionally on $(P(k))_{0\le k \le n}$, the map $\mathfrak{u}^n_0$ has the same law as $\overline{\mathfrak{e}}_n$.
\end{proposition}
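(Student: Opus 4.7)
The plan is to argue by descending induction on $k$, from $k=n$ down to $k=0$. The base case is immediate: both $\mathfrak{u}^n_n$ and $\overline{\mathfrak{e}}_n\setminus\overline{\mathfrak{e}}_n$ are, by construction, a single distinguished face of degree $2P(n)$ glued to a hole of the same degree. For the inductive step, I would compare, conditionally on $(P(j))_{0 \le j \le n}$, the one-step law of $\mathfrak{u}^n_k$ given $\mathfrak{u}^n_{k+1}$ (directly prescribed by the backward construction) with the conditional law of $\overline{\mathfrak{e}}_n\setminus\overline{\mathfrak{e}}_k$ given the contracted tail. By the Markov property of the forward peeling exploration conditional on $(P(j))$, the latter depends only on $\overline{\mathfrak{e}}_n\setminus\overline{\mathfrak{e}}_{k+1}$, so the comparison reduces to matching one-step transitions.

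I would then split the analysis according to the sign of $\Delta P(k)$. In the case $\Delta P(k) \ge 0$, the forward step attaches a new face $f_k$ of degree $2\Delta P(k)+2$ along the uniformly chosen peeled edge $e_k \in \partial\overline{\mathfrak{e}}_k$. After contracting $\overline{\mathfrak{e}}_k$ and $\overline{\mathfrak{e}}_{k+1}$ to single distinguished faces, this becomes the gluing of $f_k$ along $2\Delta P(k)+1$ consecutive edges of the boundary of the distinguished face of $\overline{\mathfrak{e}}_n\setminus\overline{\mathfrak{e}}_{k+1}$, which is exactly the backward transition. In the case $\Delta P(k) < 0$, the forward step identifies $e_k$ with another edge of $\partial\overline{\mathfrak{e}}_k$ and fills the created hole of perimeter $2p$ (with $p = -\Delta P(k)-1$) by an independent Boltzmann map of law $\P^{(p)}$, thanks to the spatial Markov property recalled in Subsection~\ref{sous-section loi de l'explo}. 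Contracted, this amounts to the insertion of $-2\Delta P(k)$ edges on the boundary of the distinguished face, the gluing of their extrema, and an independent Boltzmann filling of the resulting hole, which is again the backward transition.

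The main technical point, and the principal obstacle, is the uniformity of the insertion position on the boundary of the distinguished face of $\overline{\mathfrak{e}}_n\setminus\overline{\mathfrak{e}}_{k+1}$, of cardinality $2P(k+1)$. In the forward, the available uniformity is that of $e_k$ on $\partial\overline{\mathfrak{e}}_k$, which has only $2P(k)$ edges, and these two cardinalities differ, so the matching is not direct. I would resolve this by augmenting both constructions with an auxiliary uniformly chosen root edge on the boundary of the distinguished face at each step: the rooted forward and rooted backward chains then have one-step transitions on supports of matching cardinality, and their agreement follows directly from the uniformity of the peeled edge combined with the Markov property. Erasing the rooting at the end recovers the statement of the proposition.
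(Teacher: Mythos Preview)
Your descending induction on $k$ differs from the paper, which inducts on $n$. The paper's argument is: assuming the result for $n$, one first checks that $\mathfrak{u}^{n+1}_n$ and $\overline{\mathfrak{e}}_{n+1}\setminus\overline{\mathfrak{e}}_n$ have the same conditional law (this is the easy case split on the sign of $\Delta P(n)$, as you also describe). The crucial observation is then that, for every $k\le n$, the map $\overline{\mathfrak{e}}_{n+1}\setminus\overline{\mathfrak{e}}_k$ is obtained by gluing the hole of $\overline{\mathfrak{e}}_n\setminus\overline{\mathfrak{e}}_k$ (of perimeter $2P(n)$) into the distinguished face of $\overline{\mathfrak{e}}_{n+1}\setminus\overline{\mathfrak{e}}_n$ (also of perimeter $2P(n)$), and the rotation of this gluing is uniform precisely because the peeled edge $e_n$ is uniform on $\partial\overline{\mathfrak{e}}_n$. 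The same holds for $\mathfrak{u}^{n+1}_k$ in terms of $\mathfrak{u}^n_k$ and $\mathfrak{u}^{n+1}_n$. The inductive hypothesis gives $(\mathfrak{u}^n_k)_k\stackrel{d}{=}(\overline{\mathfrak{e}}_n\setminus\overline{\mathfrak{e}}_k)_k$, and the conclusion follows.

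The point is that by working at the \emph{outermost} step $n$, the two perimeters being matched are both $2P(n)$, so the uniformity of $e_n$ does the job directly and your obstacle never arises. In your approach you work at the \emph{innermost} step $k$, and the uniformity you need (on $2P(k+1)$ positions) does not come from $e_k$ at all: it comes from the rotation invariance of the later exploration, i.e.\ from the uniformity of $e_{k+1}$ on $\partial\overline{\mathfrak{e}}_{k+1}$. Your diagnosis that ``the available uniformity is that of $e_k$'' misidentifies the source, and your Markov-property sentence is where this is hidden: the claim that the conditional law of $\overline{\mathfrak{e}}_n\setminus\overline{\mathfrak{e}}_k$ given the contracted tail depends only on $\overline{\mathfrak{e}}_n\setminus\overline{\mathfrak{e}}_{k+1}$ is not the forward Markov property and already encodes the uniformity you are after. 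The auxiliary-rooting fix you propose is in the right spirit but is too vague as written; making it precise essentially amounts to redoing the paper's argument from the outside in. Inducting on $n$ is the cleaner route.
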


\begin{proof}
	Let us prove the result by induction on $n$. The result is obvious for $n=0$ since $\overline{\mathfrak{e}}_0$ is the map consisting of the root face $f_r$ and one hole. Assume that the statement holds for $n$. By definition of the uniform peeling exploration, $\overline{\mathfrak{e}}_{n+1}$ is obtained after peeling an edge of $\partial \overline{\mathfrak{e}}_n$ uniformly at random. Let us check that under $\P^{(\ell)}_\infty$ (or $\P^{(\ell)}$ under the event ``$P(n)>0$''), conditionally on $(P(k))_{0\le k \le n}$ and on $(\overline{\mathfrak{e}}_k)_{0\le k \le n}$, the map $\mathfrak{u}^{n+1}_n$ has the same law as $\overline{\mathfrak{e}}_{n+1} \setminus \overline{\mathfrak{e}}_n$ obtained by identifying all the explored region $\overline{\mathfrak{e}}_n$ to one non-simple face. To do this we reason differently according to the sign of $\Delta P(n)$.
	\begin{itemize}
		\item If $\Delta P(n)<0$, then the uniform exploration glues in the hole the peeled edge to another edge of $\partial \overline{\mathfrak{e}}_n$ hence creating a hole of degree $-2\Delta P(n) -2$ and then fills in this hole with a Boltzmann map of law $\P^{(p)}$, where $p\coloneqq -\Delta P(n) -1$.
		\item  If $\Delta P(n) \ge 0$, then the uniform exploration glues in the hole a new face $f$ of degree $2 \Delta P(n) +2$ to the peeled edge.
	\end{itemize}
	In both cases, it is clear that the map $\overline{\mathfrak{e}}_{n+1} \setminus \overline{\mathfrak{e}}_n$ has the same law as $\mathfrak{u}^{n+1}_n$. To conclude, it suffices to notice that the $\mathfrak{u}^{n+1}_k$ for $k \in \lb 0, n \rb$ are obtained by gluing the boundary of the hole of $\mathfrak{u}^n_k$ in the distinguished face of $\mathfrak{u}^{n+1}_n$ in a uniformly chosen way. Similarly, since the peeled edge is chosen uniformly at random, $\overline{\mathfrak{e}}_{n+1} \setminus \overline{\mathfrak{e}}_k$ is obtained by gluing the boundary of the hole of $ \overline{\mathfrak{e}}_n \setminus \overline{\mathfrak{e}}_k$ in the distinguished face of $\overline{\mathfrak{e}}_{n+1} \setminus \overline{\mathfrak{e}}_n$ in a uniformly chosen way.
\end{proof}

{For all $n\ge 0$, let $e_n$ be an edge of $\mathfrak{u}^n_n$ which is chosen independently from $(\mathfrak{u}^n_k)_{0\le k\le n}$ conditionally on $(P(k))_{0\le k \le n}$. }
For all $n\ge 0$, let us denote by $\Gamma_n$ the fpp geodesic from $f_r$ to the already explored face next to {the edge $e_n$ in the explored region $\mathfrak{u}^n_0 = \overline{\mathfrak{e}}_n$.} The geodesic $\Gamma_n$ only contains faces corresponding to the positive jumps of $(P(i))_{0 \le i \le n}$. Moreover, conditionally on $P$, independently for each $i \in \lb 0, n-1\rb$ such that $\Delta P(i)\ge 0$, the face $f_i$ discovered at time $i$ belongs to $\Gamma_n$ with probability
\begin{equation}\label{proba coalescence}
\theta_i\coloneqq \P(f_i \in \Gamma_n \vert P)= \frac{2 \Delta P(i)+1}{2P(i+1)} {\bf 1}_{\Delta P(i) \ge 0}.
\end{equation}
Indeed, this can be checked by induction using Proposition \ref{lien forward backward} and the fact that two adjacent faces are in the same fpp geodesic to the root if and only if they share an edge which has been peeled by the uniform exploration (and in that case there is a shared edge corresponding to the discovery of one of the two faces).
\section{Number of faces in the geodesics}\label{section nombre de faces}

In this section, we prove Theorem \ref{th nombre de faces} which gives the scaling limit of the number of faces along the fpp geodesics to the root for infinite Boltzmann planar maps of law $\P^{(1)}_\infty$. We start by stating some useful results on random walks conditioned to be positive. Then we enter the proof of the theorem. Finally, we prove the applications to the dual graph distance: the comparison between the dual graph distance and the fpp distance and the upper bound on the diameter.
\subsection{Preliminaries on random walks conditioned to stay positive}\label{sous-section preliminaires marche conditionnee}
We first state some useful scaling limit and local limit theorems for the perimeter process $(P(n))_{n\ge 0}$ under $\P^{(1)}_\infty$ which can be viewed as the $\nu$-random walk starting at $1$ and conditioned to {always stay} positive and whose law is described in Subsection \ref{sous-section loi de l'explo}. We will often add a subscript $\infty$ to $P$ in order to underline the fact that we work under $\P^{(1)}_\infty$. Thanks to \eqref{queue nu}, the scaling limit of a $\nu$-random walk starting at zero $(S_n)_{n\ge 0}$ is given by the $(a-1)$-stable Lévy process $\Upsilon_a$  starting at zero with Lévy measure $\cos(a \pi) (dx/x^a) {\bf 1}_{x>0} + (dx/\vert x \vert^a) {\bf 1}_{x<0}$. It is known that $(S_{\lfloor nt \rfloor}/n^{1/(a-1)})_{t\ge 0}$ converges in distribution towards $(\Upsilon_a(p_{\bf q} t))_{t\ge 0}$. See e.g.\@ Proposition 10.1 of \cite{StFlour}. The Lévy process starting at $x>0$ conditioned to {always stay} positive $\mathcal{S}^\uparrow_x$ is defined as the Doob $h$-transform of $x+\Upsilon_a$ using the harmonic function $y \mapsto {\bf 1}_{y\ge 0} \sqrt{y}$. By letting $x\downarrow 0$, one obtains the Lévy process $\Upsilon_a$ starting at zero and conditioned to {always stay} positive which is denoted by $\Upsilon^\uparrow_a$. The scaling limit of the perimeter process is stated in Proposition 10.3 in \cite{StFlour}:
\begin{equation}\label{cv perimetre}
	\text{Under } \P^{(1)}_\infty, 
	\qquad 
	\qquad
	\left( \frac{P_\infty(nt)}{n^{1/(a-1)}} \right)_{t\ge 0}
	\mathop{\longrightarrow}\limits_{n \to \infty}^{(\mathrm{d})}
	\left(
	\Upsilon^\uparrow_a(p_{\bf q} t)
	\right)_{t\ge 0}
\end{equation}
for the Skorokhod $J_1$ topology. We then state a local limit theorem which is proven in the case $a=2$ in \cite{BCM}. The same proof actually works for all $a \in (3/2,5/2]$. Let $f_a^\uparrow$ be the density of $\Upsilon_a^\uparrow(1)$.
\begin{lemma}\label{lemme limite locale}(Lemma 3 of \cite{BCM})
For all $n,x,y \in \N$, let
$$
\eta_n(x,y) = 
\sqrt{\frac{n^{1/(a-1)}}{y}} \left(
(np_{\bf q})^{1/(a-1)} \frac{2\sqrt{y}}{h^\uparrow(y) \sqrt{\pi}} \P^{(x)}_\infty(P_\infty(n)=y)
- f^\uparrow_a\left(\frac{y}{(p_{\bf q} n)^{1/(a-1)}} \right) 
\right).
$$
Then
\begin{itemize}
\item Uniformly in $y\ge 1$, as $n \to \infty$ we have $\eta_n(1,y)\to0$.
\item For every $\vp >0$ and every sequence $\delta_n \to 0$, uniformly in $x \in [1,n^{1/(a-1)}\delta_n]$ and uniformly in $y \in[\vp n^{1/(a-1)} , \vp^{-1} n^{1/(a-1)} ]$, as $n\to \infty$ we have $\eta_n(x,y) \to 0$.
\end{itemize}
\end{lemma}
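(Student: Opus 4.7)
My plan is to reduce the statement to a uniform local limit theorem for the unconditioned $\nu$-random walk killed upon entering $\{0\}$, via the Doob $h$-transform representation of the perimeter process. Since $P_\infty$ is by construction the $h^\uparrow$-transform of the $\nu$-walk $(S_n)_{n\ge 0}$, I have the identity
$$
\P^{(x)}_\infty(P_\infty(n)=y) = \frac{h^\uparrow(y)}{h^\uparrow(x)}\, \P^{(x)}(S_n=y,\ \underline{S}_n\ge 1),
$$
where $\underline{S}_n = \min_{0\le k \le n} S_k$. Applying Stirling's formula to \eqref{eq h fleche} yields $h^\uparrow(\ell) \sim 2\sqrt{\ell/\pi}$ as $\ell \to \infty$, which is exactly why the prefactor $(2\sqrt{y})/(h^\uparrow(y)\sqrt{\pi})$ appears in the definition of $\eta_n$. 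The problem thereby reduces to a uniform asymptotic for $\P^{(x)}(S_n = y, \underline{S}_n\ge 1)$.

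The key input is a uniform local limit theorem for the $\nu$-walk conditioned to stay strictly positive, of the type proved by Caravenna or by Vatutin--Wachtel. The tail estimates \eqref{queue nu} place $S$ in the domain of attraction of the $(a-1)$-stable process $\Upsilon_a$ with normalization $c_n = (p_{\bf q} n)^{1/(a-1)}$, and the associated descending ladder-height subordinator has index $1/2$, giving the renewal estimate $V^-(\ell) \sim c\sqrt{\ell}$. Together these yield, in the bulk regime where $y/c_n$ lies in a compact subset of $(0,\infty)$ and $x/c_n \to 0$, the equivalence $c_n \, \P^{(x)}(S_n = y, \underline{S}_n \ge 1) \sim U^-(x)\, V^-(y)\, f_a^\uparrow(y/c_n)$. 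After inserting $V^-(y) \sim c\sqrt{y}$ together with $U^-(1)$ constant (respectively $U^-(x) \asymp \sqrt{x}$ cancelling against $1/h^\uparrow(x)$), this matches the density $f_a^\uparrow$ appearing in $\eta_n$.

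To establish the two bullets I would split the range of $y$ into three regions. For $x=1$, uniformity over all $y \ge 1$ combines: the bulk regime handled directly by Caravenna's theorem; a small-$y$ regime where both sides vanish like $\sqrt{y/c_n}$ so that the $\sqrt{n^{1/(a-1)}/y}$ prefactor in $\eta_n$ preserves the error; and a large-$y$ regime where the classical unconditional local limit theorem, combined with a Doney-type overshoot estimate, controls the conditioning event. For the second bullet, the assumption $x \le c_n \delta_n$ forces $x/c_n \to 0$, so continuity of the transition density of $\Upsilon_a^\uparrow$ at its left boundary absorbs the $x$-dependence; restricting $y$ to $[\vp c_n, \vp^{-1} c_n]$ places us squarely in the bulk.

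The hard part will be the joint uniformity over $(x,y)$ in all these regimes simultaneously, especially the small-$y$ and large-$y$ tails in the first bullet, where neither the conditioned LLT nor the unconditional LLT applies directly and one has to interpolate through renewal/overshoot estimates. I would follow the strategy of Lemma 3 of \cite{BCM}: that proof is written for $a=2$ but uses only the stable tails \eqref{queue nu} and the renewal asymptotic $V^-(\ell) \sim c\sqrt{\ell}$, both of which hold throughout the range $a \in (3/2, 5/2]$, so the argument transposes essentially without modification.
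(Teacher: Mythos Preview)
Your proposal aligns with the paper's treatment: the paper does not give a proof of this lemma but simply states it as Lemma~3 of \cite{BCM}, remarking that ``the same proof actually works for all $a \in (3/2,5/2]$''. Your sketch is thus more detailed than what the paper provides, but correctly identifies both the mechanism (the $h^\uparrow$-transform reduction to a conditioned-walk local limit theorem of Caravenna/Vatutin--Wachtel type) and the reason the argument extends from $a=2$ to the full range (only the stable tails \eqref{queue nu} and the $\sqrt{\ell}$ renewal asymptotic are used).
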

The above lemma has the following direct corollary:
\begin{corollary}\label{corollaire decorrelation}
For every $\vp >0$ and every sequence $\delta_n \to 0$, we have
$$
n^{1/(a-1)}
\left\vert
\P^{(x)}_\infty(P_\infty(n)=y) - \P^{(x')}_\infty(P_\infty(n)=y)
\right\vert
\mathop{\longrightarrow}\limits_{n\to \infty} 0
$$
uniformly in $x,x' \in [1,n^{1/(a-1)}\delta_n]$ and uniformly in $y \in[\vp n^{1/(a-1)} , \vp^{-1} n^{1/(a-1)} ]$.
\end{corollary}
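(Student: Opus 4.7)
The approach is to invert the definition of $\eta_n$ in Lemma~\ref{lemme limite locale} so as to express $\P^{(x)}_\infty(P_\infty(n) = y)$ as the sum of a main term, namely a rescaled evaluation of the density $f_a^\uparrow$, and a small error proportional to $\eta_n(x,y)$. The crucial observation is that the main term does not depend on the starting point $x$, so when we form the difference with $\P^{(x')}_\infty(P_\infty(n) = y)$ it cancels exactly, and only the $\eta_n$ errors survive. Uniform convergence $\eta_n \to 0$ from the second bullet of Lemma~\ref{lemme limite locale} then yields the claim, once the algebraic prefactor is checked to be $O(1)$.

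Concretely, solving for the probability in the definition of $\eta_n(x,y)$ gives
$$
\P^{(x)}_\infty(P_\infty(n) = y) = \frac{h^\uparrow(y)\sqrt{\pi}}{2\sqrt{y}\,(np_{\bf q})^{1/(a-1)}} \left( f_a^\uparrow\!\left(\frac{y}{(p_{\bf q} n)^{1/(a-1)}}\right) + \sqrt{\frac{y}{n^{1/(a-1)}}}\,\eta_n(x,y)\right),
$$
and analogously for $x'$. Subtracting and multiplying by $n^{1/(a-1)}$ eliminates the $f_a^\uparrow$ term and yields
$$
n^{1/(a-1)}\bigl|\P^{(x)}_\infty(P_\infty(n) = y) - \P^{(x')}_\infty(P_\infty(n) = y)\bigr| = \frac{h^\uparrow(y)\sqrt{\pi}}{2\sqrt{y}\,p_{\bf q}^{1/(a-1)}}\sqrt{\frac{y}{n^{1/(a-1)}}}\bigl|\eta_n(x,y) - \eta_n(x',y)\bigr|.
$$
It remains to bound the prefactor: using the explicit formula \eqref{eq h fleche} together with Stirling, one has $h^\uparrow(y)\sqrt{\pi}/(2\sqrt{y}) \to 1$ as $y \to \infty$, and for $y \in [\varepsilon n^{1/(a-1)}, \varepsilon^{-1} n^{1/(a-1)}]$ the factor $\sqrt{y/n^{1/(a-1)}}$ lies in $[\sqrt{\varepsilon}, 1/\sqrt{\varepsilon}]$. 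Hence the prefactor is bounded uniformly on the range of interest, and applying the second bullet of Lemma~\ref{lemme limite locale} to both $\eta_n(x,y)$ and $\eta_n(x',y)$ (valid since $x,x' \in [1, n^{1/(a-1)}\delta_n]$) concludes the proof.

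There is no real obstacle: the statement is a direct algebraic consequence of the local limit theorem, which already provides the same deterministic limiting density $f_a^\uparrow$ independently of the starting point, up to uniform $o(1)$ errors. The only routine check is the uniform boundedness of the $h^\uparrow$ prefactor, which follows from a standard Stirling estimate.
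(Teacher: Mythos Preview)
Your proof is correct and is exactly the direct derivation from Lemma~\ref{lemme limite locale} that the paper intends (the paper does not spell out a proof, simply calling it a ``direct corollary''). Your inversion of the definition of $\eta_n$, cancellation of the $f_a^\uparrow$ term, and bounding of the prefactor via $h^\uparrow(y)\sim 2\sqrt{y/\pi}$ are precisely the routine steps implicit in the paper's statement.
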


Furthermore, the following lemma is proven in the case $a=2$ but the same proof works for all $a \in (3/2, 5/2]$.
\begin{lemma}\label{lemme uniforme integrabilite}(Lemma 4 of \cite{BCM})
For all $0<r<3/2$, we have
$$
\sup_{n\ge 1} \E^{(1)}_\infty\left(
\left(\frac{n^{1/(a-1)}}{P_\infty(n)}\right)^r
\right)
<\infty.
$$
\end{lemma}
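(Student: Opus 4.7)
The plan is to reduce the claim to a small-ball estimate for $P_\infty(n)$ via the layer-cake formula. Since $P_\infty(n)\ge 1$ almost surely under $\P^{(1)}_\infty$, one has
\[
\E^{(1)}_\infty\left[\left(\frac{n^{1/(a-1)}}{P_\infty(n)}\right)^{\!r}\right]
=\int_0^{n^{1/(a-1)}} r\,t^{r-1}\,\P^{(1)}_\infty\!\left(P_\infty(n)<\frac{n^{1/(a-1)}}{t}\right)dt.
\]
The piece $t\in[0,1]$ contributes at most $1$, so it suffices to establish the uniform tail bound
\begin{equation}\label{eq: petite boule prop}
\P^{(1)}_\infty\bigl(P_\infty(n)\le k\bigr)\le C\left(\frac{k}{n^{1/(a-1)}}\right)^{\!3/2}\quad\text{for all }n\ge 1,\ 1\le k\le n^{1/(a-1)}.
\end{equation}
Indeed \eqref{eq: petite boule prop} gives $\P^{(1)}_\infty(n^{1/(a-1)}/P_\infty(n)>t)\le Ct^{-3/2}$ on $[1,n^{1/(a-1)}]$, and then $\int_1^\infty r\,t^{r-5/2}\,dt$ is finite precisely when $r<3/2$.

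For $n$ large I would prove \eqref{eq: petite boule prop} by combining Lemma \ref{lemme limite locale} with the near-zero asymptotics of $f_a^\uparrow$. Set $\epsilon_n:=\sup_{y\ge 1}|\eta_n(1,y)|$; by the first bullet of Lemma \ref{lemme limite locale}, $\epsilon_n\to 0$. The elementary bound $h^\uparrow(y)\sqrt{\pi}/(2\sqrt{y})\le 1$ for $y\ge 1$ follows from $h^\uparrow(y)=2y\,2^{-2y}\binom{2y}{y}$ together with $\binom{2y}{y}\le 4^y/\sqrt{\pi y}$. Rearranging the definition of $\eta_n(1,y)$ then yields, uniformly in $y\ge 1$,
\[
\P^{(1)}_\infty\bigl(P_\infty(n)=y\bigr)\le\frac{1}{(p_{\bf q}n)^{1/(a-1)}}\left[f_a^\uparrow\!\left(\frac{y}{(p_{\bf q}n)^{1/(a-1)}}\right)+\epsilon_n\sqrt{\frac{y}{n^{1/(a-1)}}}\right].
\]
Because $\Upsilon_a^\uparrow$ is the Doob $h$-transform of $\Upsilon_a$ with harmonic function $y\mapsto \sqrt{y}$, its entrance density at time $1$ satisfies $f_a^\uparrow(x)=O(\sqrt{x})$ as $x\downarrow 0$ (a standard fact from the theory of Lévy processes conditioned to stay positive); combined with the boundedness of $f_a^\uparrow$, this gives $f_a^\uparrow(x)\le C\sqrt{x}$ on any bounded interval. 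For $1\le y\le k\le n^{1/(a-1)}$, the argument of $f_a^\uparrow$ is at most $p_{\bf q}^{-1/(a-1)}$, hence
\[
\P^{(1)}_\infty\bigl(P_\infty(n)=y\bigr)\le\frac{C(1+\epsilon_n)\sqrt{y}}{n^{3/(2(a-1))}}.
\]
Summing over $y\in\{1,\ldots,k\}$ via $\sum_{y=1}^k\sqrt{y}\le Ck^{3/2}$ yields \eqref{eq: petite boule prop}, as soon as $n$ is large enough that $\epsilon_n\le 1$. For the finitely many remaining small $n$, the trivial bound $\E^{(1)}_\infty[(n^{1/(a-1)}/P_\infty(n))^r]\le n^{r/(a-1)}$ (using $P_\infty(n)\ge 1$) is finite for each fixed $n$, which closes the argument.

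The main technical input is the near-zero asymptotic $f_a^\uparrow(x)=O(\sqrt{x})$: the threshold $r<3/2$ in the statement is dictated precisely by this $\sqrt{x}$ factor, which itself reflects the choice of harmonic function $\sqrt{y}$ in the definition of $\Upsilon_a^\uparrow$. A secondary concern is verifying that the remainder contribution $\epsilon_n\sqrt{y/n^{1/(a-1)}}$ in the local limit theorem stays at the same order of magnitude as the main term; this works out because in the relevant range $\sqrt{y/n^{1/(a-1)}}$ has the same $y$-dependence as $f_a^\uparrow\bigl(y/(p_{\bf q}n)^{1/(a-1)}\bigr)$, so the remainder only inflates the constant by a factor $1+\epsilon_n$.
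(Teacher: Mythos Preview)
The paper does not supply its own proof: it cites Lemma~4 of \cite{BCM} and remarks that the same argument extends to all $a\in(3/2,5/2]$. Your proof is correct and follows the natural route---reducing to the small-ball estimate $\P^{(1)}_\infty(P_\infty(n)\le k)\le C\,(k/n^{1/(a-1)})^{3/2}$ via the layer-cake formula, then deriving that estimate from the local limit theorem (Lemma~\ref{lemme limite locale}) combined with the near-zero behaviour $f_a^\uparrow(x)=O(\sqrt{x})$. Since \cite{BCM} states and proves the local limit theorem immediately before this moment bound, your strategy is almost certainly the intended one.

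One minor point: the input $f_a^\uparrow(x)=O(\sqrt{x})$ as $x\downarrow 0$ is indeed known---it follows for instance from Chaumont's description of the entrance law of a L\'evy process conditioned to stay positive, the relevant harmonic function here being $y\mapsto\sqrt{y}$---but in a polished write-up you should replace ``a standard fact'' by a precise reference, as this is the step that pins down the exponent $3/2$.
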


\begin{remark}\label{remarque cv 1 sur P}
By Lemma \ref{lemme uniforme integrabilite} and by (\ref{cv perimetre}), if ${\bf q}$ is of type $a <5/2$, then $(n/P_\infty(n)^{a-1})_{n\ge 0}$ is uniformly integrable and converges in distribution. Hence
$$
\E^{(1)}_\infty \left(
\frac{n}{P_\infty(n)^{a-1}}
\right)
\mathop{\longrightarrow}\limits_{n\to \infty}
\E\left(
\frac{1}{\Upsilon^\uparrow_a(p_{\bf q})^{a-1}}
\right)
=\E\left(\frac{1}{p_{\bf q}\Upsilon^\uparrow_a(1)^{a-1}} \right)
=\frac{2\Gamma(a)}{p_{\bf q} (a-1) \pi^{3/2} \Gamma(a-3/2)}
.
$$
where the last equality comes from the end of the proof of Proposition 2.7 in \cite{Kam23}.
\end{remark}
From the previous lemma, as shown in \cite{BCM}, one can deduce that the perimeter process decorrelates at different scales, in the following sense:
\begin{lemma}\label{Lemme decorrelation}(Lemma 5 of \cite{BCM})
Let $A_n$ be a sequence tending to $\infty$. For all $\vp>0$, for all $i \ge 1$, let $I(i,\vp) =  [\vp i^{1/(a-1)}, \vp^{-1} i^{1/(a-1)}]$ and let
$$
X_i^{(\vp)} \coloneqq \frac{1}{P_\infty(i)^{a-1}} {\bf 1}_{P_\infty(i) \in I(i, \vp)} - \E^{(1)}_\infty \left(\frac{1}{P_\infty(i)^{a-1}} {\bf 1}_{P_\infty(i) \in I(i, \vp)}
\right).
$$
If ${\bf q}$ is of type $a<5/2$, then 
$$
\lim_{\vp \to 0} \limsup_{n\to \infty} \sup_{1\le i \le n} \sup_{A_n i \le j \le n}
\left\vert
\E^{(1)}_\infty
\left(
i X_i^{(\vp)}
j X_j^{(\vp)}
\right)
\right\vert
=0.
$$
\end{lemma}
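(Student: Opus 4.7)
The plan is to condition on $\mathcal{F}_i$ and use Corollary~\ref{corollaire decorrelation} to show that the perimeter process forgets its starting point, so that $X_i^{(\vp)}$ and $X_j^{(\vp)}$ decorrelate. Set $Y_k := P_\infty(k)^{-(a-1)} \mathbf{1}_{P_\infty(k) \in I(k,\vp)}$ and $m_k := \E^{(1)}_\infty[Y_k]$, so $X_k^{(\vp)} = Y_k - m_k$. By the Markov property of the perimeter process at time $i$,
$$
\E^{(1)}_\infty[X_i^{(\vp)} X_j^{(\vp)}]
\;=\; \E^{(1)}_\infty\!\bigl[Y_i\bigl(G_{j-i}(P_\infty(i)) - m_j\bigr)\bigr],
$$
where $G_r(x) := \E^{(x)}_\infty\bigl[P_\infty(r)^{-(a-1)} \mathbf{1}_{P_\infty(r)\in I(j,\vp)}\bigr]$ and $m_j = \E^{(1)}_\infty[G_{j-i}(P_\infty(i))]$. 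Since $Y_i$ is supported on $\{P_\infty(i)\in I(i,\vp)\}$ and $\E^{(1)}_\infty[Y_i] \le C/i$ by Remark~\ref{remarque cv 1 sur P}, it suffices to bound $|G_{j-i}(x) - m_j|$ uniformly for $x \in I(i,\vp)$, with errors of order $o_n(1)/j$ plus an irreducible tail error.

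The main input is Corollary~\ref{corollaire decorrelation}, applied at time $j-i$. The hypothesis $j \ge A_n i$ with $A_n \to \infty$ ensures that every $x \in I(i,\vp)$ satisfies $x \le \vp^{-1} A_n^{-1/(a-1)} (j-i)^{1/(a-1)} = \delta_n (j-i)^{1/(a-1)}$ for some $\delta_n \to 0$; moreover $j-i \sim j$ implies $I(j,\vp) \subset [\tfrac{\vp}{2}(j-i)^{1/(a-1)}, 2\vp^{-1}(j-i)^{1/(a-1)}]$ for large $n$. Corollary~\ref{corollaire decorrelation} then yields a sequence $\eta_n = \eta_n(\vp) \to 0$ such that
$$
(j-i)^{1/(a-1)} \bigl| \P^{(x)}_\infty(P_\infty(j-i)=y) - \P^{(x')}_\infty(P_\infty(j-i)=y) \bigr| \le \eta_n
$$
uniformly for $x,x' \in I(i,\vp)$ and $y \in I(j,\vp)$. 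Summing over the $O(j^{1/(a-1)})$ integers $y \in I(j,\vp)$ weighted by $y^{-(a-1)} = O(1/j)$ yields $|G_{j-i}(x) - G_{j-i}(x')| \le C_\vp \eta_n/j$. Fixing any reference point $x_0 \in I(i,\vp)$ and splitting the expectation in $m_j$ along $\{P_\infty(i) \in I(i,\vp)\}$ and its complement, using the crude bound $G_{j-i} \le \vp^{-(a-1)}/j$ on the latter, produces
$$
ij\,\bigl|\E^{(1)}_\infty[X_i^{(\vp)} X_j^{(\vp)}]\bigr|
\;\le\; C\bigl(C_\vp \eta_n + \vp^{-(a-1)} \P^{(1)}_\infty\bigl(P_\infty(i)\notin I(i,\vp)\bigr)\bigr),
$$
uniformly in $i \in [1,n]$ and $j \in [A_n i, n]$. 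Taking $\limsup_{n\to\infty}$ kills the first term.

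It remains to show that $\vp^{-(a-1)} \sup_{i\ge 1} \P^{(1)}_\infty(P_\infty(i) \notin I(i,\vp)) \to 0$ as $\vp \to 0$. The lower tail is immediate: by Markov's inequality applied to Lemma~\ref{lemme uniforme integrabilite}, $\P^{(1)}_\infty(P_\infty(i) \le \vp i^{1/(a-1)}) \le C_r \vp^r$ uniformly in $i$ for every $r < 3/2$, and since $a-1 < 3/2$ any choice $r \in (a-1, 3/2)$ gives $\vp^{r-(a-1)} \to 0$. The main obstacle is the upper tail, not furnished by Lemma~\ref{lemme uniforme integrabilite} (which only controls negative moments). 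One handles it by combining the local limit theorem (Lemma~\ref{lemme limite locale}) with the polynomial decay of $f_a^\uparrow$ at infinity, together with a sharpening of the crude bound $G_{j-i}(x) \le \vp^{-(a-1)}/j$: when $x$ lies far above the scale $j^{1/(a-1)}$ the transition probability of ever reaching $I(j,\vp)$ in $j-i$ steps is itself small, so the contribution of the upper tail to $m_j - G_{j-i}(x_0)$ is much smaller than what the sup norm suggests. This refined estimation is precisely what is carried out in the proof of Lemma~5 of \cite{BCM} for $a=2$, and the argument extends verbatim to $a \in (3/2, 5/2)$ thanks to the generalization of Lemmas~\ref{lemme limite locale} and~\ref{lemme uniforme integrabilite}.
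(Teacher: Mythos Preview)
Your core idea --- condition at time $i$, use the Markov property, and invoke Corollary~\ref{corollaire decorrelation} to decorrelate --- is correct and is what drives the paper's proof as well. However, your particular decomposition introduces an avoidable obstacle.

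By bounding $G_{j-i}$ crudely on $\{P_\infty(i)\notin I(i,\vp)\}$ you are left needing $\vp^{-(a-1)}\sup_i \P^{(1)}_\infty(P_\infty(i)\notin I(i,\vp))\to 0$. The lower tail is indeed handled by Lemma~\ref{lemme uniforme integrabilite}, but the upper tail requires a quantitative decay rate that you do not establish; you defer it to \cite{BCM}. In fact the proof of Lemma~5 of \cite{BCM} (reproduced in the paper) does \emph{not} carry out any refined estimate of $G_{j-i}(x)$ for $x$ far above scale, so your appeal to the reference does not close the gap.

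The paper instead splits $\E[iX_i^{(\vp)}jX_j^{(\vp)}]$ according to the three indicators $\mathbf{1}_{P_\infty(i)\notin I(i,\vp)}$, $\mathbf{1}_{P_\infty(j)\notin I(j,\vp)}$, and $\mathbf{1}_{P_\infty(i)\in I(i,\vp),\,P_\infty(j)\in I(j,\vp)}$. The key observation is that on $\{P_\infty(i)\notin I(i,\vp)\}$ the variable $X_i^{(\vp)}$ equals the constant $-K_i^{(\vp)}$, so that cross term factors as $(iK_i^{(\vp)})\cdot\bigl|\E[jX_j^{(\vp)}\mathbf{1}_{P_\infty(i)\notin I(i,\vp)}]\bigr|$, which goes to $0$ using only uniform integrability of $(jX_j^{(\vp)})_{j,\vp}$ (from Lemma~\ref{lemme uniforme integrabilite}) and tightness of $(P_\infty(i)/i^{1/(a-1)})_i$ in $(0,\infty)$ --- no quantitative tail rate is needed. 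The remaining term on the intersection is then handled via Corollary~\ref{corollaire decorrelation} essentially as you do. Adopting this decomposition, rather than passing through $G_{j-i}$, would close your argument cleanly.
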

\begin{proof}
The proof follows the same lines as in the case $a=2$ treated in \cite{BCM}, but let us write it for completeness. We write
\begin{align*}
\Big\vert
\E^{(1)}_\infty
\left(
i X_i^{(\vp)}
j X_j^{(\vp)}
\right)
\Big\vert 
\le 
&\left\vert
\E^{(1)}_\infty
\left(
i X_i^{(\vp)}
j X_j^{(\vp)}
{\bf 1}_{P_\infty(i)\not\in I(i, \vp)}
\right)
\right\vert 
+\left\vert
\E^{(1)}_\infty
\left(
i X_i^{(\vp)}
j X_j^{(\vp)}
{\bf 1}_{P_\infty(j)\not\in I(j, \vp)}
\right)
\right\vert
\\
&+
\left\vert
\E^{(1)}_\infty
\left(
i X_i^{(\vp)}
j X_j^{(\vp)}
{\bf 1}_{P_\infty(i)\in I(i, \vp)} {\bf 1}_{P_\infty(j) \in I(j, \vp)}
\right)
\right\vert.
\end{align*}
Let $K_i^{(\vp)}= \E^{(1)}_\infty (\frac{1}{P_\infty(i)^{a-1}} {\bf 1}_{P_\infty(i) \in I(i, \vp)})$. Note that on the event $\{P_\infty(i) \not \in I(i, \vp)\}$, we have $X_i^{(\vp)}= -K_i^{(\vp)}$, so the first term can be written {as}
$$
\left\vert
\E^{(1)}_\infty
\left(
i X_i^{(\vp)}
j X_j^{(\vp)}
{\bf 1}_{P_\infty(i)\not\in I(i, \vp)}
\right)
\right\vert
=
(i K_i^{(\vp)})
\left\vert
\E^{(1)}_\infty
\left(
j X_j^{(\vp)}
{\bf 1}_{P_\infty(i)\not\in I(i, \vp)}
\right)
\right\vert,
$$
which converges to zero as $\vp \to 0$ uniformly in $i,j\ge 1$ since $(i K_i^{(\vp)})_{i\ge 1,\vp >0}$ is {bounded from above} by some constant $C>0$ by Remark \ref{remarque cv 1 sur P}, since $(j X_j^{(\vp)})_{j\ge 1,\vp >0}$ is uniformly integrable by Lemma \ref{lemme uniforme integrabilite} using that $a-1<3/2$ and since $(P_\infty(i)/i^{1/(a-1)})_{i\ge 1}$ is tight by \eqref{cv perimetre}. The second term is handled in the same way. Thus, it only remains to show that
\begin{equation}\label{eq lim limsup sup sup X epsilon}
\lim_{\vp \to 0} \limsup_{n\to \infty} \sup_{1 \le i \le n} \sup_{A_n i \le j \le n}
\left\vert
\E^{(1)}_\infty
\left(
i X_i^{(\vp)}
j X_j^{(\vp)}
{\bf 1}_{P_\infty(j)/j^{1/(a-1)},P_\infty(i)/i^{1/(a-1)} \in [\vp,1/\vp]}
\right)
\right\vert
=0.
\end{equation}
Notice that for $n$ large enough, for all $A_n i \le j \le n$ we have $j-i \ge j/2$, so that $(2(j-i))/A_{j-i} \ge i$. Therefore, by Corollary \ref{corollaire decorrelation} with $\vp, n, \delta_n$ replaced by $\vp/2^{1/(a-1)},j-i,(2/A_{j-i})^{1/(a-1)}/\vp$, we know that for $n$ large enough, for all $A_n i \le j \le n$,
\begin{align*}
&\!\!\!\!\sup_{\substack{1\le x \le i^{1/(a-1)}/\vp\\ \vp j^{1/(a-1)} \le y \le j^{1/(a-1)}/\vp}} \!
\left\vert
\P_\infty^{(x)}(P_\infty(j-i)=y)
-\P_\infty^{(1)}(P_\infty(j-i)=y)
\right\vert
\\
&\le \!\!\!\!\!
\sup_{\substack{1\le x \le (j-i)^{1/(a-1)}(2/A_{j-i})^{1/(a-1)}/\vp 
\\ (\vp/2^{1/(a-1)}) (j-i)^{1/(a-1)} \le y \le (2^{1/(a-1)}/\vp) (j-i)^{1/(a-1)}}}
\!\!
\left\vert
\P_\infty^{(x)}(P_\infty(j-i)=y)
-\P_\infty^{(1)}(P_\infty(j-i)=y)
\right\vert \\
&\le \vp^{a}/j^{1/(a-1)}
\end{align*}
and so by the Markov property
\begin{align*}
{\bf 1}_{P_\infty(i) \le i^{{1}/{(a-1)}}/\vp}
&\left\vert
\E^{(1)}_\infty\left(
\left.
j X_j^{(\vp)}
{\bf 1}_{P_\infty(j) \in I(j,\vp)}
\right\vert P_\infty(i)
\right)-
\E^{(1)}_\infty\left(
\left.
j X_j^{(\vp)}
{\bf 1}_{P_\infty(j) \in I(j,\vp)]}
\right\vert P_\infty(i)=1
\right)
\right\vert \\
&\le 
{\bf 1}_{P_\infty(i) \le i^{{1}/{(a-1)}}/\vp}
\sum_{j^{1/(a-1)} \vp \le y \le j^{1/(a-1)}/\vp}
\left\vert
\frac{j}{y^{a-1}}- jK_j^{(\vp)}
\right\vert
\frac{\vp^{a}}{j^{1/(a-1)}}
\\
&\le
{\bf 1}_{P_\infty(i) \le i^{{1}/{(a-1)}}/\vp}
\left(\frac{1}{\vp^{a-1}} + C \right)
\vp^{a-1}
.
\end{align*}
Moreover, for $n$ large enough, for all $A_n i \le j \le n$,
$$
\left\vert
\E^{(1)}_\infty\left(
\left.
j X_j^{(\vp)}
{\bf 1}_{P_\infty(j) \in [j^{1/(a-1)}\vp,j^{1/(a-1)}/\vp]}
\right\vert P_\infty(i)=1
\right)
\right\vert
\le \E_\infty^{(1)} \left(
\left\vert
j X^{(\vp)}_{j-i}
\right\vert
\right)
\le  4 C.
$$
{Furthermore}, using the fact that $\E^{(1)}_\infty  X_i^{(\vp)}=0$, we have
\begin{align*}
\left|
\E^{(1)}_\infty
\left(
iX^{(\vp)}_i 
{\bf 1}_{P_\infty(i)\in I(i, \vp)}
\right)
\right|
&=
\left\vert
\E^{(1)}_\infty
\left(
iX^{(\vp)}_i 
{\bf 1}_{P_\infty(i)\not\in I(i, \vp)}
\right)
\right\vert \\
&=i K_i^{(\vp)}
\P^{(1)}_\infty
\left(
P_\infty(i) \not\in
I(i, \vp)
\right) \\
&\le C \P^{(1)}_\infty
\left(
\Upsilon^\uparrow_a(p_{\bf q}) \not\in
[\vp , 1/\vp]
\right),
\end{align*}
for some constant $C>0$. Therefore,
\begin{align*}
\Big\vert
\E_\infty^{(1)}
&\left(
iX_i^{(\vp)} j X_j^{(\vp)} 
{\bf 1}_{P_\infty(i)/i^{1/(a-1)}, P_\infty(j)/j^{1/(a-1)} \in [\vp,1/\vp]}
\right)
\Big\vert \\
&\le C \P^{(1)}_\infty
\left(
\Upsilon^\uparrow_a(p_{\bf q}) \not\in
[\vp , 1/\vp]
\right)
(4C + (1/\vp^{a-1}+C)\vp^{a-1}),
\end{align*}
which converges towards zero as $\vp \to 0$.
\end{proof}

\subsection{Number of faces in the geodesics}

We are now well equipped to deal with the proof of Theorem \ref{th nombre de faces}. For sake of readability, we treat separately the dilute case $a \in (2,5/2]$, the critical case $a=2$ and the dense case $a \in (3/2,2)$.

\subsubsection{Dilute case}
\begin{proof}[Proof of Theorem \ref{th nombre de faces} in the dilute case]
For all $i\ge 0$, let $f_i$ be the face of degree $2\Delta P_\infty(i)+2$ which is discovered at time $i$ of the exploration when $\Delta P_\infty(i)\ge 0$. By (\ref{proba coalescence}), we know that conditionally on $P_\infty$, the $X_i \coloneqq {\bf 1}_{f_i \in \Gamma_{\lfloor nt \rfloor}}$'s for $i \in \lb 0, \lfloor nt \rfloor -1 \rfloor \rb $ are independent Bernoulli random variables of parameters $\theta_i$.

Let us start with the case $a \in (2,5/2]$. Notice that it suffices to check that
	\begin{equation}\label{eq cv theta dilue}
		\left(n^{-\frac{a-2}{a-1}}\sum_{i=0}^{\lfloor nt \rfloor} \theta_i \right)_{t\ge 0}
		\mathop{\longrightarrow}\limits_{n \to \infty}^{(\mathrm{d})}
		\left(\frac{{e}_{\bf q}}{2}
		\int_0^t \frac{ds}{\Upsilon_a^\uparrow(p_{\bf q} s)}\right)_{t\ge 0}
	\end{equation}
	for the topology of uniform convergence on compact sets, which is equivalent to the convergence in terms of finite dimensional distributions since the limiting process is a continuous increasing process.
	
	Indeed, if we suppose (\ref{eq cv theta dilue}), then we deduce that for all $t \ge 0$,
	$$
	\E^{(1)}_\infty \left( \left. \left( n^{-\frac{a-2}{a-1}} \sum_{i=0}^{\lfloor nt \rfloor} (X_i-\theta_i) \right)^2 \ \right\vert  P_\infty \right)
	= n^{-2\frac{a-2}{a-1}}\sum_{i=0}^{\lfloor nt \rfloor} (\theta_i-\theta_i^2)
	\le n^{-2\frac{a-2}{a-1}}\sum_{i=0}^{\lfloor nt \rfloor } \theta_i
	\mathop{\longrightarrow}\limits_{n\to \infty}^{(\P)} 0.
	$$
	In order to prove (\ref{eq cv theta dilue}), it suffices to show that
	\begin{equation}\label{cv theta conditionne dilue}
	\left(n^{-\frac{a-2}{a-1}}\sum_{i=0}^{\lfloor nt \rfloor} \E^{(1)}_\infty( \theta_i | P_\infty(i))\right)_{t \ge 0}
		\mathop{\longrightarrow}\limits_{n \to \infty}^{(\mathrm{d})}
		\left(\frac{{e}_{\bf q}}{2}
		\int_0^t \frac{ds}{\Upsilon_a^\uparrow(p_{\bf q} s)}\right)_{t\ge 0}.
	\end{equation}
	Indeed, by a straightforward computation, for all $t\ge 0$, 
	\begin{align*}
	\E^{(1)}_\infty
	\left(
	\left(
	\sum_{i=0}^{\lfloor nt \rfloor}
	\left(
	\theta_i- \E^{(1)}_\infty( \theta_i | P_\infty(i))
	\right)
	\right)^2
	\right)
	&=
	\sum_{i=0}^{\lfloor nt \rfloor}
	\E^{(1)}_\infty
	\left(
	\left(
	\theta_i- \E^{(1)}_\infty( \theta_i | P_\infty(i))
	\right)^2
	\right)
	\le 
	\sum_{i=0}^{\lfloor nt \rfloor}
	\E^{(1)}_\infty
	\left(
	\theta_i^2
	\right),
	\end{align*}
	and we have $\E^{(1)}_\infty
	\left(
	\theta_i^2
	\right) = O(i^{-1/(a-1)})$ due to the fact that, thanks to \eqref{queue nu} and since $h^\uparrow(p)\sim 2 \sqrt{p/\pi}$ as $p \to \infty$, 
	\begin{equation}\label{majoration theta carre}
	\E^{(1)}_\infty (\theta_i^2| P_\infty(i)=p) = \sum_{k\ge0} \left(\frac{2k+1}{2(p+k)}\right)^2 \frac{h^\uparrow(p+k)}{h^\uparrow(p)}\nu(k) = O \left(\frac{1}{\sqrt{p}} \sum_{k\ge 0} \frac{k^{2-a}}{(p+k)^{3/2}}  \right) 
	= O\left(\frac{1}{p^{a-1}}\right)
	\end{equation}
	and to the fact that $a-1>1$ and $\E^{(1)}_\infty(1/P_\infty(i)) = O(i^{-1/(a-1)})$ by Equation (18) of \cite{BC}.
	As a result, 
	\begin{equation}\label{eq theta proche de esp conditionnelle}
	n^{-2\frac{a-2}{a-1}}\E^{(1)}_\infty
	\left(
	\left(
	\sum_{i=0}^{\lfloor nt \rfloor}
	\left(
	\theta_i- \E^{(1)}_\infty( \theta_i | P_\infty(i))
	\right)
	\right)^2
	\right)
	=O\left(
	n^{-\frac{a-2}{a-1}}
	\right)
	\mathop{\longrightarrow}\limits_{n \to \infty} 0,
	\end{equation}
	so that (\ref{cv theta conditionne dilue}) implies (\ref{eq cv theta dilue}).
	
	Now, since $h^\uparrow(p)\sim 2 \sqrt{p/\pi}$ as $p \to \infty$ and since ${p}{h^\uparrow(p+k)}/(h^\uparrow(p)({p+k}))$ is {bounded from above} by a constant, we have
	\begin{equation}\label{eq esp conditionnelle proche de un sur P}
	p\E^{(1)}_\infty(\theta_i \vert P_\infty(i)=p ) = \sum_{k\ge0} \frac{2k+1}{2} \nu(k) \frac{p}{p+k} \frac{h^\uparrow(p+k)}{h^\uparrow(p)}
	\mathop{\longrightarrow}\limits_{p \to \infty}
	\frac{e_{\bf q}}{2}.
	\end{equation}
	Therefore, by \eqref{cv perimetre}, to show (\ref{cv theta conditionne dilue}) it suffices to see that
	$$
	n^{-\frac{a-2}{a-1}} \sum_{i=0}^{\lfloor nt \rfloor} \frac{1}{P_\infty(i)}
	\mathop{\longrightarrow}\limits_{n\to \infty}^{(\mathrm{d})}
	\int_0^t \frac{1}{\Upsilon_a^\uparrow(p_{\bf q} s) }ds,
	$$
	which is a consequence of the end of the proof of Proposition 4.1 in \cite{BC}.
\end{proof}
\subsubsection{Critical case}
\begin{proof}[Proof of Theorem \ref{th nombre de faces} in the case $a=2$]
By the same reasoning as in the dilute case, it is enough to prove that
\begin{equation}\label{eq cv theta critique}
\frac{1}{(\log n)^2}\sum_{i=1}^n \theta_i
\mathop{\longrightarrow}\limits_{n\to \infty}^{(\mathrm{L}^1)}
\frac{1}{\pi^2}.
\end{equation}
Let us first show that
\begin{equation}\label{eq cv theta critique condition}
\frac{1}{(\log n)^2}\sum_{i=1}^n \E^{(1)}_\infty(\theta_i \vert P_\infty(i))
\mathop{\longrightarrow}\limits_{n\to \infty}^{(\mathrm{L}^1)}
\frac{1}{\pi^2}.
\end{equation}
For all $i \ge 1$, we have
\begin{equation}\label{eq expression theta condition}
\E^{(1)}_\infty(\theta_i \vert P_\infty(i)) = \sum_{k=0}^\infty \frac{2k+1}{2(P_\infty(i)+k)} \frac{h^\uparrow(P_\infty(i)+k)}{h^\uparrow(P_\infty(i))} \nu(k).
\end{equation}
So, by (\ref{queue nu}) and the fact that $h^\uparrow(p) \sim 2\sqrt{p/\pi}$, as $p\to \infty$, the expectation $\E^{(1)}_\infty(\theta_i \vert P_\infty(i)=p)$ is {bounded from above} by
$$
O\left(\frac{1}{p}\right)
+O\left(\sum_{k=1}^{\infty} \frac{1}{k\sqrt{p(p+k)}}\right) 
=O\left(\frac{1}{p}\right)
+O\left(\frac{1}{p}\int_{1/p}^{\infty} \frac{1}{x\sqrt{1+x}} dx\right)
=O\left(\frac{\log p}{p}\right).
$$
Moreover, by the end of the proof of Proposition 5 in \cite{BCM}, we know that the sequence $((i\log P_\infty(i))/(P_\infty(i)\log i))_{i\ge 2}$ is uniformly integrable, so that $((i/\log i)\E^{(1)}_\infty(\theta_i \vert P_\infty(i)))_{i\ge 1}$ is also uniformly integrable.
{Furthermore}, using (\ref{queue nu}), (\ref{cv perimetre}) and the fact that $h^\uparrow(p) \sim 2\sqrt{p/\pi}$ as $p \to \infty$, it is easy to see that $\E^{(1)}_\infty(\theta_i \vert P_\infty(i))$ is equivalent in probability to
$$
\sum_{k=1}^\infty \frac{p_{\bf q}}{k \sqrt{P_\infty(i) (P_\infty(i)+k)}} 
 \mathop{\sim}\limits_{i \to \infty}
\frac{1}{P_\infty(i)} \int_{1/P_\infty(i)}^\infty \frac{p_{\bf q}}{u\sqrt{1+u}} du 
 \mathop{\sim}\limits_{i \to \infty}
\frac{p_{\bf q} \log P_\infty(i)}{P_\infty(i)}
\mathop{\sim}\limits_{i \to \infty}
\frac{p_{\bf q} \log i}{P_\infty(i)}
.
$$

Thus, by \eqref{cv perimetre},
$$
\frac{i \log P_\infty(i)}{P_\infty(i) \log i}-
\frac{i}{\log i}\E^{(1)}_\infty(\theta_i \vert P_\infty(i))
\mathop{\longrightarrow}\limits_{i \to \infty}^{(\mathrm{L}^1)}
0
.
$$
As a consequence, in order to show \eqref{eq cv theta critique condition} it suffices to prove that
$$
\frac{1}{(\log n)^2}\sum_{i=1}^n \frac{p_{\bf q} \log i}{P_\infty(i)}
\mathop{\longrightarrow}\limits_{n\to \infty}^{(\mathrm{L}^1)}
\frac{1}{\pi^2}.
$$
By Remark \ref{remarque cv 1 sur P} (or by Remark 3 of \cite{BCM} for this precise case), we know that
\begin{equation}\label{eq cv un sur P critique}
\E_\infty^{(1)} \frac{i}{P_\infty(i)}
\mathop{\longrightarrow}\limits_{i\to \infty}
\E \frac{1}{\Upsilon^\uparrow_2(p_{\bf q})}
= \E \frac{1}{p_{\bf q} \Upsilon^\uparrow_2(1)}
=\frac{2}{\pi^2 p_{\bf q}}.
\end{equation}
Hence, since $\sum_{i=1}^n (\log i)/i \sim (\log n )^2/2$ as $n \to \infty$, to show \eqref{eq cv theta critique condition} it remains to check that
\begin{equation}\label{eq cv log p sur p}
\frac{1}{(\log n)^2}\sum_{i=1}^n (\log i)\left( \frac{1}{P_\infty(i)}
-\E^{(1)}_\infty \left(\frac{1}{P_\infty(i)}\right)
\right)
\mathop{\longrightarrow}\limits_{n\to \infty}^{(\mathrm{L}^1)}
0.
\end{equation}
We prove \eqref{eq cv log p sur p} using the same techniques as in the proof of Proposition 3 in \cite{BCM}. For all $i\ge 1$ and $\vp>0$, as in \cite{BCM}, we set
\begin{eqnarray*}
X_i^{(\vp)} &\coloneqq &
\frac{1}{P_\infty(i)}{\bf 1}_{\vp \le P_\infty(i)/i \le 1/\vp}- \E^{(1)}_\infty \left( \frac{1}{P_\infty(i)}{\bf 1}_{\vp \le P_\infty(i)/i \le 1/\vp} \right)
\\
Y_i^{(\vp)} &\coloneqq &
\frac{1}{P_\infty(i)}{\bf 1}_{P_\infty(i)/i \not\in [\vp, 1/\vp]}- \E^{(1)}_\infty \left(\frac{1}{P_\infty(i)}{\bf 1}_{P_\infty(i)/i \not\in [\vp, 1/\vp]}\right).
\end{eqnarray*}
Then, to prove \eqref{eq cv log p sur p} it is sufficient to show that
\begin{equation}\label{eq cv X Y epsilon}
\lim_{\vp \to 0}\limsup_{n\to \infty}\E^{(1)}_\infty\left(
\left(
\frac{1}{(\log n)^2}\sum_{i=1}^n (\log i)X^{(\vp)}_i
\right)^2
\right)
=\lim_{\vp \to 0} \limsup_{n\to \infty}
\E^{(1)}_\infty
\left\vert
\frac{1}{(\log n)^2}\sum_{i=1}^n (\log i)Y^{(\vp)}_i
\right\vert
=0
\end{equation}
The second term is zero since
$$
\E^{(1)}_\infty \left\vert i Y^{(\vp)}_i \right\vert 
\le  2 \E^{(1)}_\infty \left(
\frac{i}{P_\infty(i)} {\bf 1}_{P_\infty(i)/i \not\in [\vp, 1/\vp]}
\right)
$$
and since $(i/P_\infty(i))_{i\ge 1}$ is uniformly integrable by Lemma \ref{lemme uniforme integrabilite} and is tight in $\R_+^*$ by \eqref{cv perimetre}. 
For the first term of \eqref{eq cv X Y epsilon}, let $A_n$ be an increasing sequence tending to infinity such that $\log A_n = o(\log n)$. We compute
\begin{align}
\! \!\!\!\E^{(1)}_\infty\left(
\left(
\frac{1}{(\log n)^2}\sum_{i=1}^n (\log i)X^{(\vp)}_i
\right)^2
\right)
\le
&\frac{1}{(\log n )^4} \sum_{i=1}^n (\log i )^2 \E^{(1)}_\infty\left(\left(X^{(\vp)}_i\right)^2\right) \label{premiere ligne X epsilon}\\
&+ \frac{2}{(\log n)^4} \sum_{i=1}^n \sum_{j=i+1}^{A_ni} 
(\log i)(\log j)
\left\vert \E^{(1)}_\infty \left(X^{(\vp)}_i X^{(\vp)}_j \right) \right\vert \label{deuxieme ligne X epsilon} \\
&+\frac{2}{(\log n)^4} \sum_{i=1}^n \sum_{j=A_n i+1}^{n} 
(\log i)(\log j)
\left\vert \E^{(1)}_\infty \left(X^{(\vp)}_i X^{(\vp)}_j \right) \right\vert. 	  \label{troisieme ligne X epsilon}
\end{align}
The quantity on the right of the first line \eqref{premiere ligne X epsilon} is a $O(1/(\log n)^4)$ as $n\to \infty$ since $|X_i^{(\vp)}|\le 2/(\vp i)$ by definition. Using that $\sum_{j=i+1}^{A_n i }( \log j)/j \le C(\log (A_ni)^2 - \log (i)^2) =C( (\log A_n)^2 + 2 (\log A_n)( \log i))$ for some constant $C>0$, the second line \eqref{deuxieme ligne X epsilon} is {bounded from above} by
$$O\!\left(\frac{1}{(\log n)^4}\sum_{i=1}^n \sum_{j=i+1}^{A_ni} (\log i)(\log j)\frac{1}{ij} \right) 
=O\!\left(
\frac{\log A_n}{(\log n)^4} \sum_{i=1}^n \frac{2(\log i)^2 + (\log A_n)(\log i)}{i}
\right)
=
 O\!\left(\frac{\log A_n}{\log n}\right).$$
To deal with the third line \eqref{troisieme ligne X epsilon}, it suffices to combine Lemma \ref{Lemme decorrelation} together with the fact that $\sum_{i=1}^n (\log i)/i = O((\log n)^2)$. This proves the convergence \eqref{eq cv X Y epsilon}, hence \eqref{eq cv log p sur p} and thus the convergence \eqref{eq cv theta critique condition}.

To conclude the case $a=2$ it remains to prove \eqref{eq cv theta critique} which is implied by the following convergence:
\begin{equation}\label{cv theta moins condition}
\frac{1}{(\log n)^2}\sum_{i=1}^n \left( \theta_i -\E^{(1)}_\infty(\theta_i \vert P_\infty(i))\right)
\mathop{\longrightarrow}\limits_{n\to \infty}^{(\mathrm{L}^2)}
0.
\end{equation}
To prove \eqref{cv theta moins condition}, by a straightforward computation, one can see that
\begin{align}
\E^{(1)}_\infty \left(
\left(\sum_{i=1}^n \left(
\theta_i-\E^{(1)}_\infty(\theta_i \vert P_\infty(i))
\right)
\right)^2
\right) &=
\sum_{i=1}^n
\E^{(1)}_\infty \left(
\left(
\theta_i-\E^{(1)}_\infty(\theta_i \vert P_\infty(i))
\right)^2
\right) \notag \\
&=
\sum_{i=1}^n
\left(
\E^{(1)}_\infty \left(
\theta_i^2 \right)
-
\E^{(1)}_\infty\left(
\E^{(1)}_\infty(\theta_i \vert P_\infty(i))^2
\right)
\right)  \notag \\
&\le 
\sum_{i=1}^n
\E^{(1)}_\infty \left(
\theta_i^2 \right) \label{carre martingale theta}
\\
&= O(\log n), \notag
\end{align}
where the last line comes from the fact that, by (\ref{queue nu}) and the fact that $h^\uparrow(p) \sim 2\sqrt{p/\pi}$, as $p\to \infty$,
$$
\E^{(1)}_\infty (\theta_i^2| P_\infty(i)=p)
= \sum_{k\ge 0} \left(\frac{2k+1}{2(p+k)}\right)^2 \frac{h^\uparrow(p+k)}{h^\uparrow(p)} \nu(k)
= O \left( \frac{1}{\sqrt{p}} \sum_{k=1}^\infty \frac{1}{(p+k)^{3/2}} \right)
=O\left(\frac{1}{p} \right),
$$
and from the fact that $\E^{(1)}_\infty (1/P_\infty(i)) = O(1/i)$ as $i\to \infty$ by \eqref{eq cv un sur P critique}.
\end{proof}
\subsubsection{Dense case}

\begin{proof}[Proof of Theorem \ref{th nombre de faces} in the dense case]
For the dense case $a \in (3/2,2)$, the proof is analogous to the case $a=2$. By the same reasoning as in the dilute case, it is enough to prove that
\begin{equation}\label{eq cv theta critique dense}
	\frac{1}{\log n}\sum_{i=1}^n \theta_i
	\mathop{\longrightarrow}\limits_{n\to \infty}^{(\mathrm{L}^1)}
	\frac{2}{\pi \tan((2-a) \pi)}.
\end{equation}
Let us first show that
\begin{equation}\label{eq cv theta dense condition}
	\frac{1}{\log n}\sum_{i=1}^n \E^{(1)}_\infty(\theta_i \vert P_\infty(i))
	\mathop{\longrightarrow}\limits_{n\to \infty}^{(\mathrm{L}^1)}
	\frac{2}{\pi \tan((2-a) \pi)}.
\end{equation}
By splitting the sum in \eqref{eq expression theta condition} before and after $\log i$ and using the fact that $h^\uparrow(p) \sim 2\sqrt{p/\pi}$ as $p \to \infty$, we get that there exists a constant $C>0$ such that for all $i \ge0$ for all $p\ge 1$,
$$
\left\vert
\E^{(1)}_\infty(\theta_i \vert P_\infty(i)=p) -
 \sum_{k=\lfloor \log i \rfloor}^\infty
\frac{2k+1}{2(p+k)} \frac{h^\uparrow(p+k)}{h^\uparrow(p)} \nu(k)
\right\vert
\le  C \frac{(\log i )^2}{p}.
$$
As a result, by Lemma \ref{lemme uniforme integrabilite}, as $i \to \infty$, 
\begin{equation}\label{eq reste esperance conditionelle theta}
\E^{(1)}_\infty
\left\vert
\E^{(1)}_\infty(\theta_i \vert P_\infty(i)) -
\sum_{k=\lfloor \log i \rfloor}^\infty
\frac{2k+1}{2(P_\infty(i)+k)} \frac{h^\uparrow(P_\infty(i)+k)}{h^\uparrow(P_\infty(i))} \nu(k)
\right\vert
\le  C \E^{(1)}_\infty \left( \frac{(\log i )^2}{P_\infty(i)}
\right)
= O\left(
\frac{(\log i )^2}{i^{1/(a-1)}}
\right).
\end{equation}
Moreover, by (\ref{queue nu}), since $h^\uparrow(p) \sim 2\sqrt{p/\pi}$ as $p\to \infty$, since $P_\infty(i)$ goes to $\infty$ as $i \to \infty$ in probability and using a Riemann sum, we have 
\begin{equation}\label{equivalent esperance conditionnelle theta dense}
\sum_{k=\lfloor \log i \rfloor}^\infty
\frac{2k+1}{2(P_\infty(i)+k)} \frac{h^\uparrow(P_\infty(i)+k)}{h^\uparrow(P_\infty(i))} \nu(k)
\mathop{\sim}\limits_{i \to \infty}^{(\P)}
\frac{1}{P_\infty(i)^{a-1}} \int_{0}^\infty \frac{u}{\sqrt{1+u}} \frac{p_{\bf q} \cos(a \pi)}{ u^a} du.
\end{equation}
By (\ref{queue nu}) and since $h^\uparrow(p) \sim 2\sqrt{p/\pi}$ as $p\to \infty$, we can also {bound from above} for all $i,p\ge 1$,
\begin{equation}\label{eq majoration esperance conditionnelle theta dense}
\sum_{k=0}^\infty
\frac{2k+1}{2(P_\infty(i)+k)} \frac{h^\uparrow(P_\infty(i)+k)}{h^\uparrow(P_\infty(i))} \nu(k)
\le 
\frac{C_{\bf q}}{P_\infty(i)^{a-1}} \int_{0}^\infty \frac{u}{\sqrt{1+u}} \frac{p_{\bf q} \cos(a \pi)}{ u^a} du,
\end{equation}
for some constant $C_{\bf q}>0$, so that by applying Lemma \ref{lemme uniforme integrabilite} and \eqref{eq reste esperance conditionelle theta} we obtain that $(i \E^{(1)}_\infty(\theta_i \vert P_\infty(i)))_{i \ge 1}$ is uniformly integrable. 
Thus, by \eqref{equivalent esperance conditionnelle theta dense} and \eqref{cv perimetre}, we deduce that
\begin{equation}\label{eq equivalent L1 esperance conditionnelle theta dense}
i \left( \E^{(1)}_\infty(\theta_i \vert P_\infty(i)) -  \frac{1}{P_\infty(i)^{a-1}} p_{\bf q} \cos (a\pi)  \int_0^\infty \frac{1}{u^{a-1} \sqrt{1+ u}} du \right)
\mathop{\longrightarrow}\limits_{i\to \infty}^{(\mathrm{L}^1)} 0.
\end{equation}
Moreover, by the change of variable $t=1/(1+u)$, the relation between the Beta and the Gamma functions and the fact that $\Gamma(1/2)= \sqrt{\pi}$, 
$$
\int_0^\infty \frac{1}{u^{a-1} \sqrt{1+u}} du  = \frac{\Gamma(2-a) \Gamma(a-3/2)}{\sqrt{\pi}}.
$$
By Euler's reflection formula,
$$
\frac{2\Gamma(a)}{(a-1) \pi^{3/2} \Gamma(a-3/2)} 
\frac{\Gamma(2-a) \Gamma(a-3/2)}{\sqrt{\pi}} 
=\frac{-2}{\pi \sin(\pi a)}.
$$
So by Remark \ref{remarque cv 1 sur P} and \eqref{eq equivalent L1 esperance conditionnelle theta dense}, we obtain that
\begin{equation}\label{eq cv esp theta}
i\E^{(1)}_\infty(\theta_i)
\mathop{\sim}\limits_{i \to \infty}
i \E^{(1)}_\infty \left( \frac{1}{P_\infty(i)^{a-1}} p_{\bf q} \cos(a \pi) \int_0^\infty \frac{1}{u^{a-1} \sqrt{1+u}} du\right)
\mathop{\longrightarrow}_{i \to \infty}
\frac{2}{\pi \tan((2-a) \pi)}.
\end{equation}
Hence, by \eqref{eq equivalent L1 esperance conditionnelle theta dense}, to show \eqref{eq cv theta dense condition} it remains to check that
\begin{equation}\label{eq cv 1 sur P a moins moyenne}
	\frac{1}{\log n}\sum_{i=1}^n \left( \frac{1}{P_\infty(i)^{a-1}}
	-\E^{(1)}_\infty \left(\frac{1}{P_\infty(i)^{a-1}}\right)
	\right)
	\mathop{\longrightarrow}\limits_{n\to \infty}^{(\mathrm{L}^1)}
	0,
\end{equation}
which follows from truncations which are similar to the case $a=2$. To prove \eqref{eq cv 1 sur P a moins moyenne}, for all $i\ge 1$ and $\vp>0$, similarly to \cite{BCM}, we set
\begin{eqnarray*}
	X_i^{(\vp)} &\coloneqq &
	\frac{1}{P_\infty(i)^{a-1}}{\bf 1}_{\vp \le P_\infty(i)^{a-1}/i \le 1/\vp}- \E^{(1)}_\infty \left( \frac{1}{P_\infty(i)^{a-1}}{\bf 1}_{\vp \le P_\infty(i)^{a-1}/i \le 1/\vp} \right)
	\\
	Y_i^{(\vp)} &\coloneqq &
	\frac{1}{P_\infty(i)^{a-1}}{\bf 1}_{P_\infty(i)^{a-1}/i \not\in [\vp, 1/\vp]}- \E^{(1)}_\infty \left(\frac{1}{P_\infty(i)^{a-1}}{\bf 1}_{P_\infty(i)^{a-1}/i \not\in [\vp, 1/\vp]}\right).
\end{eqnarray*}
Then, to prove \eqref{eq cv 1 sur P a moins moyenne} it is sufficient to show that
\begin{equation}\label{eq cv X Y epsilon dense}
	\lim_{\vp \to 0}\limsup_{n\to \infty}\E^{(1)}_\infty\left(
	\left(
	\frac{1}{\log n}\sum_{i=1}^n X^{(\vp)}_i
	\right)^2
	\right)
	=\lim_{\vp \to 0} \limsup_{n\to \infty}
	\E^{(1)}_\infty
	\left\vert
	\frac{1}{\log n}\sum_{i=1}^n Y^{(\vp)}_i
	\right\vert
	=0
\end{equation}
The second term of \eqref{eq cv X Y epsilon dense} is zero since
$$
	\E^{(1)}_\infty \left\vert i Y^{(\vp)}_i \right\vert 
	\le  2 \E^{(1)}_\infty \left(
	\frac{i}{P_\infty(i)^{a-1}} {\bf 1}_{P_\infty(i)^{a-1}/i \not\in [\vp, 1/\vp]}
	 \right)
$$
and since $(i/P_\infty(i)^{a-1})_{i\ge 1}$ is uniformly integrable by Lemma \ref{lemme uniforme integrabilite} and is tight by \eqref{cv perimetre}. 
For the first term of \eqref{eq cv X Y epsilon dense}, let $A_n$ be an increasing sequence tending to infinity such that $\log A_n = o(\log n)$. One bounds from above
\begin{align}
	\E^{(1)}_\infty\left(
	\left(
	\frac{1}{\log n}\sum_{i=1}^n X^{(\vp)}_i
	\right)^2
	\right)
	\le
	&\frac{1}{(\log n )^2} \sum_{i=1}^n  \E^{(1)}_\infty\left(\left(X^{(\vp)}_i\right)^2\right) \label{premiere ligne X epsilon dense}\\
	&+ \frac{2}{(\log n)^2} \sum_{i=1}^n \sum_{j=i+1}^{A_ni} 
	\left\vert \E^{(1)}_\infty \left(X^{(\vp)}_i X^{(\vp)}_j \right) \right\vert \label{deuxieme ligne X epsilon dense} \\
	&+\frac{2}{(\log n)^2} \sum_{i=1}^n \sum_{j=A_n i+1}^{n} 
	\left\vert \E^{(1)}_\infty \left(X^{(\vp)}_i X^{(\vp)}_j \right) \right\vert. \label{troisieme ligne X epsilon dense}
\end{align}
Since for all $i\ge 1$ we have $\vert X_i^{(\vp)} \vert\le 2/(i\vp)$, the first term \eqref{premiere ligne X epsilon dense} is a $O(1/(\log n)^2)$, the second line \eqref{deuxieme ligne X epsilon dense} is a $O((\log A_n)/(\log n))$ and the third one is controlled using Lemma \ref{Lemme decorrelation}. This proves \eqref{eq cv X Y epsilon dense}, hence \eqref{eq cv 1 sur P a moins moyenne} and thus \eqref{eq cv theta dense condition}. To end the proof of Theorem \ref{th nombre de faces}, it is enough to check that
\begin{equation}\label{cv theta moins condition dense}
	\frac{1}{\log n}\sum_{i=1}^n \left( \theta_i -\E^{(1)}_\infty(\theta_i \vert P_\infty(i))\right)
	\mathop{\longrightarrow}\limits_{n\to \infty}^{(\mathrm{L}^2)}
	0,
\end{equation}
which implies \eqref{eq cv theta critique dense}. By \eqref{carre martingale theta} and by \eqref{eq cv esp theta}, we know that
$$
\E^{(1)}_\infty \left(
\left(\sum_{i=1}^n \left(
\theta_i-\E^{(1)}_\infty(\theta_i \vert P_\infty(i))
\right)
\right)^2
\right)  \le \sum_{i=1}^n \E^{(1)}_\infty (\theta_i^2) \le  \sum_{i=1}^n \E^{(1)}_\infty \theta_i = O(\log n),
$$
hence \eqref{cv theta moins condition dense}, thus ending the proof.
\end{proof}

\subsection{Application: comparison of fpp and graph distance in the dilute case}
We apply our results on the fpp geodesics to show Corollary \ref{cor inclusion des boules dilue} which states that in the dilute case, fpp balls are included in balls for the graph distance with a suitable radius.

\begin{proof}[Proof of Corollary \ref{cor inclusion des boules dilue}]
Let $\vp \in (0,1)$. We reason conditionally on the perimeter process $P_\infty$. {Recall from Subsection \ref{sous-section explo uniforme} that $(\overline{\mathfrak{e}}_n)_{n\ge 0}$ is the uniform exploration of the map.} Let $e$ be an edge on $\partial\overline{\mathfrak{e}}_n$, then, by \eqref{proba coalescence}, the number of faces in the fpp geodesic from $e$ to the root face $f_r$ in $\mathfrak{m}_\infty$ has the same law as 
$
\sum_{i=0}^{n-1} X_i
$ 
where the $X_i$'s are i.i.d.\@ Bernoulli random variables of parameters $\theta_i$. Thus, by a union bound on $e$,
\begin{align*}
\P^{(1)}_\infty
&\left(
\left.
\overline{\mathfrak{e}}_n \not\subseteq \overline{\mathrm{Ball}}^\dagger_{\lfloor (1+\vp)
\sum_{i=0}^{n-1} \theta_i	
\rfloor}(\mathfrak{m}_\infty)
\right\vert
P_\infty
\right)
\\
&\le 
2P_\infty(n)
\P^{(1)}_\infty
\left(
\left.
\sum_{i=0}^{n-1} X_i
\ge 
(1+\vp)
\sum_{i=0}^{n-1} \theta_i
\right\vert
P_\infty
\right) \\
&\le 4 P_\infty(n) \exp\left(
-c
\vp^2
\sum_{i=0}^{n-1} \theta_i
\right) \qquad \text{(Chernoff)}
\\
&\le 4P_\infty(n) \exp\left(-c\vp^2 \sum_{i=0}^{n-1} \frac{1}{2P_\infty(i)+1} \right)  \\
&\le
4 n^{\frac{1+\vp}{a-1}}
\exp \left(
-\frac{c \vp^2 }{3}
n^{\frac{a-2-\vp}{a-1}}
\right)
\end{align*}
a.s.\@ for all $n$ large enough, where $c$ is some absolute constant from Chernoff's inequality and where the last inequality comes from the fact that for all $\vp>0$, a.s.\@ for $n$ large enough we have 
\begin{equation}\label{eq encadrement p s perimetre}
	n^{\frac{1-\vp}{a-1}} \le P_\infty(n) \le n^{\frac{1+\vp}{a-1}}
\end{equation} 
by Lemma 10.9 of \cite{StFlour}. 

By summing over $n$, by Borel-Cantelli's lemma, we deduce that almost surely, for all $\vp>0$, for all $n$ large enough, we have 
$$
\overline{\mathfrak{e}}_n \subseteq \overline{\mathrm{Ball}}^\dagger_{\lfloor (1+\vp)
\sum_{i=0}^{n-1} \theta_i	
\rfloor}(\mathfrak{m}_\infty).
$$
Next, let us prove that almost surely,
\begin{equation}\label{eq equivalents ps somme des theta}
\sum_{i=0}^{n-1} \theta_i \mathop{\sim}\limits_{n \to \infty}
\frac{e_{\bf q}}{2}
\sum_{i=0}^{n-1} \frac{1}{P_\infty(i)}
\mathop{\sim}\limits_{n \to \infty}
e_{\bf q}
\sum_{i=0}^{n-1} \frac{\mathcal{E}_i}{2P_\infty(i)}.
\end{equation}
Let $\vp>0$. For the first equivalent of \eqref{eq equivalents ps somme des theta}, one can see that we have uniformly in $i \ge 0$, for all $p \ge 1$,
\begin{align*}
\E^{(1)}_\infty \left( \left.
\exp
\left(
\theta_i - \E^{(1)}_\infty (\theta_i \vert P_\infty(i))
\right)
\right\vert
P_\infty(i) =p
\right)
&\le 1 + 
C\E^{(1)}_\infty \left( \left.
\left(
\theta_i - \E^{(1)}_\infty (\theta_i \vert P_\infty(i))
\right)^2
\right\vert
P_\infty(i) 
=p
 \right)\\
 &\le 1+ C'\frac{1}{p^{a-1}}
\end{align*}
for some constants $C,C'>0$, where the first inequality comes from the fact that $\theta_i \in [0,1]$ and the second one stems from \eqref{majoration theta carre}. As a result, 
$$
\E^{(1)}_\infty\left(
\exp \left(
\sum_{i=0}^{n-1} \left(
\theta_i - \E^{(1)}_\infty(
\theta_i \vert P_\infty(i))
-C' \frac{1}{P_\infty(i)^{a-1}}
\right)
\right)
\right) \le 1
$$
Thus, for all $\delta>0$, we have
$$
\P^{(1)}_\infty
\left(
\sum_{i=0}^{n-1} \left(
\theta_i - \E^{(1)}_\infty(
\theta_i \vert P_\infty(i))
-C' \frac{1}{P_\infty(i)^{a-1}}
\right)
\ge
\delta n^{\frac{a-2}{a-1}-\vp}
\right)
\le  \exp\left( -\delta n^{\frac{a-2}{a-1}-\vp} \right).
$$
So, taking $\vp<(a-2)/(a-1)$, by Borel-Cantelli's lemma, we have a.s. as $n\to \infty$,
$$
\sum_{i=0}^{n-1} \left(
\theta_i - \E^{(1)}_\infty(
\theta_i \vert P_\infty(i))
-C' \frac{1}{P_\infty(i)^{a-1}}
\right)
\le
o\left(
n^{\frac{a-2}{a-1}-\vp} 
\right).
$$
Then, applying \eqref{eq encadrement p s perimetre}, we deduce that a.s.\@ $\sum_{i=0}^n 1/P_\infty(i)^{a-1}  = O (\sum_{i=1}^n 1/i^{1-\vp})=O(n^{\vp} )$, so
$$
\sum_{i=0}^{n-1} \left(
\theta_i - \E^{(1)}_\infty(
\theta_i \vert P_\infty(i))
\right)
\le
o\left(
n^{\frac{a-2}{a-1}-\vp} 
\right).
$$
By the same reasoning, we also get almost surely as $n \to \infty$
$$
-\sum_{i=0}^{n-1} \left(
\theta_i - \E^{(1)}_\infty(
\theta_i \vert P_\infty(i))
\right)
\le
o\left(
n^{\frac{a-2}{a-1}-\vp} 
\right).
$$
{Moreover}, by \eqref{eq encadrement p s perimetre}, we have a.s.\@ $\sum_{i=0}^\infty 1/P_\infty(i) = \infty$ so that by \eqref{eq esp conditionnelle proche de un sur P}, we deduce that
$$
\sum_{i=0}^{n-1} \E^{(1)}_\infty(\theta_i \vert P_\infty(i)) \mathop{\sim}\limits_{n \to \infty}^{(\mathrm{a.s.})} \sum_{i=0}^{n-1} \frac{e_{\bf q}}{2} \frac{1}{P_\infty(i)},
$$
hence the first asymptotic in \eqref{eq equivalents ps somme des theta} using that $n^{\frac{a-2}{a-1}-\vp} = o(\sum_{i=0}^{n-1} 1/P_\infty(i))$ a.s.\@ by applying \eqref{eq encadrement p s perimetre} once more. For the second asymptotic in \eqref{eq equivalents ps somme des theta}, we apply Bernstein's concentration inequality for exponential random variables (see e.g.\@ Corollary 2.10 of \cite{Ta94}%or Theorem 2.8.1 in \cite{Ve18}
) and get that for $\vp>0$ small enough, a.s.\@ for all $\delta>0$, for $n$ large enough,
\begin{align*}
\P^{(1)}_\infty \left( \left. \left| \sum_{i=0}^{n-1} \frac{\cal{E}_i-1}{P_\infty(i)} \right| > \delta n^{\frac{a-2}{a-1}-\vp}  \right| P_\infty \right) 
&\le
2\exp \left(-C''\min \left(\frac{\delta^2 n^{2\frac{a-2}{a-1}-2\vp} }{\sum_{i=0}^{n-1} {1}/{P_\infty(i)^2}}, \frac{\delta n^{\frac{a-2}{a-1}-\vp} }{\max_{0\le i \le n-1} {1}/{P_\infty(i)}}\right)\right) 
\\
&\le 2\exp \left(-
C''\delta n^{\frac{a-2}{a-1}-\vp} 
\right),
\end{align*}
where $C''>0$ is an absolute constant and where the second inequality comes from \eqref{eq encadrement p s perimetre} and from the fact that $2/(a-1)\ge 4/3>1$ since $a\le 5/2$ so that a.s.\@ $\sum_{i=0}^\infty 1/P_\infty(i)^2 < \infty$, hence the second equivalent of \eqref{eq equivalents ps somme des theta} by Borel-Cantelli's lemma and by \eqref{eq encadrement p s perimetre}.
The almost sure asymptotic \eqref{eq equivalents ps somme des theta} ends the proof since $\overline{\mathfrak{e}}_n$ is the hull of the fpp ball of radius $\sum_{i=0}^{n-1} {\mathcal{E}_i}/{(2P_\infty(i))}$.
\end{proof}

We next turn to the proof of Corollary \ref{cor inclusion des boules cas Cauchy}
\begin{proof}[Proof of Corollary \ref{cor inclusion des boules cas Cauchy}]
Recall that $\overline{\mathfrak{e}}_n$ is the hull of the fpp ball of radius $T_n=\sum_{i=0}^{n-1} {\mathcal{E}_i}/{(2P_\infty(i))}$. The result follows from the same lines as in the beginning of the proof of Corollary \ref{cor inclusion des boules dilue}, using \eqref{eq cv theta critique} and using the fact that 
$T_n/ \log n$ converges in probability towards $1/(p_{\bf q} \pi^2)$ by Proposition 3 of \cite{BCM}.
\end{proof}
\subsection{Second application: the diameter of dense maps is logarithmic for the dual graph distance}
We prove in this subsection Theorem \ref{th diametre cas dense} which states that in the case $a \in (3/2,2)$, the diameter of random maps of law $\P^{(\ell)}$ grows at rate at most $\log \ell$. It is not a consequence of Theorem \ref{th nombre de faces}, but it relies on its proof, more precisely on the inequality \eqref{eq majoration esperance conditionnelle theta dense}. 
The proof relies on the martingales introduced in \cite{Kam23}. Before entering the proof of the theorem, we recall the notion of maps with a target face. A planar map with a target face $\mathfrak{m}_\square$ is a planar map with a distinguished face $\square$ which is not the root face $f_r$. For all $\ell, p \ge 1$, we denote by $\mathcal{M}^{(\ell)}_p$ the set of planar maps of perimeter $2\ell$ with a target face of degree $2p$. The weight $w_{\bf q}(\mathfrak{m}_\square)$ of a map $\mathfrak{m}_\square \in \mathcal{M}^{(\ell)}_p$ is defined by
$$w_{\bf q}(\mathfrak{m}_\square)= \prod_{f \in \mathrm{Faces}(\mathfrak{m}_\square) \setminus \{f_r, \square\}} q_{\deg (f)/2}.$$
The partition function of such maps is given by Equation (3.9) of \cite{StFlour} for all $\ell, p \ge 1$:
$$
W^{(\ell)}_p \coloneqq \sum_{\mathfrak{m}_\square \in \mathcal{M}^{(\ell)}_p} w_{\bf q}(\mathfrak{m}_\square)
=
\frac{1}{2} h^\downarrow_p(\ell) c_{\bf q}^{\ell+p},
$$
where 
\begin{equation}\label{eq h p}
h^\downarrow_p(\ell) = h^\downarrow(\ell) h^\downarrow(p) \frac{\ell}{\ell +p} \qquad \text{with } \qquad 
h^\downarrow(\ell)  = \frac{1}{2^{2\ell}} \binom{2\ell}{\ell}.
\end{equation}
We also set by convention $h^\downarrow_p(\ell) = {\bf 1}_{\ell=-p}$ for all $\ell \le 0$. We denote by $\P^{(\ell)}_p$ the associated Boltzmann probability measure. The peeling exploration of a map with a target face is defined in the same way as for maps without target face except that at each gluing event, we choose to fill in the hole which does not contain the target face. When we discover the target face, the exploration ends and by convention, we say that the perimeter is absorbed at $-p$. The perimeter process under $\P^{(\ell)}_p$, which is denoted by $(P_p(n))_{n\ge 0}$ is then a Doob $h^\downarrow_p$-transform of the $\nu$-random walk starting at $\ell$ which is absorbed when it reaches $-p$ by Proposition 4.7 of \cite{StFlour}. Its law can also be interpreted as a $\nu$-random walk starting at $\ell$ conditioned to stay positive until it jumps and dies at $-p$ by Proposition 5.3 of \cite{StFlour}.

Next, we introduce two supermartingales.
\begin{lemma}\label{lemme martingale 1}
	Suppose ${\bf q}$ is of type $a \in (3/2,2)$. Let $\ell \ge 1$. Under $\P^{(1)}_\ell$, conditionally on $P_\ell$, let $(X_i)_{i\ge 0}$ be a family of independent Bernoulli random variables of parameters $(2\Delta P_\ell(i)+1)/(2(P_\ell(i+1))){\bf 1}_{\Delta P_\ell(i) \ge 0 \text{ and } P_\ell(i) \ge 1}$. Then, there exists a constant $C_{\bf q}'$ such that for all $\lambda \in (0,1)$, the process $(M_n^{(\ell,\lambda)})_{n\ge 0}$ defined by
	$$
	\forall n \ge 0, \qquad
	M_n^{(\ell,\lambda)} =
	\exp\left(\lambda \left(
	\sum_{i=0}^{n-1} X_i
	-C'_{\bf q} \sum_{i=0}^{n-1} \frac{1}{P_\ell(i)^{a-1}}{\bf 1}_{P_\ell(i) \ge 1}
	\right)
	\right)
	$$
	is a supermartingale with respect to the filtration $(\mathcal{G}_n)_{n\ge 0}$ defined by
	$$
	\forall n \ge 0, \qquad
	\mathcal{G}_n =
	\sigma\left(
	(P_\ell(i))_{0\le i \le n}, (X_i)_{0\le i \le n-1}
	\right).
	$$
\end{lemma}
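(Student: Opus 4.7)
The plan is to verify the supermartingale inequality $\E[M_{n+1}^{(\ell,\lambda)} \vert \mathcal{G}_n] \le M_n^{(\ell,\lambda)}$ directly by computing the conditional moment generating function of the one-step increment. Factoring,
$$
\frac{M_{n+1}^{(\ell,\lambda)}}{M_n^{(\ell,\lambda)}} = \exp\!\left(\lambda X_n - \lambda C'_{\bf q} \frac{1}{P_\ell(n)^{a-1}} {\bf 1}_{P_\ell(n) \ge 1}\right),
$$
so on the absorption event $\{P_\ell(n)=-\ell\}$ both terms vanish by the indicators and the ratio is $1$; we may restrict attention to $P_\ell(n)=p\ge 1$. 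Conditionally on the whole trajectory $P_\ell$ the $X_i$ are independent Bernoulli, hence given $\mathcal{G}_n$ and $P_\ell(n+1)$ the variable $X_n$ is Bernoulli of parameter $\theta_n := (2\Delta P_\ell(n)+1)/(2 P_\ell(n+1)){\bf 1}_{\Delta P_\ell(n)\ge 0}$, and by the Markov property
$$
\E\!\left[e^{\lambda X_n}\,\vert\, \mathcal{G}_n\right] = 1 + (e^\lambda - 1)\, \E\!\left[\theta_n \,\vert\, P_\ell(n)=p\right].
$$

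The core of the proof is then a uniform-in-$\ell$ bound of the form $\E^{(1)}_\ell[\theta_n \vert P_\ell(n)=p] \le C''_{\bf q}/p^{a-1}$. Under $\P^{(1)}_\ell$ the perimeter transitions are given by the $h^\downarrow_\ell$-transform of the $\nu$-walk (Proposition 4.7 of \cite{StFlour}), so
$$
\E^{(1)}_\ell[\theta_n \,\vert\, P_\ell(n)=p] = \sum_{k\ge 0} \frac{2k+1}{2(p+k)}\,\nu(k)\, \frac{h^\downarrow_\ell(p+k)}{h^\downarrow_\ell(p)}.
$$
The decisive observation is the pointwise comparison with the $h^\uparrow$-transform: using the identities $h^\downarrow_\ell(x) = h^\downarrow(x) h^\downarrow(\ell)\, x/(x+\ell)$ from \eqref{eq h p} and $h^\uparrow(x) = 2x\, h^\downarrow(x)$, one computes for $k\ge 0$
$$
\frac{h^\downarrow_\ell(p+k)}{h^\downarrow_\ell(p)} = \frac{h^\uparrow(p+k)}{h^\uparrow(p)} \cdot \frac{p+\ell}{p+k+\ell} \le \frac{h^\uparrow(p+k)}{h^\uparrow(p)}.
$$
This $\ell$-free comparison reduces the bound to the one already proved in \eqref{eq majoration esperance conditionnelle theta dense} for the $\nu$-walk conditioned to stay positive, yielding the desired $C''_{\bf q}/p^{a-1}$ with $C''_{\bf q}$ independent of $\ell$.

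To finish, apply $1+x\le e^x$ and the elementary bound $e^\lambda - 1 \le (e-1)\lambda$ on $\lambda\in(0,1)$ to obtain
$$
\E\!\left[e^{\lambda X_n}\,\vert\, \mathcal{G}_n\right] \le \exp\!\left((e^\lambda-1) \frac{C''_{\bf q}}{p^{a-1}}\right) \le \exp\!\left(\lambda \frac{(e-1)C''_{\bf q}}{p^{a-1}}\right),
$$
so setting $C'_{\bf q} := (e-1) C''_{\bf q}$ gives $\E[M_{n+1}^{(\ell,\lambda)}/M_n^{(\ell,\lambda)} \,\vert\, \mathcal{G}_n] \le 1$, as desired. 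The main point where care is needed is ensuring the constant $C'_{\bf q}$ does not depend on $\ell$; this is exactly what the comparison $h^\downarrow_\ell/h^\downarrow_\ell \le h^\uparrow/h^\uparrow$ buys us, allowing the precise estimate \eqref{eq majoration esperance conditionnelle theta dense} from the proof of Theorem \ref{th nombre de faces} in the dense case to be reused verbatim in the finite-target setting.
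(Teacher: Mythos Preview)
Your proof is correct and follows essentially the same approach as the paper: compute $\E[e^{\lambda X_n}\mid\mathcal{G}_n]=1+(e^\lambda-1)\sum_{k\ge0}\frac{2k+1}{2(p+k)}\frac{h^\downarrow_\ell(p+k)}{h^\downarrow_\ell(p)}\nu(k)$, use the identity $h^\downarrow_\ell(p)=h^\downarrow(\ell)h^\uparrow(p)/(2(\ell+p))$ to bound the $h^\downarrow_\ell$-ratio by the $h^\uparrow$-ratio, invoke \eqref{eq majoration esperance conditionnelle theta dense}, and conclude via $1+x\le e^x$. The only cosmetic difference is that the paper absorbs the factor $e^\lambda-1\le(e-1)\lambda$ directly into the constant $C'_{\bf q}$ without writing it out.
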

\begin{proof}
	The process is clearly adapted. Moreover, by \eqref{eq h p} and \eqref{eq h fleche}, for all $p\ge 1$, we have $h^\downarrow_\ell(p) = h^\downarrow(\ell) h^\uparrow(p)/(2(\ell+p))$, so that for all $n\ge 0$, 
	\begin{align*}
		\E^{(1)}_\ell\left(
		\left.
		\exp(\lambda X_n)
		\right\vert
		\mathcal{G}_{n}
		\right)
		&=
		\E^{(1)}_\ell \left(
		\left.
		(e^\lambda-1) \frac{2 \Delta P_\ell(n)+1}{2P_\ell(n+1)} {\bf 1}_{\Delta P_\ell(n)\ge 0 \text{ and } P_\ell(n) \ge 1}
		+1
		\right\vert
		\mathcal{G}_{n}
		\right) \\
		&= 1 + (e^\lambda -1)
		\sum_{k= 0}^\infty
		\frac{2k+1}{2P_\ell(n)+2k} \frac{h^\downarrow_\ell(P_\ell(n)+k)}{h^\downarrow_\ell(P_\ell(n))} \nu(k) {\bf 1}_{P_\ell(n) \ge 1}\\
		%&=1 + (e^\lambda -1)
		%\sum_{k= 0}^\infty
		%\frac{2k+1}{2P_\ell(n)+2k} \frac{\ell+ P_\ell(n)}{\ell+P_\ell(n)+k}\frac{h^\uparrow(P_\ell(n)+k)}{h^\uparrow(P_\ell(n))} \nu(k) {\bf 1}_{P_\ell(n) \ge 1}\\
		& \le 1 + (e^\lambda -1)
		\sum_{k= 0}^\infty
		\frac{2k+1}{2P_\ell(n)+2k} \frac{h^\uparrow(P_\ell(n)+k)}{h^\uparrow(P_\ell(n))} \nu(k) {\bf 1}_{P_\ell(n) \ge 1}\\
		& \le 1 + C'_{\bf q} \frac{\lambda}{P_\ell(n)^{a-1}} {\bf 1}_{P_\ell(n) \ge 1} \qquad \text{by \eqref{eq majoration esperance conditionnelle theta dense}} \\
		& \le \exp \left( C'_{\bf q} \frac{\lambda}{P_\ell(n)^{a-1}}{\bf 1}_{P_\ell(n) \ge 1}\right),
	\end{align*}
where $C'_{\bf q}>0$ is a constant which depends on $\bf q$, hence the desired result.
\end{proof}
We then introduce a second supermartingale, which is closely related to the martingale (6.1) in \cite{Kam23} and corresponds the the supermartingale of Lemma 6.2 in \cite{Kam23} in the case $a=2$.
\begin{lemma}\label{lemme martingale 2}
	Suppose ${\bf q}$ is of type $a \in (3/2,5/2]$. Let $\lambda>0$ such that $\lambda<\min_{k,\ell \ge 1} k^{a-1} (\nu((-\infty,-k])-\nu(-k-\ell))$ (such a $\lambda$ exists thanks to \eqref{queue nu}). Then the process  $(\widetilde{M}_n^{(\ell,\lambda)})_{n\ge 0}$ defined by
	$$
	\forall n\ge 0, \qquad
	\widetilde{M}^{(\ell,\lambda)}_n =
	\frac{1}{h^\downarrow_\ell(P_\ell(n))}
	\exp\left(
	\lambda \sum_{i=0}^{n-1} \frac{1}{P_\ell(i)^{a-1}} {\bf 1}_{P_\ell(i) \ge 1}
	\right)
	$$
	is a supermartingale with respect to the natural filtration $(\mathcal{F}_n)_{n\ge 0}$ associated with the process $P_\ell$. 
\end{lemma}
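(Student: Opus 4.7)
The approach is to verify the one-step supermartingale inequality directly, using the Doob $h^\downarrow_\ell$-transformed transitions of $P_\ell$. Writing the multiplicative increment
\[
\frac{\widetilde M^{(\ell,\lambda)}_{n+1}}{\widetilde M^{(\ell,\lambda)}_{n}}
=\frac{h^\downarrow_\ell(P_\ell(n))}{h^\downarrow_\ell(P_\ell(n+1))}\exp\!\left(\frac{\lambda}{P_\ell(n)^{a-1}}\mathbf 1_{P_\ell(n)\ge 1}\right),
\]
I would split into two cases. If $P_\ell(n)=-\ell$ the process is absorbed, the indicator vanishes, $P_\ell(n+1)=-\ell$, and both factors equal $1$, so the inequality is trivial.

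For $P_\ell(n)=p\ge 1$, by the Doob $h^\downarrow_\ell$-transform the jump $k=P_\ell(n+1)-p$ has conditional probability $\nu(k)h^\downarrow_\ell(p+k)/h^\downarrow_\ell(p)$, supported on those $k$ with $h^\downarrow_\ell(p+k)>0$. In view of the convention $h^\downarrow_\ell(m)=\mathbf 1_{m=-\ell}$ for $m\le 0$, these allowed $k$ are exactly $k\ge 1-p$ together with $k=-p-\ell$; the forbidden $k$'s form $(-\infty,-p]\setminus\{-p-\ell\}$. Telescoping the Radon-Nikodym factor then gives
\[
\E\!\left[\left.\frac{h^\downarrow_\ell(p)}{h^\downarrow_\ell(P_\ell(n+1))}\,\right|\,P_\ell(n)=p\right]
=\!\!\sum_{k\,:\,h^\downarrow_\ell(p+k)>0}\!\!\nu(k)
=1-\bigl(\nu((-\infty,-p])-\nu(-p-\ell)\bigr).
\]
Using $1-x\le e^{-x}$ for $x\in[0,1]$, the desired inequality $\E[\widetilde M^{(\ell,\lambda)}_{n+1}\mid\mathcal F_n]\le\widetilde M^{(\ell,\lambda)}_n$ is implied by
\[
\frac{\lambda}{p^{a-1}}\le \nu((-\infty,-p])-\nu(-p-\ell),
\]
which is precisely the hypothesis on $\lambda$ applied with $k=p$ (the $\ell$ in the min being a dummy).

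The existence of a positive $\lambda$ satisfying the min condition follows from \eqref{queue nu}: as $k\to\infty$, $k^{a-1}\nu((-\infty,-k])\to p_{\bf q}/(a-1)>0$ while $k^{a-1}\nu(-k-\ell)\to 0$ uniformly in $\ell\ge 1$, and for each individual $(k,\ell)$ the quantity $\nu((-\infty,-k])-\nu(-k-\ell)\ge \nu(-k)$ is strictly positive because the set $(-\infty,-k-1]\setminus\{-k-\ell\}$ retains positive $\nu$-mass by the tail asymptotics. A compactness argument over the remaining bounded region of $(k,\ell)$ then yields a strictly positive infimum. The computation is essentially bookkeeping; the only delicate point is correctly tracking the forbidden jumps under the Doob transform, which is what produces the corrective term $+\nu(-p-\ell)$ rather than the naïve $1-\nu((-\infty,-p])$ in the trimmed sum.
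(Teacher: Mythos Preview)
Your proof is correct and follows essentially the same approach as the paper: both compute the conditional expectation of $h^\downarrow_\ell(p)/h^\downarrow_\ell(P_\ell(n+1))$ via the Doob transform (yielding $\nu([-p+1,\infty))+\nu(-p-\ell)$), then compare with the exponential factor using the elementary inequality $1-x\le e^{-x}$, which is equivalent to the paper's form $e^x\le 1/(1-x)$. Your justification that a valid $\lambda>0$ exists is in fact more detailed than what the paper provides.
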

\begin{proof}
The proof is the same as the proof of Lemma 6.2 in \cite{Kam23} and boils down to the fact that for all $n\ge 0$, by definition of $\lambda$ and since $\exp(x) \le 1/(1-x)$ for all $x \in [0,1)$, on the event $\{P_\ell(n) \ge 1\}$, we have
$$
\exp\left( 
\frac{\lambda}{P_\ell(n)^{a-1}}
\right)
\le \frac{1}{1-\lambda/P_\ell(n)^{a-1}}
\le \left( \nu([-P_\ell(n)+1,\infty))+ \nu(-P_\ell(n) -\ell) \right)^{-1}.
$$
It then suffices to use that $P_\ell$ is a Doob $h_\ell^\downarrow$-transform of the $\nu$-random walk.
\end{proof}

\begin{proof}[Proof of Theorem \ref{th diametre cas dense}]
It suffices to prove that there exists a constant $C_{\bf q}>0$ large enough such that if $E$ is a uniform random edge of the map, then
\begin{equation}\label{eq distance arete uniforme}
\P^{(\ell)}\left(
d^\dagger_\mathrm{gr} (f_r, E) \ge C_{\bf q} \log \ell 
\right)
\le \ell^{-2},
\end{equation}
where $d^\dagger_\mathrm{gr} (f_r, E)$ is defined as the smallest dual graph distance from $f_r$ to a face which is adjacent to $E$. Indeed,
\begin{align*}
	\P^{(\ell)} &\left(
	\exists e \in \mathrm{Edges}(\mathfrak{m}), \ d^\dagger_\mathrm{gr} (f_r, e) \ge C_{\bf q} \log \ell 
	\right)\\
	&\le
	\P^{(\ell)}\left(
	\exists e \in \mathrm{Edges}(\mathfrak{m}), \ d^\dagger_\mathrm{gr} (f_r, e) \ge C_{\bf q} \log \ell \text{ and } 
	\# \mathrm{Edges}(\mathfrak{m}) \le \ell^a
	\right)
	+\P^{(\ell)} \left(
	\# \mathrm{Edges}(\mathfrak{m}) \ge \ell^a
	\right) \\
	&\le
	\E^{(\ell)}
	\left(
	\# \{ e \in \mathrm{Edges}(\mathfrak{m}), \ 
	d^\dagger_\mathrm{gr} (f_r, e) \ge C_{\bf q} \log \ell 
	\} {\bf 1}_{\# \mathrm{Edges}(\mathfrak{m}) \le \ell^a}
	\right)
	+\P^{(\ell)} \left(
	\# \mathrm{Edges}(\mathfrak{m}) \ge \ell^a
	\right)
	\\
	&\le \ell^a\P^{(\ell)}\left( d^\dagger_\mathrm{gr} (f_r, E) \ge C_{\bf q} \log \ell  \right) 
	+ 
	\P^{(\ell)} \left(
	\# \mathrm{Edges}(\mathfrak{m}) \ge \ell^a
	\right)
	\mathop{\longrightarrow}\limits_{\ell\to \infty} 0,
\end{align*}
where the convergence comes from \eqref{eq distance arete uniforme} and from the fact that $\# \mathrm{Edges}(\mathfrak{m})/\ell^{a-1/2}$ converges in distribution under $\P^{(\ell)}$ as $\ell \to \infty$ by Proposition 10.4 of \cite{StFlour}.
In order to prove \eqref{eq distance arete uniforme}, one can see that by unzipping the uniform random edge, using Equation (3.4) of \cite{Kam23}, we have
\begin{align*}
\P^{(\ell)}\left(
d^\dagger_\mathrm{gr} (f_r, E) \ge C_{\bf q} \log \ell 
\right)
&= \frac{W_1^{(\ell)}}{W^{(\ell)}} 
\E^{(\ell)}_1 \left(
\frac{1}{\# \mathrm{Edges}(\mathfrak{m}_\square)-1} 
	{\bf 1}_{d^\dagger_\mathrm{gr} (f_r, \square) \ge C_{\bf q} \log \ell }
\right) \\
&\le \frac{W_1^{(\ell)}}{W^{(\ell)}} 
\P^{(\ell)}_1
\left(
d^\dagger_\mathrm{gr} (f_r, \square) \ge C_{\bf q} \log \ell
\right) \\
&= O(\ell^{a-1/2})
\P^{(1)}_\ell
\left(
d^\dagger_\mathrm{gr} (f_r, \square) \ge C_{\bf q} \log \ell
\right) ,
\end{align*}
where in the last line, we used that $W^{(\ell)}_1/W^{(\ell)} =\E^{(\ell)} \# \mathrm{Edges}(\mathfrak{m})=  O(\ell^{a-1/2})$ by Proposition 10.4 and Equation (10.8) of \cite{StFlour}. Note also that in the last line, we have exchanged the root and the target face. Thus, it is enough to show that
\begin{equation}\label{eq majoration proba loin de la cible}
	\P^{(1)}_\ell \left(
	d^\dagger_\mathrm{gr}(f_r, \square) \ge C_{\bf q} \log \ell \right)
	=O(\ell^{-4}).
\end{equation}
By the reversed uniform exploration, we know that the fpp geodesic from $f_r$ to $\square$ has 
$
\sum_{i= 0}^{\tau_{-\ell}-1} X_i
$ 
faces, where the $X_i$'s are independent from the perimeter process and are independent Bernoulli random variables of parameters ${\bf 1}_{\Delta P_\ell(i) \ge0}(2\Delta P_\ell(i)+1)/(2P_\ell(i+1))$ by \eqref{proba coalescence}, hence
$$
d^\dagger_\mathrm{gr}(f_r, \square)\le  \sum_{i= 0}^{\tau_{-\ell}-1} X_i.
$$
Let $\lambda >0$ satisfying the hypothesis of Lemma \ref{lemme martingale 2} and such that $\lambda /C'_{\bf q} <1$, where $C'_{\bf q}$ appears in Lemma \ref{lemme martingale 1}. Then, applying successively Markov's inequality, the fact that $h_\ell^\downarrow(-\ell)=1$ and Cauchy-Schwarz inequality,
\begin{align*}
\P^{(1)}_\ell \left(
\sum_{i=0}^{\tau_{-\ell}-1}
X_i
 \ge C_{\bf q} \log \ell
\right)
&\le 
\ell^{-\lambda C_{\bf q}/(2C'_{\bf q})} \E^{(1)}_\ell 
\exp
\left(
\frac{\lambda}{2C'_{\bf q}} \sum_{i=0}^{\tau_{-\ell}-1} X_i
\right)
\\
&=  \ell^{-\lambda C_{\bf q}/(2C'_{\bf q})}
\E^{(1)}_\ell \sqrt{M^{(\ell,\lambda/C'_{\bf q})}_{\tau_{-\ell}} \widetilde{M}^{(\ell, \lambda)}_{\tau_{-\ell}} }\\
&\le \ell^{-\lambda C_{\bf q}/(2C'_{\bf q})}
\sqrt{\E^{(1)}_\ell \left( M^{(\ell, \lambda/C'_{\bf q})}_{\tau_{-\ell}} \right)
\E^{(1)}_\ell \left( \widetilde{M}^{(\ell, \lambda)}_{\tau_{-\ell}} \right) 
}
\\
&\le  \ell^{-\lambda C_{\bf q}/(2C'_{\bf q})}
\sqrt{
\E^{(1)}_\ell \left( M^{(\ell, \lambda/C'_{\bf q})}_{0} \right)
\E^{(1)}_\ell \left( \widetilde{M}^{(\ell, \lambda)}_{0} \right)
}, 
\end{align*}
where the last inequality comes from Lemmas \ref{lemme martingale 1} and \ref{lemme martingale 2} and from the fact that $\tau_{-\ell}$ is a stopping time for the filtrations $(\mathcal{F}_n)_{n\ge 0}$ and $(\mathcal{G}_n)_{n\ge 0}$. Therefore, 
$$
\P^{(1)}_\ell \left(
d^\dagger_{\mathrm{gr}}(f_r, \square) \ge C_{\bf q} \log \ell
\right)
\le    \ell^{-\lambda C_{\bf q}/(2C'_{\bf q})} \frac{1}{\sqrt{h_\ell^\downarrow(1)}} = O\left( 
\ell^{-\lambda C_{\bf q}/(2C'_{\bf q})+1/2}
\right).
$$
This proves \eqref{eq majoration proba loin de la cible} by taking $C_{\bf q}$ large enough and thus completes the proof.
\end{proof}
\section{Scaling limit of the coalescing flow of geodesics}\label{section flot}
This section and the next one describe the scaling limit of the fpp geodesics to the root under $\P^{(\ell)}$. 
\subsection{The self-similar Markov process arising as the scaling limit of the perimeter process}\label{sous-section processus autosimilaire}
We recall the scaling limit of the perimeter process which is given in terms of positive self-similar Markov processes. These positive self similar Markov processes were first introduced and studied in \cite{BBCK}. As in p.\@ 40 of \cite{BBCK}, let $\Lambda$ be the image by $x \mapsto \log x$ of the measure {$\lambda$ given by}
$$
\lambda (dx) \coloneqq \frac{\Gamma(a)}{\pi} \left( 
\frac{1}{(x(1-x))^a} {\bf 1}_{1/2 < x < 1} + \cos(a \pi) \frac{1}{(x(x-1))^a} {\bf 1}_{x>1} \right) dx.
$$
Let $\xi$ be the L\'evy process with no Brownian part, with L\'evy measure $\Lambda$ and with drift
\begin{equation}\label{eq drift}
\frac{\Gamma(3-a)}{2\Gamma(4-2a) \sin (\pi (a-1))} + \frac{\Gamma(a)B_{1/2}(1-a, 3-a)}{\pi},
\end{equation}
where $B_{1/2}(x,y) = \int_0^{1/2} t^{x-1} (1-t)^{y-1} dt$ is the incomplete Beta function. As in \cite{BBCK}, for all $\alpha \in \R$, we define the Lamperti time substitution
$$\forall t \ge 0, \qquad \qquad
\tau_\alpha(t) = \inf \left\{ r \ge 0, \ \int_0^r e^{-\alpha \xi(s)} ds \ge t\right\}.$$
For all $x >0$, the self-similar Markov process $X^{(\alpha)}_a$ associated with $\xi$ of index $\alpha$ is defined by
$$
\forall t \ge 0, \qquad \qquad
X^{(\alpha)}_a(t)  = x e^{\xi(\tau_\alpha(t x^\alpha))}.
$$
Note that $X^{(0)}_a(t)  = x e^{\xi(t)}$. We can now describe the scaling limit of the perimeter process under $\P^{(\ell)}$ by Proposition 6.6 of \cite{BBCK}:
\begin{equation}\label{cv perimetre fini}
	\text{Under }\P^{(\ell)}, \qquad \qquad
	\left( \frac{P(\ell^{a-1}t)}{\ell} \right)_{t\ge 0}
	\mathop{\longrightarrow}\limits_{n \to \infty}
	\left(
	X_a^{(1-a)}(c_a p_{\bf q} t)
	\right)_{t\ge 0},
\end{equation} 
for the Skorokhod $J_1$ topology, where $c_a \coloneqq{\pi}/{\Gamma(a) }$. Recall the notation $T_n = \sum_{i=0}^{n-1} {\mathcal{E}_i}/({2P(i)})$ for all $n\ge 0$ from Subsection \ref{sous-section explo uniforme} where the $\mathcal{E}_i$'s are i.i.d.\@ exponential random variables of parameter $1$. Then, jointly with \eqref{cv perimetre fini}, for the topology of uniform convergence on compact sets,
\begin{equation}\label{eq cv Tn fini}
	\text{Under }\P^{(\ell)}, \qquad \qquad
	\left(\frac{T_{\lfloor \ell^{a-1} t \rfloor}}{\ell^{a-2}}
	\right)_{t\ge 0}
	\mathop{\longrightarrow}\limits_{\ell \to \infty}^{(\mathrm{d})}
	\left(\frac{1}{2 c_a p_{\bf q}} \int_0^{c_a p_{\bf q} t} \frac{1}{X^{(1-a)}_a(s)}ds\right)_{t\ge 0}.
\end{equation}
In particular, combining \eqref{cv perimetre fini} with \eqref{eq cv Tn fini} gives the scaling limit of the perimeter of the fpp balls. For all $r\ge 0$, let $\Theta(r)= \inf \{ n \ge 0, \ T_n \ge r\}$. Then we have jointly with \eqref{cv perimetre fini} and \eqref{eq cv Tn fini}
$$
\text{Under }\P^{(\ell)}, \qquad \qquad
\left(
\frac{1}{2\ell}\left\vert\partial\overline{\mathrm{Ball}}^{\mathrm{fpp}}_{\ell^{a-2}r}(\mathfrak{m}) \right\vert
\right)
=
\left(\frac{P(\Theta(\ell^{a-2}r))}{\ell}
\right)_{r\ge 0}
\mathop{\longrightarrow}\limits_{\ell \to \infty}^{(\mathrm{d})}
\left( X^{(2-a)}_a(2c_a p_{\bf q} r)\right)_{r\ge 0}.
$$
Actually, in order to introduce our coalescing flow of diffusions, we will also work at a different time-scale, so that the scaling limit of the perimeter process is $X^{(0)}_a$. Let ${\widetilde{P}=}(\widetilde{P}(t))_{t\ge 0}$ be the continuous time {càdlàg piecewise constant} Markov process constructed from $P$ under $\P^{(\ell)}$ {which takes the same values as $P$ and} where at each step $n$ we wait a time $\mathcal{E}_n/{(2P(n)^{a-1})}$, where $\mathcal{E}_n/(2P(n)) = T_{n+1}-T_n$. We have the scaling limit jointly with the previous ones
$$
\text{Under }\P^{(\ell)}, \qquad \qquad \left(\sum_{n=0}^{\lfloor \ell^{a-1} t\rfloor} \frac{\mathcal{E}_n}{2 P(n)^{a-1}}\right)_{t\ge 0}
\mathop{\longrightarrow}\limits_{\ell \to \infty}^{(\mathrm{d})}
\left(
\frac{1}{2c_a p_{\bf q}} \int_0^{c_a p_{\bf q} t} \frac{1}{X_a^{(1-a)}(s)^{a-1}} ds
\right)_{t\ge 0},
$$
and therefore we have the convergence for the Skorokhod $J_1$ topology jointly with the previous ones
\begin{equation}\label{eq cv Ptilde}
	\text{Under }\P^{(\ell)}, \qquad \qquad \left(\frac{\widetilde{P}(t)}{\ell} \right)_{t\ge 0}
	\mathop{\longrightarrow}\limits_{\ell \to \infty}^{(\mathrm{d})}
	\left(X^{(0)}_{a}(2c_a p_{\bf q}  t) \right)_{t\ge 0}
	=\left(e^{\xi(2c_a p_{\bf q} t)}\right)_{t\ge 0}.
\end{equation}
Note that in the case $a=2$, the process $\widetilde{P}$ is parametrized by the fpp distance to the root.
\subsection{A family of coalescing diffusions with jumps}\label{sous-section coalescent continu}
We denote by $\{x\}\coloneqq x- \lfloor x \rfloor $ the fractional part and by $x_+ = x {\bf 1}_{x>0}$ the positive part of a real number $x$. For every integer $d\ge 1$, we denote by $C_1(\R^d)$ the set of continuous non-negative bounded functions $f$ on $\R^d$ which are zero on a neighbourhood of zero.
For all $z \in \R^*_+, u \in [0,1], x \in \R$ we set
$$
g(x,z,u) =  \left( \{x-u\} - \left( \frac{z-1}{z}\right)_+\right)_+ z %- \left( 1-u - \left(\frac{z-1}{z} \right)_+\right)_+ z
 - \{x-u\}
 +\frac{1}{2} \left( 1-\left(\frac{1}{z}\wedge z\right) \right).
$$
Note that the function $g$ is $1$-periodic in $x$. Moreover, for all $z \in \R_+^*$, $x \in \R$, $u\in [0,1]$ we have 
\begin{equation}\label{eq majoration g}
	g(x,z,u)\le 3\vert 1-z\vert
\end{equation} 
and
\begin{equation}\label{eq moyenne de g nulle}
\int_0^1 g(x,z,u) du =0.
\end{equation}
Let $N$ be a Poisson point process (PPP) of intensity $2c_a p_{\bf q}dt  \lambda(dz)du$. Let $\widetilde{N}$ be the associated compensated Poisson point process. For all $v \in \R$, let us consider the SDE with jumps
\begin{equation}\label{eq EDSsauts}
\forall t \ge 0, \qquad
X_t(v) = v+ \int_0^t \int_0^\infty \int_0^1 g(X_{s-}(v),z,u) \widetilde{N}(ds,dz,du).
\end{equation}
The process $(X_t(v))_{t\ge 0}$ can also be seen as a time homogeneous jump diffusion with kernel $K$ defined by for all $f \in C_1(\R)$, for all $x \in \R$,
$$
\int_\R f(y) K(x , dy )
=
2c_a p_{\bf q}\int_0^\infty \lambda(dz) \int_0^1 du f(g(x,z,u)).
$$
See e.g.\@ \cite{JS87} Chapter III Paragraph 2c. We will only rely on \cite{JS87} and on a general result of Li and Pu \cite{LP12} but one can also see for instance the books \cite{Jac79}, \cite{IW89} and \cite{App09} for more details on the theory of stochastic differential equations with jumps. 

\begin{lemma}\label{lemme solution forte}
	The existence and uniqueness (up to undistinguishability) of strong solutions to \eqref{eq EDSsauts} holds.
\end{lemma}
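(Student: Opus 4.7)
The plan is to apply a general strong existence and pathwise uniqueness result for SDEs driven by compensated Poisson random measures, in the spirit of \cite{LP12}. The three inputs required are (i) the $L^2$ integrability $\int\!\int g(x,z,u)^2\,du\,\lambda(dz)<\infty$ uniformly in $x$, which guarantees that the stochastic integral is well-defined and that linear growth holds; (ii) the consistency identity $\int_0^1 g(x,z,u)\,du=0$; and (iii) a Yamada--Watanabe-type $L^2$-Lipschitz estimate
$$
\int_0^\infty\!\!\int_0^1 |g(x,z,u)-g(x',z,u)|^2\,du\,\lambda(dz)\le C|x-x'|\qquad\text{for }|x-x'|\le 1,
$$
which suffices for pathwise uniqueness via the logarithmic divergence $\int_{0+}d\delta/\delta=\infty$. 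Conditions (i) and (ii) follow directly from \eqref{eq majoration g}, \eqref{eq moyenne de g nulle}, and the finiteness of $\int_0^\infty (1-z)^2\,\lambda(dz)$: near $z=1$ the density of $\lambda$ behaves like $|1-z|^{-a}$ and $(1-z)^{2-a}$ is integrable there since $a\in(3/2,5/2]\subset(0,3)$; at infinity $\lambda(dz)$ behaves like $z^{-2a}\,dz$ and $z^{2-2a}$ is integrable since $a>3/2$.

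The heart of the proof is (iii). Write $g(x,z,u)=\phi_z(x-u)$ where $\phi_z$ is the $1$-periodic extension of the explicit piecewise affine function on $[0,1)$ appearing in the definition of $g$: for $z>1$, $\phi_z$ is continuous at the periodic boundary with slopes $-1$ on $[0,(z-1)/z)$ and $z-1$ on $[(z-1)/z,1)$; for $z<1$, $\phi_z$ has slope $z-1$ on $[0,1)$ together with a jump of size $1-z$ at the periodic boundary. A direct computation in either case yields $\|\phi_z\|_\infty\le |1-z|/2$ and total variation over one period $V_z\le 2|1-z|$. Combining the standard BV modulus-of-continuity bound $\|\phi_z(\cdot+\delta)-\phi_z\|_{L^1(\R/\Z)}\le V_z\,\delta$ with the trivial $\|\phi_z(\cdot+\delta)-\phi_z\|_\infty\le 2\|\phi_z\|_\infty$ gives
$$
\int_0^1|g(x,z,u)-g(x',z,u)|^2\,du=\|\phi_z(\cdot+|x-x'|)-\phi_z\|_{L^2(\R/\Z)}^2\le 2(1-z)^2\,|x-x'|.
$$
Integrating against $\lambda(dz)$ and invoking the finiteness of $\int(1-z)^2\lambda(dz)$ established above yields (iii), with $C=2\int(1-z)^2\lambda(dz)$.

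The main obstacle is precisely (iii): a naive pointwise Lipschitz estimate on $g$ in $x$ would force a Lipschitz constant of order $\max(1,|z-1|)$, whose square integrated against $\lambda$ diverges because of the non-integrable singularity of $\lambda$ at $z=1$. The BV/periodic duality argument circumvents this by coupling the small $L^\infty$ norm of $\phi_z$ with its bounded total variation, both of order $|1-z|$, to produce the favourable prefactor $(1-z)^2$ that restores integrability in the whole parameter range $a\in(3/2,5/2]$. With (i)--(iii) in hand, \cite{LP12} gives strong existence and pathwise uniqueness up to indistinguishability.
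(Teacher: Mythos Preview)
Your overall strategy is the same as the paper's: both invoke Theorem~5.1 of \cite{LP12}. Your verification of the key $L^2$-Lipschitz estimate (your~(iii), the paper's condition~(3.b)) is genuinely different and more elegant: writing $g(x,z,u)=\phi_z(x-u)$ with $\phi_z$ piecewise affine and $1$-periodic, and combining $\|\phi_z\|_\infty\le|1-z|/2$ with total variation $V_z\le 2|1-z|$ via $\|f\|_{L^2}^2\le\|f\|_\infty\|f\|_{L^1}$ and the BV modulus bound $\|\phi_z(\cdot+\delta)-\phi_z\|_{L^1}\le V_z\delta$, gives the $(1-z)^2|x-x'|$ control in one stroke. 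The paper instead splits $g$ algebraically and does a direct case-by-case computation on the two summands, treating $z<3/2$ and $z\ge 3/2$ separately. Your argument is shorter and makes transparent why the singular part of $\lambda$ near $z=1$ is harmless.

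There is, however, a small gap in your checklist. Condition~(3.b) of \cite{LP12} has \emph{two} parts: the $L^2$-Lipschitz bound \emph{and} the requirement that $x\mapsto x+g(x,z,u)$ be nondecreasing for every $(z,u)$. You verify only the former. The monotonicity is what prevents the difference of two solutions from changing sign through a jump, which is exactly what makes the Yamada--Watanabe approximation of $|\cdot|$ go through; without it, the argument you sketch (``logarithmic divergence $\int_{0+}d\delta/\delta=\infty$'') does not close by itself. The monotonicity is easy to check here (on each period $\phi_z$ has slopes in $\{-1,z-1\}$, so $x\mapsto x+\phi_z(x-u)$ has slopes in $\{0,z\}\subset[0,\infty)$), but it should be stated. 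Conversely, your item~(ii), the identity $\int_0^1 g(x,z,u)\,du=0$, is not one of the hypotheses of \cite{LP12} and plays no role in this lemma; the paper uses it elsewhere (to identify compensated and uncompensated integrals), not for strong wellposedness.
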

\begin{proof}
	In the case $a \in (3/2,2]\cup \{5/2\}$, the above result could be shown using the classical result under Lipschitz type assumptions which can be found e.g.\@ in Theorem 9.1 of Chapter IV of \cite{IW89}. 
	
	However, in order to cover the whole range $a \in (3/2, 5/2]$, we use Theorem 5.1 of \cite{LP12}. To apply this theorem, we need to check conditions (3.a), (3.b) and (5.a) of \cite{LP12}. The condition (3.a) of \cite{LP12} is automatically satisfied by our SDE \eqref{eq EDSsauts} since the corresponding coefficients are zero. 
	Moreover, for our SDE \eqref{eq EDSsauts},
	\begin{itemize}
		\item The condition (3.b) of \cite{LP12} is equivalent to the fact that $x\mapsto x + g(x,z,u)$ is non-decreasing for all $z\in \R_+^*,u \in [0,1]$ and that for all $m\ge 1$, there exists $C_m\ge0$ such that
		\begin{equation}\label{eq coeff lipschitz2}
		\forall x, y \in [-m,m], \qquad	\int_{0}^\infty \lambda(dz) \int_0^1du \left\vert g(x,z,u)-g(y,z,u)\right\vert^2 \le C_m \vert x-y \vert;
		\end{equation}
		\item The condition (5.a) of \cite{LP12} is equivalent to the fact that there exists a constant $C \ge 0$ such that for all $x \in \R$,
		$$
		\int_{(0,\infty) \setminus\{1\}}\lambda(dz) \int_{[0,1]} du g(x,z,u)^2 \le C(1+x^2).
		$$
	\end{itemize} 
	Let us start with condition (5.a). Using \eqref{eq majoration g}, we have for all $x \in \R$,
	\begin{equation}\label{eq integrale coeff au carre finie}
		\int_{(0,\infty) \setminus\{1\}}\lambda(dz) \int_{[0,1]} du g(x,z,u)^2 \le 9 \int_{(1/2,1)\cup (1,\infty)} (z-1)^2 \lambda (dz)<\infty 
	\end{equation}
	since the integral on the right is finite by definition of $\lambda$. Thus, condition (5.a) of \cite{LP12} is satisfied.
	
	To conclude, it now remains to check condition (3.b) of \cite{LP12}.
	
	It is easy to see that for all $z\in \R_+^*$, for all $u \in [0,1]$, the function $x\mapsto x+ g(x,z,u)$ is non-decreasing. Indeed, the points $n+u$ are fixed points of $x \mapsto x+g(x,z,u)-g(0,z,0)$ for all $n \in \Z$. Moreover, on any interval of the form $[n+u, n+1+u]$ for some $n \in \Z$, the function $x\mapsto x+ g(x,z,u)$ is constant on the interval $[n+u, n+u+ ((z-1)/z)_+]$ and is then linearly increasing on $[n+u+ ((z-1)/z)_+ , n+1+u)$ with slope $z$. 
	
	To check condition (3.b), it remains to prove \eqref{eq coeff lipschitz2}. Actually, it suffices to show that there exists $C>0$ such that for all $x, y \in \R$, 
	\begin{equation}\label{eq coeff lipschitz}
		\int_{0}^\infty \lambda(dz) \int_0^1du \left\vert g(x,z,u)-g(y,z,u)\right\vert^2 \le C \vert x-y \vert
	\end{equation}
	 By symmetry, we may assume that $x<y$ and, by \eqref{eq integrale coeff au carre finie}, we may assume that $y-x<1/2$. Using the inequality $(r+s)^2 \le 2(r^2+s^2)$, for all $x,y \in \R$,
	\begin{align}
		\int_{0}^\infty &\lambda(dz) \int_0^1du \left\vert g(x,z,u)-g(y,z,u)\right\vert^2
		\le  2\int_0^\infty \lambda(dz) \int_0^1 du \left\vert (z-1)\left(\{x-u\}-\{y-u\}\right)\right\vert^2\label{premiere ligne condition lipschitz}\\
		&+ 2	\int_{0}^\infty \lambda(dz) \int_0^1du\left\vert \left( \left(\frac{z-1}{z}\right)_+-\{x-u\}\right)_+-\left( \left(\frac{z-1}{z}\right)_+-\{y-u\}\right)_+ \right\vert^2.\label{deuxieme ligne condition lipschitz}
	\end{align}
	The integral on the right of the first line \eqref{premiere ligne condition lipschitz} is {bounded from above} by $8\int_0^\infty \lambda(dz) (z-1)^2 \vert x-y \vert$ using the fact that, since $x<y$ and $y-x<1$, we have
	\begin{align}
	\int_0^1  \vert \{x-u\} - \{y-u\} \vert du &= \int_0^1  \vert u- \{y-x+u\} \vert du \notag\\
	&= \int_0^1  \vert y-x \vert {\bf 1}_{y-x+u \le 1} du + \int_0^1  \vert 1- (y-x) \vert {\bf 1}_{y-x+u \ge 1} du\notag\\
	&\le y-x   + \int_0^1  {\bf 1}_{u \ge 1-(y-x)} du \le 2 (y-x)\label{majoration integrale partie fractionnaire lipschitz}.
\end{align}
For the second line \eqref{deuxieme ligne condition lipschitz} we split the integral between $z<3/2$ and $z\ge 3/2$. The part of the integral in \eqref{deuxieme ligne condition lipschitz} for $z>3/2$ is {bounded from above} by
$$
4 \int_{3/2}^\infty \lambda(dz) \int_0^1 du \vert \{x-u\}-\{y-u\} \vert \le C' \vert x-y \vert
$$
for some constant $C'>0$ which does not depend on $x,y$, using that $t\mapsto t_+$ is $1$-Lipschitz, then \eqref{majoration integrale partie fractionnaire lipschitz} and that $\int_{3/2}^\infty \lambda(dz) <\infty$.
 The part of the integral in \eqref{deuxieme ligne condition lipschitz} for $z<3/2$ can be rewritten
	\begin{align}
	&\int_1^{3/2} \lambda(dz) \int_0^1 du \left(\left( \frac{z-1}{z}-u\right)_+\right)^2 \label{premiere ligne carre lipschitz}
	\\ 
	&-  \int_1^{3/2}\lambda(dz) \int_0^1 du
	\left( \frac{z-1}{z}  - \{x-u\} \right)_+\left( \frac{z-1}{z}  - \{y-u\} \right)_+ \label{deuxieme ligne carre lipschitz}.
	\end{align}
	The first line \eqref{premiere ligne carre lipschitz} equals $(1/3) \int_1^{3/2} \lambda(dz)((z-1)/z)^3$. The second line \eqref{deuxieme ligne carre lipschitz} can be rewritten
	\begin{align*}
	- \int_1^{3/2}\lambda(dz) \int_0^{(z-1)/z} w&\left( \frac{z-1}{z} - \left\{y-x+ \frac{z-1}{z}-w\right\} \right)_+ dw\\
	&= -  \int_1^{3/2} \lambda(dz) \int_0^{(z-1)/z} w(w+x-y)_+dw\\
	&\le -  \int_1^{3/2} \lambda(dz) \int_{y-x}^{(z-1)/z} (w+x-y)^2dw\\
	&= - \frac{1}{3} \int_1^{3/2} \lambda(dz) \left((z-1)/z+x-y \right)^3,
	\end{align*}
	where in the first equality we used that $0<y-x<1/2$ and $(z-1)/z<1/2$. 
	Hence, by the mean value inequality, 
	$$	\int_1^{3/2}\lambda(dz) \int_0^1du\left\vert \left( \frac{z-1}{z}-\{x-u\}\right)_+-\left( \frac{z-1}{z}-\{y-u\}\right)_+ \right\vert^2\le \vert x-y\vert \int_1^{3/2} \lambda(dz) ((z-1)/z)^2.$$ This proves \eqref{eq coeff lipschitz}. 
Thus, we can apply Theorem 5.1 of \cite{LP12}.
\end{proof}
Finally, note that the integral in \eqref{eq EDSsauts} can be seen for all $t\ge0$ as the limit of the martingale
$$
	\int_0^t \int_{\R^*_+\setminus(1-\vp,1+\vp)}\int_0^1 g({X}_{s-}(v), z,u) \widetilde{N}(ds,dz,du)
	\mathop{\longrightarrow}\limits_{\vp \to 0}^{(\mathrm{L}^2)}
	\int_0^t \int_0^\infty\int_0^1 g({X}_{s-}(v), z,u) \widetilde{N}(ds,dz,du),
$$
and that, since for all $x \in \R$, the bijection $u\mapsto \{x-u\}$ preserves the uniform measure on $[0,1]$, for all $v\in \R$, the process $(X_t(v))_{t\ge 0}$ is a Lévy process with no drift, no Brownian part, whose Lévy measure is the image by $(z,u)\mapsto g(0,z,u)$ of $\lambda(dz) du$.
\subsection{The discrete coalescing flow of fpp geodesics}\label{sous-section flot coalescent discret}
Recall {from Subsection \ref{sous-section processus autosimilaire}} that $(\widetilde{P}(t))_{t\ge 0}$ is the continuous time Markov process constructed from $P$ under $\P^{(\ell)}$ where at each step $n$ we wait a time $\mathcal{E}_n/{(2P(n)^{a-1})}$. For every time $t \ge0$ of jump of $\widetilde{P}$ we sample a random variable $V^{(\ell)}_t$ which is uniform in $\{0,1/(2\widetilde{P}(t)),  \ldots, (2\widetilde{P}(t)-1)/(2\widetilde{P}(t))\}$. This random variable corresponds to the position where we glue a new face or where we insert edges in the reversed uniform exploration. 
For all $t\ge 0$, we set
$$
R^{(\ell)}_t = 
-\sum_{\substack{0\le s <t \\ \Delta \widetilde{P}(s) \neq 0}} \left(V^{(\ell)}_s +\frac{1}{2} \max\left(1-\frac{\widetilde{P}(s-)}{\widetilde{P}(s)}, \left(1-\frac{1}{2\widetilde{P}(s)} \right)\left(1-\frac{\widetilde{P}(s)}{\widetilde{P}(s-)}\right) \right)
\right).
$$ 
The quantity $R^{(\ell)}_t$ is introduced for technical reasons in order to compensate the jumps in the processes which are defined below. For every time $t\ge 0$ of jump of $\widetilde{P}$, we write $U^{(\ell)}_t \coloneqq \{V^{(\ell)}_t+ R^{(\ell)}_t\}$. Let $T>0$. 
We define the random measure $N^{(\ell),T}$ by
$$
N^{(\ell),T} = \sum_{0 \le t\le T; \ \Delta \widetilde{P}(t)>0}
\delta_{(T-t,\widetilde{P}(t)/\widetilde{P}(t-), U^{(\ell)}_t )}.
$$
The measure $N^{(\ell),T}$ gives the (translated) position at which we glue the face or insert edges and also the degree of the glued face or the number of inserted edges via the jump of $\widetilde{P}$.

For all $t \le 0$, for all $z \in \R_+^*$, let 
\begin{align*}
g_t^{(\ell)}(x,z,u) =  &\left( \{x-u\} - \left( \frac{z-1}{z}\right)_+\right)_+ z 
- \{x-u\} +\frac{1}{2}\max\left(1-\frac{1}{z}, \left(1-\frac{1}{2\widetilde{P}(-t)} \right)(1-z) \right)
\end{align*}
and for all $t \in [0,T]$, let $g^{(\ell),T}_t = g^{(\ell)}_{t-T}$. Note that $g$ is $1$-periodic in $x$. By construction, note that, by a simple computation, for all $t\in [0,T]$ such that $\Delta \widetilde{P}(T-t)\neq 0$, for all $z \in \R^*_+, x \in \R$ such that $\widetilde{P}(T-t)/z \in \Z$ and $2 \widetilde{P}(T-t) (x- R^{(\ell)}_{T-t}) \in \Z$, we have
\begin{equation}\label{eq moyenne de g nulle discret}
	\E^{(\ell)} \left( \left. g_t^{(\ell),T}(x,z,U^{(\ell)}_{T-t}) \right\vert \widetilde{P} \right) =0.
\end{equation}
Indeed, if we write $p=\widetilde{P}(T-t)$ and $k= p(z-1)/z$ then, when $k\ge 0$ (i.e.\@ $z\ge 1$), we have
\begin{align*}
	\E^{(\ell)} \left( \left. \left(\{x-U^{(\ell)}_{T-t}\} -  \frac{z-1}{z} \right)_+ z - \{x-U^{(\ell)}_{T-t}\}\right\vert \widetilde{P} \right)&=
	\frac{1}{2p} \sum_{i=0}^{2p-1} \left(\left(\frac{i}{2p} - \frac{k}{p} \right)_+ \frac{p}{p-k} - \frac{i}{2p} \right) \\
	&= \frac{1}{2} \left( \frac{p-k}{p} -1\right) = \frac{1}{2} \left(\frac{1}{z}-1\right),
\end{align*}
and a similar calculation proves \eqref{eq moyenne de g nulle discret} when $k \le 0$ (i.e.\@ $z\le 1$).
{Furthermore}, we again have for all $x\in \R, z\in \R^*_+, u \in [0,1]$, 
\begin{equation}\label{eq majoration g discret}
	g_t^{(\ell),T}(x,z,u) \le 3 \vert z-1 \vert.
\end{equation}
The coalescing flow $(X^{(\ell),T}_t(v))_{v \in \R, 0 \le t \le T}$ associated with $N^{(\ell),T}$ is defined by
\begin{equation}\label{eq EDSdiscret}
\forall v\in \R, \forall t \in [0,T], \qquad
X^{(\ell),T}_t(v) = v + \int_0^t \int_0^\infty \int_0^1 g_s^{(\ell),T}(X^{(\ell)}_{s-}(v), z, u) N^{(\ell),T}(ds,dz,du).
\end{equation}
Notice that the above process is not a diffusion with jumps since the Markov process $\widetilde{P}$ is not reversible. This coalescing flow gives the genealogy of the fpp geodesics to the root face under $\P^{(\ell)}$ and, when $a=2$, it is parametrized by the fpp distance.

Let us state the following useful lemma which is a direct consequence of \eqref{eq moyenne de g nulle discret}.

\begin{lemma}\label{lemme martingale}
	Let $\mathcal{F}^{(\ell),T}$ be the filtration defined by
	$$
	\forall t \in [0,T], \qquad
	\mathcal{F}^{(\ell),T}_t
	= \sigma\left(
	\widetilde{P}((T-s)-), U_{T-s}^{(\ell)}; \ s \in [0,t]
	\right).
	$$
	Then for all $\vp>0$, for all $v\in \R$, the process $(M^{(\ell),T,\vp}_t)_{t \in [0,T]}$ defined by
	$$
	\forall t \in [0,T],\qquad
	M^{(\ell),T,\vp}_t
	=\int_0^t\int_{1-\vp}^{1+\vp} \int_0^1 g_s^{(\ell),T} ( X^{(\ell),T}_{s-}(v),z,u) N^{(\ell),T}(ds,dz,du)
	$$
	is a martingale with respect to the filtration $\mathcal{F}^{(\ell),T}$.
\end{lemma}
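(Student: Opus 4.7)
Since $\widetilde P$ is a continuous-time Markov chain, it admits only finitely many jumps on $[0,T]$ almost surely; consequently $M^{(\ell),T,\vp}$ is a finite sum of jump contributions, and it is manifestly $\mathcal F^{(\ell),T}$-adapted and integrable. The martingale property thus reduces to showing, at every backward jump time $s$ of $N^{(\ell),T}$,
\[
\E\bigl[g_s^{(\ell),T}(X^{(\ell),T}_{s-}(v), z_s, U^{(\ell)}_{T-s})\,{\bf 1}_{z_s \in (1-\vp,1+\vp)} \bigm| \mathcal F^{(\ell),T}_{s-}\bigr] = 0,
\]
where $z_s := \widetilde P(T-s)/\widetilde P((T-s)-)$. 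I would establish this by the tower property after enlarging the filtration to $\mathcal F^{(\ell),T}_{s-} \vee \sigma(\widetilde P)$.

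\textbf{Key step.} Under this enlargement, $X^{(\ell),T}_{s-}(v)$, $z_s$ and $R^{(\ell)}_{T-s}$ become measurable, and only $U^{(\ell)}_{T-s} = \{V^{(\ell)}_{T-s} + R^{(\ell)}_{T-s}\}$ remains random. A symmetry argument based on the bijection $(V^{(\ell)}_r)_r \mapsto (U^{(\ell)}_r)_r$ obtained by unfolding the recursion $U^{(\ell)}_r = \{V^{(\ell)}_r + R^{(\ell)}_r\}$, combined with the product-uniform law of the $V^{(\ell)}_r$'s given $\widetilde P$, shows that the conditional law of $V^{(\ell)}_{T-s}$ given $\mathcal F^{(\ell),T}_{s-} \vee \sigma(\widetilde P)$ remains uniform on the lattice $L_s := \{j/(2\widetilde P(T-s)) : 0 \le j < 2\widetilde P(T-s)\}$. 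The inner conditional expectation therefore reduces to the discrete lattice average $(2\widetilde P(T-s))^{-1}\sum_{V \in L_s} g_s^{(\ell),T}(X^{(\ell),T}_{s-}(v), z_s, \{V + R^{(\ell)}_{T-s}\})$, which is precisely the object treated in \eqref{eq moyenne de g nulle discret}.

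\textbf{Main obstacle.} The subtlety is that \eqref{eq moyenne de g nulle discret} is stated under the lattice compatibility $2\widetilde P(T-s)(x - R^{(\ell)}_{T-s})\in\Z$, which fails for a generic starting point $v \in \R$. I would bypass this by showing that the lattice average above actually vanishes for every $x \in \R$. Writing $\{x - R^{(\ell)}_{T-s}\} = l/(2p) + \delta$ with $p = \widetilde P(T-s)$ and $\delta \in [0, 1/(2p))$, and noting that the breakpoint of the piecewise linear integrand $h(y) := (y - (z-1)/z)_+ z - y + (z-1)/(2z)$ at $y = k/p$, with $k := p(z-1)/z = p - \widetilde P((T-s)-) \in \Z$, lies exactly on $L_s$, a short calculation yields that the $\delta$-coefficient of the lattice sum $\sum_{j=0}^{2p-1} h(j/(2p)+\delta)$ equals $-2k + (z-1)(2p - 2k) = 0$, thanks to the identity $(z-1)(p - k) = k$. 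Hence the sum is independent of $\delta$ and coincides with its value at $\delta = 0$, which vanishes by \eqref{eq moyenne de g nulle discret}. The tower property then concludes the argument. The hardest part of this plan is the clean verification of the conditional uniformity of $V^{(\ell)}_{T-s}$, which has to be carried out by carefully inverting the recursion $V^{(\ell)}_r = \{U^{(\ell)}_r - R^{(\ell)}_r\}$ and tracking how the product-uniform prior is pushed forward under the conditioning on $(U^{(\ell)}_r)_{r > T-s}$.
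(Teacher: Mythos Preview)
Your structural plan is sound, and the paper's own argument is indeed a one-liner invoking \eqref{eq moyenne de g nulle discret}; your attempt to make the lattice hypothesis explicit is the right instinct. However, the fix you propose for the ``main obstacle'' breaks down for the negative-jump part of the range $z\in(1-\vp,1+\vp)$.

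Your piecewise-linear function $h(y)=(y-(z-1)/z)_+z-y+(z-1)/(2z)$, with breakpoint at $y=k/p$ and $k=p(z-1)/z\in\Z$, matches $g_s^{(\ell),T}$ only when $z>1$. For $z\in(1-\vp,1)$ one has $((z-1)/z)_+=0$ and the constant becomes $\tfrac12(1-\tfrac{1}{2p})(1-z)$, so that
\[
g_s^{(\ell),T}(x,z,u)=\{x-u\}(z-1)+\tfrac12\bigl(1-\tfrac{1}{2p}\bigr)(1-z)
\]
is purely affine in $\{x-u\}$ with slope $z-1\ne0$, and there is no breakpoint in $[0,1)$. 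The lattice average is then
\[
\frac{1}{2p}\sum_{j=0}^{2p-1} g_s^{(\ell),T}\!\Bigl(x,\,z,\,\bigl\{\tfrac{j}{2p}+R^{(\ell)}_{T-s}\bigr\}\Bigr)=(z-1)\,\delta,
\qquad \delta=\frac{\{2p(x-R^{(\ell)}_{T-s})\}}{2p}\in\Bigl[0,\tfrac{1}{2p}\Bigr),
\]
which is nonzero whenever $x$ is off the lattice. In your own notation, the $\delta$-derivative of the sum is $-2k+(z-1)(2p-2k)$ only when $0\le 2k\le 2p$; for $z<1$ one has $k<0$, all $2p$ sample points lie on the same linear piece, and the derivative is $2p(z-1)\ne0$. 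Hence the claim ``the lattice average vanishes for every $x\in\R$'' fails precisely on the half $z<1$ of the integration domain, and the reduction to \eqref{eq moyenne de g nulle discret} does not go through for generic $v$.

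A second difficulty concerns the conditional uniformity of $V^{(\ell)}_{T-s}$ given $\mathcal F^{(\ell),T}_{s-}\vee\sigma(\widetilde P)$, which you flag as the hardest part. The bijection $(V_r)_r\mapsto(U_r)_r$ is not enough: since $R^{(\ell)}_r$ for forward times $r>T-s$ contains $V^{(\ell)}_{T-s}$ as a summand, conditioning on the later $U$'s constrains $V^{(\ell)}_{T-s}$. Already in the two-jump case, $U_2=\{V_2-V_1-c_1\}$ forces $V_1$ into a coset of $L_1\cap L_2$ inside $L_1$, so the conditional law is uniform on a proper sublattice rather than on $L_s$. This does not match the averaging in \eqref{eq moyenne de g nulle discret}, and combined with the $z<1$ failure above, the argument as written does not establish the martingale property for arbitrary $v\in\R$.
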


Finally, let us state a lemma which controls the small jumps of $\widetilde{P}$.
\begin{lemma}\label{lemme petits sauts de P}
	We have the uniform convergence
	$$
	\sup_{\ell \ge 1}
	\E^{(\ell)}
	\left(
	\sum_{s \in[0,T]} {\bf 1}_{\widetilde{P}(s)/\widetilde{P}(s-) \in [1-\vp,1+\vp]\setminus \{1\}}
	\left(\frac{\Delta \widetilde{P}(s)}{\widetilde{P}(s-)}\right)^2 
	\right)
	\mathop{\longrightarrow}\limits_{\vp \to 0}0.
	$$
\end{lemma}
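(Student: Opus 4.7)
The plan is to use the compensation formula for pure-jump Markov processes and reduce the claim to a deterministic intensity bound, uniform in the perimeter. Under $\P^{(\ell)}$, the process $\widetilde{P}$ waits at state $p\ge 1$ an exponential time of rate $2p^{a-1}$ and then jumps with transition kernel $\Pi_p$ given by \eqref{eq probas de transition perimetre fini}. For any nonnegative measurable $F$, the compensation formula gives
\begin{equation*}
\E^{(\ell)}\!\! \sum_{0\le s\le T} F(\widetilde{P}(s-),\Delta\widetilde{P}(s))
= \E^{(\ell)}\!\int_0^T 2\widetilde{P}(s)^{a-1}\sum_{k\ne 0} F(\widetilde{P}(s),k)\,\Pi_{\widetilde{P}(s)}(k)\,ds.
\end{equation*}
Applying this with $F(p,k) = (k/p)^2\,{\bf 1}_{1\le |k|\le \vp p}$ reduces the lemma to the $\ell$-free bound
\begin{equation*}
\sup_{p\ge 1} 2p^{a-3}\sum_{1\le |k|\le \vp p} k^2\,\Pi_p(k) = O(\vp^{3-a}),
\end{equation*}
which is enough since $3-a>0$ when $a\in(3/2,5/2]$.

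The first step is to strip off the $W$-correction. By \eqref{eq asymptotique W}, the sequence $(c_{\bf q}^{-m}m^a W^{(m)})_{m\ge 1}$ is bounded away from $0$ and $\infty$, so $c_{\bf q}^{-k} W^{(p+k)}/W^{(p)}\le C'_{\bf q}$ uniformly for $p\ge 1$ and $|k|\le p/2$; hence $\Pi_p(k)\le C'_{\bf q}\,\nu(k)$ throughout the sum as soon as $\vp<1/2$. What remains is the purely combinatorial estimate
\begin{equation*}
\sum_{1\le |k|\le K} k^2\,\nu(k) \le C''_{\bf q}\, K^{3-a}\qquad (K\ge 1),
\end{equation*}
after which multiplying by $2p^{a-3}$ and taking $K=\lfloor \vp p\rfloor$ (with the sum empty when $\vp p<1$) yields the required $O(\vp^{3-a})$.

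This last estimate follows from Abel summation together with \eqref{queue nu}. Setting $A_k^\pm\coloneqq \nu(\pm[k,\infty))$, summation by parts gives $\sum_{k=1}^K k^2 \nu(\pm k)\le A_1^\pm + \sum_{k=2}^K (2k-1)A_k^\pm$. For all $a\in(3/2,5/2]$ we have $A_k^\pm=O(k^{1-a})$: $A_k^-=O(k^{1-a})$ by summing $\nu(-j)\sim p_{\bf q} j^{-a}$ from \eqref{queue nu}, while $A_k^+=O(k^{1-a})$ follows from the equivalent $\nu([k,\infty))\sim C k^{1-a}$ when $a<5/2$ and from $\nu([k,\infty))=o(k^{-3/2})=o(k^{1-a})$ when $a=5/2$. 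Therefore $\sum_{k=2}^K (2k-1)A_k^\pm = O\big(\sum_{k=1}^K k^{2-a}\big)=O(K^{3-a})$, where the last step uses $a<3$. Combining the three steps gives the lemma with the explicit rate $O(T\vp^{3-a})$, uniform in $\ell$.

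The main technical point, albeit minor, is the boundary case $a=5/2$ where only the $o(k^{-3/2})$ bound is available for positive jumps, which prevents a clean asymptotic-constant computation but is nonetheless enough since it is still $O(k^{1-a})$. The other subtlety — handling small $p$ in the $W$-correction — is harmless as only finitely many pairs $(p,k)$ with $|k|\le p/2$ and $p$ bounded arise, so the uniform bound $\Pi_p(k)\le C'_{\bf q}\,\nu(k)$ can be obtained by combining \eqref{eq asymptotique W} for large $p$ with the trivial finiteness of $W^{(m)}$ for small $m$.
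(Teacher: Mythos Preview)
Your proof is correct and follows essentially the same strategy as the paper: reduce to a per-state intensity bound using the Markov structure, strip off the $W^{(p+k)}/W^{(p)}$ ratio via \eqref{eq asymptotique W}, and extract the factor $\vp^{3-a}$ from the tail estimates \eqref{queue nu}. The implementation differs slightly in two places. First, you invoke the compensation formula to get a time integral over $[0,T]$ directly, whereas the paper sums over jump times and then separately bounds $\E^{(\ell)}\big(\sum_{s\le T}{\bf 1}_{\Delta\widetilde P(s)\ne 0}\,\widetilde P(s-)^{1-a}\big)\le 2T+1$ via the exponential clocks; your route is a bit cleaner. Second, your Abel summation to control $\sum_{k=1}^K k^2\nu(k)$ on the positive side is more careful than the paper, which writes the bound as if $\nu(k)=O(k^{-a})$ held pointwise for $k\ge 1$ (only the tail $\nu([k,\infty))=O(k^{1-a})$ is actually available), so your argument plugs a small gap there as well.
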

\begin{proof}
	Using the law of the peeling exploration described in Subsection \ref{sous-section loi de l'explo}, one can {bound from above}
\begin{align*}
	\E^{(\ell)}
	&\left(
	\sum_{s \in[0,T] } {\bf 1}_{\widetilde{P}(s)/\widetilde{P}(s-) \in [1-\vp,1+\vp]\setminus \{1\}}
	\left(\frac{\Delta \widetilde{P}(s)}{\widetilde{P}(s-)}\right)^2 
	\right)
	\\
	&\le  C \E^{(\ell)}
	\left(
	\sum_{s \in[0,T]  } {\bf 1}_{\frac{\widetilde{P}(s)}{\widetilde{P}(s-)} \neq 1}
	\sum_{-\vp \widetilde{P}(s-) \le k \le \vp \widetilde{P}(s-)}
	\frac{W^{(\widetilde{P}(s-)+k)} c_{\bf q}^{-\widetilde{P}(s-)-k}}
	{W^{(\widetilde{P}(s-))} c_{\bf q}^{-\widetilde{P}(s-)}}
	\nu(k)
	\left(\frac{k}{\widetilde{P}(s-)}\right)^2 
	\right) \\
	&\le C'\E^{(\ell)}
	\left(
	\sum_{s \in[0,T] } {\bf 1}_{ \frac{\widetilde{P}(s)}{\widetilde{P}(s-)} \neq 1 }
	\sum_{1\le  k \le \vp \widetilde{P}(s-)}
	k^{-a}
	\left(\frac{k}{\widetilde{P}(s-)}\right)^2 
	\right) \qquad \text{by \eqref{eq asymptotique W} and \eqref{queue nu}}\\
	&\le \vp^{3-a} C''
	\E^{(\ell)}
	\left(
	\sum_{s \in[0,T] }{\bf 1}_{ \frac{\widetilde{P}(s)}{\widetilde{P}(s-)} \neq 1 }
	\frac{1}{\widetilde{P}(s-)^{a-1}}
	\right) \\
	&\le \vp^{3-a} C''',
\end{align*}
where $C,C',C'',C'''$ are positive constants which do not depend on $\ell, \vp$. The last inequality comes from the fact that if we denote the exponential clocks defining $\widetilde{P}$ by $\mathcal{E}_n/(2P(n)^{a-1})$ for all integer $n\ge 0$, where the $\mathcal{E}_n$ for $n\ge 0$ are i.i.d.\@ exponential random variables of parameter $1$ independent from $P$, then
\begin{align*}
	\E^{(\ell)}
	\left(
	\sum_{s \in[0,T]}
	{\bf 1}_{ \frac{\widetilde{P}(s)}{\widetilde{P}(s-)} \neq 1 }
	\frac{1}{\widetilde{P}(s-)^{a-1}}
	\right)
	&\le 
	\E^{(\ell)}
	\left(
	\sum_{n=0}^\infty
	\frac{1}{P(n)^{a-1}}
	{\bf 1}_{\sum_{k=0}^{n-1} \mathcal{E}_k/(2P(k)^{a-1}) \le T}
	\right)\\
	&= 
	\E^{(\ell)}
	\left(
	\sum_{n=0}^\infty
	\frac{\mathcal{E}_n}{P(n)^{a-1}}
	{\bf 1}_{\sum_{k=0}^{n-1} \mathcal{E}_k/(2P(k)^{a-1}) \le T}
	\right)\\
	&\le 2T +1.
\end{align*}
This ends the proof of the lemma.
\end{proof}
\subsection{Recovering the fpp geodesics from the discrete coalescing flow}\label{sous-section lien explo renversée flot}
This subsection aims at making explicit the correspondence between the discrete coalescing flow and the fpp geodesics to the root via the reversed uniform exploration introduced in Subsection \ref{sous-section explo renversee}.

Let $T>0$. Let $t_1< \ldots <t_n$ be the times of jumps of $\widetilde{P}$ in $[0,T]$. We set $t_0=0$. Let $\mathfrak{u}^n_n$ be the map made of one distinguished face of degree $2 P(n)$ glued to one hole of the same degree. For all $k \in \lb 0,n \rb$, the edges on the inner boundary of $\mathfrak{u}^n_k$ are encoded modulo $\Z$ in the clockwise order by the real numbers
$$
R_{t_k}^{(\ell)} + \frac{ i}{2P(k)}
$$
for $i \in \lb 0, 2P(k)-1 \rb $, {where the $R_t^{(\ell)}$'s where defined in Subsection \ref{sous-section flot coalescent discret}}. See Figure \ref{fig: coalescent discret}.

For all $k \in \lb 0, n-1 \rb$, we relate the edges on the inner boundary of $\mathfrak{u}^n_k$ to the edges on the inner boundary of $\mathfrak{u}^n_{k+1}$ as follows:
\begin{itemize}
	\item If $\Delta P(k)\ge 0$, we glue a new face of degree $2 \Delta P(k) +2$ to the edges encoded by
	$$
	U^{(\ell)}_{t_{k+1}}, U^{(\ell)}_{t_{k+1}} + \frac{1}{2 P(k+1)}, \ldots 
	,
	U^{(\ell)}_{t_{k+1}} + \frac{ 2 \Delta P(k)}{2P(k+1)},
	$$
	and the non-glued edge of the face is encoded by
	\begin{align*}
	U^{(\ell)}_{t_{k+1}} &+ \frac{i}{2 P(k+1)}+ g_{t_{k+1}}^{(\ell),T} \left(U^{(\ell)}_{t_{k+1}}+ \frac{i}{2 P(k+1)}, \frac{P(k+1)}{P(k)}, U^{(\ell)}_{t_{k+1}}\right) \\
	=
	&U^{(\ell)}_{t_{k+1}} + \frac{i}{2 P(k+1)} + \left( \frac{i}{2P(k+1)} - \frac{\Delta P(k)}{P(k+1)} 
	\right)_+ \frac{P(k+1)}{P(k)} - \frac{i}{2P(k+1)} \\
	&+ \frac{1}{2}
	\max\left(1- \frac{P(k)}{P(k+1)}, \left( 1 - \frac{1}{2 P(k+1)} \right)\left(1- \frac{P(k+1)}{P(k)} \right) \right)\\
	=& U^{(\ell)}_{t_{k+1}} + \frac{1}{2}\max\left(1- \frac{P(k)}{P(k+1)}, \left( 1 - \frac{1}{2 P(k+1)} \right)\left(1- \frac{P(k+1)}{P(k)} \right) \right)  \\
	=& R^{(\ell)}_{t_k},
	\end{align*}
	where the result does not depend on $i \in \lb 0, 2\Delta P(k)\rb$. Note that by this computation, the trajectories of $X^{(\ell),T}$ which were in $[U^{(\ell)}_{t_{k+1}}, U^{(\ell)}_{t_{k+1}} + \frac{ 2 \Delta P(k)}{2P(k+1)}]$ in the time interval $[T-t_{k+1}, T-t_k)$ then coalesce in $R^{(\ell)}_{t_k}$ at time $t_{k+1}$. 
	By a similar computation, the other edges of $\mathfrak{u}^n_{k+1}$, which are encoded by $U_{t_{k+1}}^{(\ell),T}+ (2\Delta P(k) + j)/(2P(k))$ modulo $\Z$ for $j \in \lb 1, 2P(k)-1 \rb$ correspond in $\mathfrak{u}^n_k$ respectively to the edges $R_{t_k}^{(\ell)}+ j/(2P(k))$ modulo $\Z$ for $j \in \lb 1, 2P(k)-1 \rb$, and the corresponding trajectories do not coalesce at time $t_{k+1}$. See Figure \ref{fig: coalescent discret}.
	\item If $\Delta P(k) <0$, then we insert the edges
	$$
	R^{(\ell)}_{t_k} - \frac{1}{2P(k)}, R^{(\ell)}_{t_k} - \frac{2}{2P(k)}, \ldots, 
	R^{(\ell)}_{t_k} - \frac{-2\Delta P(k)}{2P(k)},
	$$
	we glue the first to the last edge and we fill in the hole which is hence created with a Boltzmann random map. For all $i \in \lb 0, 2P(k+1)-1 \rb$, the edge on the inner boundary of $\mathfrak{u}^n_{k+1}$ which is encoded by $U^{(\ell)}_{t_{k+1}}  + i/(2P(k+1))$ then corresponds to the edge encoded by 
	\begin{align*}
	U^{(\ell)}_{t_{k+1}} + \frac{i}{2 P(k+1)}+ g_{t_{k+1}}^{(\ell),T} \left(U^{(\ell)}_{t_{k+1}}+ \frac{i}{2 P(k+1)}, \frac{P(k+1)}{P(k)}, U^{(\ell)}_{t_{k+1}}\right)
	&= R^{(\ell)}_{t_k} + \frac{i}{2P(k+1)} \frac{P(k+1)}{P(k)} \\
	&= R^{(\ell)}_{t_k} + \frac{i}{2P(k)} .
	\end{align*}
	Here, the trajectories do not coalesce at time $t_{k+1}$. See Figure \ref{fig: coalescent discret}.
\end{itemize}

\begin{figure}[h]
	\centering
	\includegraphics[width=1\textwidth]{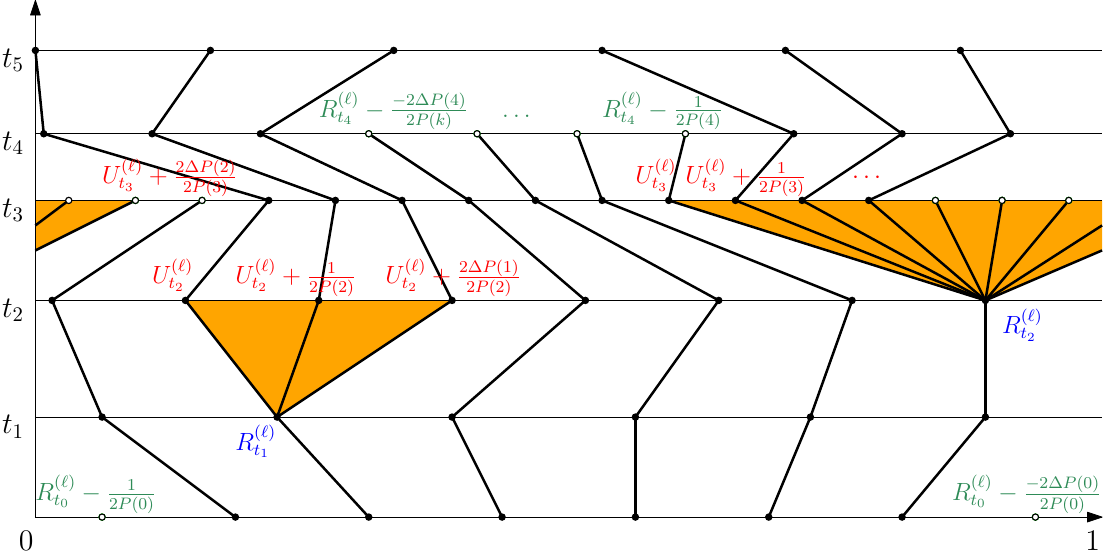}
	\caption{The embedding of the reversed uniform exploration $\mathfrak{u}^5_5 \subset \mathfrak{u}^5_4 \subset \mathfrak{u}^5_3 \subset \mathfrak{u}^5_2 \subset \mathfrak{u}^5_1 \subset \mathfrak{u}^5_0$ of the map $\mathfrak{m}$ depicted in Figure \ref{fig: explo unif renverse} into the discrete coalescing flow. Since the flow is $1$-periodic in space, we only draw what happens on $[0,1]$. The trajectories of the flow are drawn in black: for sake of visibility, they are interpolated by lines between jump times. The orange triangles mark the coalescence events which correspond to the gluing of a face. The empty dots correspond to the edges which are inserted when $\widetilde{P}$ makes a negative jump. The coordinates of the trajectories before they coalesce are in red and the coordinate of the point where they coalesce is in blue.}
	\label{fig: coalescent discret}
\end{figure}

\subsection{Scaling limit of the flow of geodesics}

In this subsection, we prove that the discrete coalescing flow of Subsection \ref{sous-section flot coalescent discret} converges to the family of coalescing diffusions with jumps introduced in Subsection \ref{sous-section coalescent continu} if one looks at a finite (or countable) number of trajectories. Let $T>0$. Let us introduce the restriction $N^T$ of $N$ to subsets of $[0,T] \times \R^*_+ \times[0,1]$. 
\begin{theorem}\label{th limite du coalescent}
For all $\ell \ge1$, let $v_1^{(\ell)}, v_2^{(\ell)}, \ldots, v_d^{(\ell)}\in \R$ and let $v_1, \ldots, v_d \in \R$ such that for all $i \in \lb 1, d \rb$, we have $
v^{(\ell)}_i \longrightarrow v_i$ as $\ell \to \infty$. Then 
$$
\left(X^{(\ell),T}_t(v^{(\ell)}_i)\right)_{1 \le i \le d, t \in [0,T]}
\mathop{\longrightarrow}\limits_{\ell \to \infty}^{(\mathrm{d})}
(X_{t}(v_i))_{1 \le i \le d, 0 \le t \le T},
$$
in the space $\mathbb{D}([0,T],\R)^d$, where each coordinate is equipped with the $J_1$ Skorokhod topology. Moreover, jointly with the above convergence, $N^{(\ell),T}$ converges in distribution towards $N^T$ for the topology of weak convergence of measures on $[0,T] \times ((0,1)\cup (1,\infty)) \times [0,1]$.
\end{theorem}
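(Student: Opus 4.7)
The plan is to prove the two convergences in tandem by truncating small jumps. Fix $\varepsilon>0$ and split the integral defining $X^{(\ell),T}_\cdot(v)$ into the contribution of jumps with $|z-1|>\varepsilon$ (only finitely many in $[0,T]$) and a remainder whose total mass is controlled in $L^2$ by Lemma \ref{lemme martingale} combined with Lemma \ref{lemme petits sauts de P}. For large jumps I will transfer convergence directly from the Poisson point process level, and for small jumps I will use the compensated-integral viewpoint and let $\varepsilon\to 0$. Uniqueness of the limiting SDE (Lemma \ref{lemme solution forte}) will identify the limit as $X$.

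\textbf{Convergence of $N^{(\ell),T}$.} By \eqref{eq cv Ptilde}, $(\widetilde{P}(t)/\ell)_{t\ge 0}$ converges in distribution to $(e^{\xi(2c_ap_{\bf q} t)})_{t\ge 0}$ in the $J_1$ topology. Hence for every $\varepsilon>0$ the point measure $\sum\delta_{(T-t,\widetilde{P}(t)/\widetilde{P}(t-))}{\bf 1}_{|\widetilde{P}(t)/\widetilde{P}(t-)-1|>\varepsilon}$ converges jointly with $\widetilde{P}$ to the Poisson point process with intensity $2c_ap_{\bf q}\,dt\,\lambda(dz){\bf 1}_{|z-1|>\varepsilon}$, since the Lévy measure of $\xi$ is the image of $\lambda$ by $x\mapsto\log x$ and a time change by the factor $2c_ap_{\bf q}$ occurs. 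As for the marks, conditionally on $\widetilde{P}$, the $V_{t}^{(\ell)}$'s are independent and uniform on a regular grid of step $1/(2\widetilde{P}(t))$ whose mesh tends to $0$, so the $U_t^{(\ell)}=\{V_t^{(\ell)}+R_t^{(\ell)}\}$ converge jointly to independent uniform random variables on $[0,1]$ (using the shift invariance of $\{\cdot\}$ modulo $1$). This yields the joint weak convergence of $N^{(\ell),T}$ to $N^T$ on $[0,T]\times((0,1)\cup(1,\infty))\times[0,1]$.

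\textbf{Convergence of the flow.} Write, for $\varepsilon>0$ fixed and $v\in\R$,
\[
X^{(\ell),T}_t(v)=v+I^{(\ell),\varepsilon}_t(v)+M^{(\ell),\varepsilon}_t(v),
\]
where $I^{(\ell),\varepsilon}$ integrates $g^{(\ell),T}_s$ against $N^{(\ell),T}$ restricted to $|z-1|>\varepsilon$ and $M^{(\ell),\varepsilon}$ is the remainder, which is a martingale by Lemma \ref{lemme martingale}. Observe that $g_t^{(\ell),T}(x,z,u)\to g(x,z,u)$ as $\ell\to\infty$ uniformly on compact subsets of $(x,z,u)$ avoiding $z=1$, since the extra term $(1-1/(2\widetilde{P}(-t)))(1-z)/2$ converges to $(1-z)/2$, so that the max matches $(1-(1/z\wedge z))/2$. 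Using Skorokhod coupling to make the convergence of $(N^{(\ell),T},\widetilde{P})$ almost sure, the piecewise constant (between its finitely many jumps) process $I^{(\ell),\varepsilon}_t(v_i^{(\ell)})$ converges jointly in $i$ to the analogous continuous-level object $I^{\varepsilon}_t(v_i)$ built from $N^T$ and $g$, including the coalescence events (two trajectories that enter the jump interval $[(z{-}1)/z{+}u,1{+}u]$ at the same large-jump time are mapped by $g$ to the same image, exactly as in the discrete flow). For the martingale remainder, the orthogonality of jumps combined with \eqref{eq majoration g discret} gives
\[
\E^{(\ell)}\sup_{t\le T}\bigl(M^{(\ell),\varepsilon}_t(v)\bigr)^{2}\le 4\,\E^{(\ell)}\!\!\sum_{s\le T}{\bf 1}_{|\widetilde{P}(s)/\widetilde{P}(s-)-1|\le\varepsilon}\,9\!\left(\frac{\Delta\widetilde{P}(s)}{\widetilde{P}(s-)}\right)^{\!2},
\]
which tends to $0$ as $\varepsilon\to 0$ uniformly in $\ell$ by Lemma \ref{lemme petits sauts de P}; the same bound with $N^T$ in place of $N^{(\ell),T}$ controls the continuous analogue. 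Letting $\varepsilon\to 0$ after passing to the limit in $\ell$ shows that any subsequential limit of $(X^{(\ell),T}_t(v_i^{(\ell)}))_{i,t}$ solves \eqref{eq EDSsauts} with initial conditions $v_i$ driven by $N^T$; strong uniqueness (Lemma \ref{lemme solution forte}) forces this limit to be $(X_t(v_i))_{i,t}$.

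\textbf{Main obstacle.} The delicate point is that $g^{(\ell),T}_t$ depends on $\widetilde{P}$ itself, so the discrete SDE is time-inhomogeneous and driven by marks that are correlated with the point measure. One therefore cannot apply directly a standard continuous-mapping argument to \eqref{eq EDSdiscret}; instead the convergence of $\widetilde{P}$, the uniform convergence of $g^{(\ell),T}_t$ to $g$ away from $z=1$, and the convergence of $N^{(\ell),T}$ must be orchestrated simultaneously in a Skorokhod coupling, with particular care to avoid the measure-zero set of times where a limiting trajectory hits a coalescence point exactly at a jump time of $N^T$. Ruling out this pathology amounts to the fact that the distribution of $X_t(v_i)$ is diffuse at each deterministic time, which can be checked from the Lévy triplet description of $X_\cdot(v_i)$ given at the end of Section \ref{sous-section coalescent continu}.
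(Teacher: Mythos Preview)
Your overall strategy coincides with the paper's: truncate at level $\varepsilon$, control small jumps by the martingale estimate (Lemma~\ref{lemme martingale} plus Lemma~\ref{lemme petits sauts de P}), pass to the limit on the finitely many large jumps via the convergence of $N^{(\ell),T}$ (your argument for this is essentially Lemma~\ref{lemme cv ppp}), and identify the limit through strong uniqueness (Lemma~\ref{lemme solution forte}). The $L^2$ bound on $M^{(\ell),\varepsilon}$ that you write is exactly \eqref{eq majoration unif de la martingale}.

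There is, however, a genuine gap in your treatment of the large-jump part. In your decomposition
\[
X^{(\ell),T}_t(v)=v+I^{(\ell),\varepsilon}_t(v)+M^{(\ell),\varepsilon}_t(v),
\]
the integrand of $I^{(\ell),\varepsilon}$ is $g^{(\ell),T}_s\bigl(X^{(\ell),T}_{s-}(v),z,u\bigr)$, which depends on the \emph{full} process $X^{(\ell),T}$, and hence on all the small jumps as well. Consequently, the assertion that ``$I^{(\ell),\varepsilon}_t(v_i^{(\ell)})$ converges jointly in $i$ to the analogous continuous-level object $I^{\varepsilon}_t(v_i)$'' does not follow from the almost-sure convergence of $(N^{(\ell),T},\widetilde P)$ alone: you would already need to know that $X^{(\ell),T}_{s-}$ converges at each large-jump time, which is precisely what you are trying to prove. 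The paper resolves this circularity with a non-obvious device, the coupling of Lemma~\ref{lemme couplage}: one builds a truncated process $\overline{X}^{(\ell),T,\varepsilon}$ driven only by large jumps but with \emph{shifted} uniform marks $\overline U^{(\ell),T,\varepsilon}_{T-t}=\{\overline X^{(\ell),T,\varepsilon}_{t-}-X^{(\ell),T}_{t-}+U^{(\ell)}_{T-t}\}$, chosen so that at every large-jump time the increment of $\overline X^{(\ell),T,\varepsilon}$ equals that of $X^{(\ell),T}$ exactly (see \eqref{eq egalite couplage}). This makes $X^{(\ell),T}-\overline X^{(\ell),T,\varepsilon}$ equal to the small-jump martingale on the nose, so that your $L^2$ bound genuinely controls the distance between $X^{(\ell),T}$ and a process that \emph{does} converge (Lemma~\ref{lemme cv du flot tronque}). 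Without this coupling, a direct comparison between $X^{(\ell),T}$ and the naive truncation $X^{(\ell),T,\varepsilon}$ would also have to absorb the discrepancy $g^{(\ell),T}_s(X^{(\ell),T}_{s-},z,u)-g^{(\ell),T}_s(X^{(\ell),T,\varepsilon}_{s-},z,u)$ at each large jump, and $g$ is not Lipschitz in $x$ (only the H\"older-type bound \eqref{eq coeff lipschitz} holds), so a Gronwall argument is not available.

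Your ``main obstacle'' paragraph correctly identifies the difficulty that $g^{(\ell),T}_t$ is time-inhomogeneous and correlated with the driving measure, and that one must avoid the null set where $x\mapsto g(x,z,u)$ fails to be continuous; these points are indeed handled in the paper's final identification step (the passage around \eqref{eq cv grands sauts}). But that identification step is carried out \emph{after} single-trajectory convergence has already been secured via Lemmas~\ref{lemme cv du flot tronque}--\ref{lemme detronque}, so that one can Skorokhod-represent $X^{(\ell_k),T}\to\check X^T$ almost surely and then pass to the limit inside the large-jump integral. In your write-up the order is reversed, and the missing ingredient is precisely the coupling of Lemma~\ref{lemme couplage} (or, alternatively, an explicit tightness argument for $X^{(\ell),T}$ in $\mathbb D([0,T],\R)$ that you do not provide).
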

In order to prove the above result, we denote by $N^T_\vp$ (resp.\@ $N_\vp$) the restriction of $N$ to subsets of $[0,T] \times (\R^*_+\setminus (1-\vp , 1+\vp)) \times [0,1]$ (resp.\@ $\R_+ \times (\R^*_+\setminus (1-\vp , 1+\vp)) \times [0,1]$). Note that the Poisson point process $N^T_\vp$ has a finite intensity. Similarly, for all $\ell \ge 1$, we denote by $N^{(\ell),T}_\vp$ the restriction of $N^{(\ell), T}$ to the subsets of $[0,T] \times (\R^*_+\setminus (1-\vp , 1+\vp)) \times [0,1]$. 

Let $X^{(\ell),T,\vp}$ be the approximation of the discrete coalescing flow defined using $N^{(\ell), T}_\vp$ by 
$$
\forall v \in \R, \forall t \in [0,T], \qquad
X^{(\ell),T,\vp}_t(v) = v+ \int_0^t \int_0^\infty \int_0^1 g_s^{(\ell),T}(X^{(\ell),T,\vp}_{s-}(v),z,u) N^{(\ell), T}_\vp(ds,dz,du),
$$
and let $X^\vp$ be the process defined using $N_\vp$ by
$$
\forall v \in \R, \forall t \in \R_+, \qquad
X^{\vp}_t(v) = v+ \int_0^t \int_0^\infty \int_0^1 g(X^{\vp}_{s-}(v),z,u) N_\vp(ds,dz,du).
$$
The above integral is actually a finite sum since $\lambda(\R_+^* \setminus(1-\vp,1+\vp))<\infty$. Moreover, the integral with respect to the compensated PPP would be the same thanks to \eqref{eq moyenne de g nulle}. We define $X^{T,\vp}$ as the restriction of $X^\vp$ to the time interval $[0,T]$. Note that $X^{T,\vp}$ can also be defined using $N^T_\vp$.

By \eqref{eq cv Ptilde}, we get the convergence of $g^{(\ell),T}_t$ towards $g$:
\begin{equation}\label{eq cv g}
	\text{Under } \P^{(\ell)}, \qquad
	\sup_{t \in [0,T], z \in \R_+^*, u \in [0,1] , x \in \R} \left\vert g^{(\ell),T}_t(x,z,u)- g(x,z,u) \right\vert 
	\le \frac{\vert1-z \vert}{\inf_{t \in [0,T]} \widetilde{P}(t)}
	\mathop{\longrightarrow}\limits_{\ell \to \infty}^{(\P)} 0.
\end{equation}

We can then state the following scaling limit result:
\begin{lemma}\label{lemme cv du flot tronque}
Let $d \ge 1$. For all $\ell \ge1$, let $v_1^{(\ell)}, v_2^{(\ell)}, \ldots, v_d^{(\ell)}\in \R$ and let $v_1, \ldots, v_d \in \R$ such that for all $i \in \lb 1, d \rb$, we have 
$
v^{(\ell)}_i \longrightarrow v_i
$ as $\ell \to \infty$. 
Then 
$$
\left(X^{(\ell),T,\vp}_t(v^{(\ell)}_i)\right)_{1 \le i \le d, t \in [0,T]}
\mathop{\longrightarrow}\limits_{\ell \to \infty}^{(\mathrm{d})}
(X^{T,\vp}_t(v_i))_{1 \le i \le d, 0 \le t \le T},
$$
in the space $\mathbb{D}([0,T],\R)^d$.
\end{lemma}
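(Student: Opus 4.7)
The plan is to argue via a Skorokhod representation, so that on a suitable probability space $N^{(\ell),T}_\vp$ converges pointwise to $N^T_\vp$. Since the limiting intensity $2c_ap_{\bf q}\,dt\,\lambda(dz)\,du$ restricted to $[0,T]\times(\R_+^*\setminus(1-\vp,1+\vp))\times[0,1]$ is finite, $N^T_\vp$ has only finitely many atoms almost surely, and both $X^{(\ell),T,\vp}$ and $X^{T,\vp}$ become pure jump processes with a bounded number of jumps on $[0,T]$. An induction on these jumps then yields the desired convergence.

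First I would establish the joint convergence $\bigl(\widetilde{P}(\cdot)/\ell,\,N^{(\ell),T}_\vp\bigr)\to\bigl(e^{\xi(2c_ap_{\bf q}\cdot)},\,N^T_\vp\bigr)$. The first coordinate is \eqref{eq cv Ptilde}. The measure $N^{(\ell),T}_\vp$ encodes the large jumps of $\widetilde P/\ell$ together with the extra mark $U_t^{(\ell)}$, so $J_1$-Skorokhod convergence of $\widetilde P/\ell$ forces convergence of the atoms' times $t^{(\ell)}_j$ and ratios $z^{(\ell)}_j:=\widetilde P(t^{(\ell)}_j)/\widetilde P(t^{(\ell)}_j{-})$. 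As for the marks, $U_t^{(\ell)}=\{V_t^{(\ell)}+R_t^{(\ell)}\}$ is, conditionally on $\widetilde P$, uniform on a grid of mesh $1/(2\widetilde P(t{-}))$, which tends to $0$ since $\widetilde P(t{-})\sim\ell\,e^{\xi(2c_ap_{\bf q} t{-})}\to\infty$; hence the marks become asymptotically independent uniform variables on $[0,1]$, independent of the limit of $\widetilde P/\ell$. By Skorokhod's representation theorem we may assume this joint convergence is almost sure.

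On this almost sure event, enumerate the atoms of $N^T_\vp$ as $(t_j,z_j,u_j)_{j=1}^k$ with $t_1<\cdots<t_k$, and match them with those of $N^{(\ell),T}_\vp$ so that $(t_j^{(\ell)},z_j^{(\ell)},u_j^{(\ell)})\to(t_j,z_j,u_j)$. Both flows are constant on each interval $[t_{j-1},t_j)$ (with the convention $t_0=0$, $t_{k+1}=T$), so it is enough to propagate convergence at the jump times. Writing $x_j^{(\ell,i)}:=X^{(\ell),T,\vp}_{t_j}(v_i^{(\ell)})$ and $x_j^{(i)}:=X^{T,\vp}_{t_j}(v_i)$, the recursion reads
\begin{equation*}
x_{j+1}^{(\ell,i)}=x_j^{(\ell,i)}+g^{(\ell),T}_{t_{j+1}^{(\ell)}}\bigl(x_j^{(\ell,i)},z_{j+1}^{(\ell)},u_{j+1}^{(\ell)}\bigr),\qquad
x_{j+1}^{(i)}=x_j^{(i)}+g\bigl(x_j^{(i)},z_{j+1},u_{j+1}\bigr),
\end{equation*}
initialized at $x_0^{(\ell,i)}=v_i^{(\ell)}\to v_i=x_0^{(i)}$. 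Together with \eqref{eq cv g}, the induction step $j\to j+1$ then reduces to continuity of $g$ at the point $(x_j^{(i)},z_{j+1},u_{j+1})$.

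The main obstacle is precisely this continuity issue: $g(\cdot,z,u)$ is discontinuous at points where $\{x-u\}\in\{0,((z-1)/z)_+\}$. I would overcome it by observing that, conditionally on the past, $u_{j+1}$ is uniform on $[0,1]$ and independent of $(x_j^{(i)},z_{j+1})$, so $\{x_j^{(i)}-u_{j+1}\}$ is also uniform on $[0,1]$; the two-point discontinuity set of $g$ in its first variable is then hit with probability zero. An induction on $j\le k$ yields almost sure convergence $(x_j^{(\ell,i)})_{1\le i\le d,\,0\le j\le k}\to(x_j^{(i)})_{1\le i\le d,\,0\le j\le k}$; combined with convergence of the jump times, this gives $J_1$-Skorokhod convergence of each coordinate, since both processes are piecewise constant step functions with a finite, common number of jumps, completing the proof.
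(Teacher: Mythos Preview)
Your proposal is correct and follows essentially the same approach as the paper, which simply states that the lemma is a direct consequence of \eqref{eq cv g}, of Lemma~\ref{lemme cv ppp} (the convergence $N^{(\ell),T}\to N^T$), and of the fact that $N^T_\vp$ has finitely many atoms. You have spelled out the details the paper omits: the Skorokhod representation, the matching of atoms, the induction on jump times, and the almost sure continuity of $g$ at the relevant points via the uniformity and independence of the $u_{j+1}$'s.
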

The above lemma is a direct consequence of \eqref{eq cv g}, of the following lemma and of the fact that $N^T_\vp$ has a finite number of atoms.
\begin{lemma}\label{lemme cv ppp}
		For every continuous function on $[0,T] \times ((0,1)\cup (1,\infty)) \times [0,1]$ with compact support, 
		$$
		\int_0^T\int_{(0,1)\cup (1,\infty)} \int_0^1 f(t,z,u)N^{(\ell),T}(dt,dz,du) \mathop{\longrightarrow}\limits_{\ell \to \infty}^{(\mathrm{d})}
		\int_0^T\int_{(0,1)\cup (1,\infty)} \int_0^1
		f(t,z,u)
		N^{T}(dt,dz,du).
		$$
		In other words, $N^{(\ell),T}$ converges in distribution towards $N^T$ for the topology of weak convergence of measures on $[0,T] \times ((0,1)\cup (1,\infty)) \times [0,1]$.
\end{lemma}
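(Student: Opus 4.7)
The plan is to exploit the Skorokhod convergence of $(\widetilde{P}(t)/\ell)_{t \in [0,T]}$ towards $(e^{\xi(2c_a p_{\bf q} t)})_{t\in[0,T]}$ established in \eqref{eq cv Ptilde}. Since $f$ is continuous with compact support in $[0,T] \times ((0,1)\cup(1,\infty)) \times [0,1]$, one can fix $\delta>0$ so that $f \equiv 0$ on $[0,T] \times (1-\delta,1+\delta) \times [0,1]$. It therefore suffices to prove the convergence of the restriction of $N^{(\ell),T}$ to the region $\{z \in \R_+^*: |z-1|\ge \delta\}$ towards the analogous restriction of $N^T$. On this region $\lambda$ has finite total mass, and almost surely only finitely many atoms of $N^T$ lie in the support of $f$.

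Next I would invoke a standard continuity result for jump measures under Skorokhod $J_1$ convergence, for instance Proposition VI.2.7 of \cite{JS87}. The limiting process $e^{\xi(2c_a p_{\bf q} \cdot)}$ is almost surely strictly positive, hence bounded below away from $0$ on $[0,T]$, and almost surely has no jump at time $T$. Combined with the Skorokhod representation theorem, this lets us work under an almost-sure convergence of $\widetilde{P}/\ell$ and conclude that the point measure of (time, ratio) of the big jumps of $\widetilde{P}/\ell$ converges to the point measure of jumps of $e^{\xi(2c_a p_{\bf q}\cdot)}$ on $[0,T]\times\{|z-1|\ge \delta\}$. Because $\Lambda$ is the image of $\lambda$ by $x\mapsto \log x$, this limit is the Poisson point process of intensity $2c_a p_{\bf q}\, dt\,\lambda(dz)$ on that region; the deterministic time-reversal $t\mapsto T-t$ preserves the law of this PPP.

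The last step is to incorporate the third coordinate $U^{(\ell)}_t$. Conditionally on $\widetilde{P}$ and on the past of the exploration up to a jump time $t$, the variable $V^{(\ell)}_t$ is uniform on $\{i/(2\widetilde{P}(t)): 0 \le i \le 2\widetilde{P}(t)-1\}$ independently of the past, so $U^{(\ell)}_t = \{V^{(\ell)}_t + R^{(\ell)}_t\}$ is uniformly distributed on a shifted discrete subset of $[0,1)$ of the same cardinality, still independent of the past. Since $\widetilde{P}(t)\to\infty$ in probability at every jump time by positivity of $e^{\xi}$, testing with the characters $u\mapsto e^{2i\pi k u}$ (whose expectation under the discrete uniform vanishes for every nonzero $k$ as soon as $2\widetilde{P}(t)>|k|$) shows that the $U^{(\ell)}_t$ at distinct jump times converge jointly to i.i.d.\@ uniforms on $[0,1]$, independently of the limiting (time, ratio) PPP. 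Combining the two steps yields the desired weak convergence $N^{(\ell),T}\to N^T$ on $[0,T]\times\{|z-1|\ge \delta\}\times[0,1]$.

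The main technical point is the use of the continuity of the jump functional: Skorokhod $J_1$ convergence does not in general imply convergence of jump measures, but the restriction to ratios bounded away from $1$ (which removes the small-jump accumulation) together with the strict positivity of the limit (which rules out any blow-up of the $\log$) is exactly what is needed to apply the classical result from \cite{JS87}. Everything else is routine bookkeeping about conditioning and convergence of discrete uniforms.
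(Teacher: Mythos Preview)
Your proposal is correct and follows essentially the same route as the paper's proof: deduce convergence of the point measure of (time, ratio) from the Skorokhod convergence \eqref{eq cv Ptilde}, use positivity of the limit (equivalently, tightness of $\ell/\inf_{[0,T]}\widetilde{P}$) to pass from jumps to ratios and to make $\widetilde{P}(t)\to\infty$, and then argue that the discrete uniforms $U^{(\ell)}_t$ converge to i.i.d.\ $[0,1]$-uniforms independently of the first two coordinates. The paper compresses all this into two sentences; your version spells out the reference to \cite{JS87} for the jump-measure continuity and the character argument for the uniforms, but the substance is the same.
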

\begin{proof}
The convergence \eqref{eq cv Ptilde} implies that 
$$
\sum_{t\ge 0; \ \Delta \widetilde{P}(t)>0} \delta_{(t,\widetilde{P}(t)/\widetilde{P}(t-))} 
\mathop{\longrightarrow}\limits_{\ell \to \infty}^{(\mathrm{d})}
\mathrm{PPP}\left(2 c_a p_{\bf q}dt  \lambda(dz)\right),
$$
for the topology of weak convergence of measures on $\R_+ \times (\R_+^*\setminus \left\{1\right\})$. This concludes the proof by reversing time and using that $\ell/\inf_{[0,T]} \widetilde{P}$ under $\P^{(\ell)}$ as $\ell \to \infty$ is tight by \eqref{eq cv Ptilde} so that the random variables $U^{(\ell)}_t$ converge in distribution towards independent uniform random variables in $[0,1]$. 
\end{proof}

Next, we can approximate $X^{(\ell),T}$ using an appropriate coupling with $X^{(\ell),T,\vp}$.
\begin{lemma}\label{lemme couplage}
For every sequence of real numbers $(v^{(\ell)})_{\ell \ge 1}$, there exist couplings of $(X^{(\ell),T}_t(v^{(\ell)}))_{t \in [0,T]}$ with $(X^{(\ell),T,\vp}_t(v^{(\ell)}))_{t \in [0,T]}$ for all $\ell \ge 1$ and for all $\vp >0$, denoted by $X^{(\ell),T}(v^{(\ell)}),\overline{X}^{(\ell),T,\vp}(v^{(\ell)})$ such that
$$
\sup_{\ell \ge 1} \E^{(\ell)} 
\left(\sup_{t\in [0,T]} 
\left|
X^{(\ell),T}_t(v^{(\ell)})
-
\overline{X}^{(\ell),T,\vp}_t(v^{(\ell)})
\right|^2
\right)
\mathop{\longrightarrow}\limits_{\vp \to 0} 0.
$$
\end{lemma}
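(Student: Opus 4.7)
The plan is to take the natural coupling in which both $X^{(\ell),T}(v^{(\ell)})$ and $X^{(\ell),T,\vp}(v^{(\ell)})$ are driven by the same random counting measure $N^{(\ell),T}$, with $X^{(\ell),T,\vp}$ only reacting to atoms whose $z$-coordinate lies outside $(1-\vp,1+\vp)$. Setting $D_t := X^{(\ell),T}_t(v^{(\ell)}) - X^{(\ell),T,\vp}_t(v^{(\ell)})$, I decompose $D_t = M^{(\ell),T,\vp}_t + B_t$, where $M^{(\ell),T,\vp}$ is the small-jump martingale from Lemma \ref{lemme martingale} and
$$
B_t = \int_0^t \int_{z\notin(1-\vp,1+\vp)} \int_0^1 \left[g_s^{(\ell),T}(X^{(\ell),T}_{s-},z,u) - g_s^{(\ell),T}(X^{(\ell),T,\vp}_{s-},z,u)\right] N^{(\ell),T}(ds,dz,du)
$$
collects the big-jump discrepancies.

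For the small-jump martingale, I would combine Doob's maximal inequality, the pointwise bound $|g_s^{(\ell),T}|\le 3|z-1|$ from \eqref{eq majoration g discret}, and Lemma \ref{lemme petits sauts de P}, to obtain
$$
\sup_{\ell\ge 1}\E\sup_{t\le T}|M_t^{(\ell),T,\vp}|^2 \le 36\,\sup_{\ell\ge 1}\E\sum_{\substack{s\in[0,T]\\ \widetilde{P}(s)/\widetilde{P}(s-)\in[1-\vp,1+\vp]\setminus\{1\}}}\left(\frac{\Delta\widetilde{P}(s)}{\widetilde{P}(s-)}\right)^{2}\mathop{\longrightarrow}\limits_{\vp\to 0} 0.
$$
For the big-jump part, the zero-mean identity \eqref{eq moyenne de g nulle discret} applied to the adapted difference integrand (as in the proof of Lemma \ref{lemme martingale}) ensures that $B$ is also an $\mathcal{F}^{(\ell),T}$-martingale, and its $L^2$-norm is controlled by its quadratic variation; combining this with the $L^2$-Lipschitz bound \eqref{eq coeff lipschitz} from the proof of Lemma \ref{lemme solution forte} (which remains uniform in $\vp$ when restricted to the region $z\notin(1-\vp,1+\vp)$) yields $\E|B_T|^2 \le C\int_0^T \E\min(|D_s|,1)\,ds$.

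The main difficulty is that these two estimates combine into a Bihari-type integral inequality for $\psi(t):=\E\sup_{s\le t}|D_s|^2$ of the form $\psi(t)\le c_\vp + C\int_0^t \sqrt{\psi(s)}\,ds$, which only gives $\sqrt{\psi(T)}\le \sqrt{c_\vp} + CT/2$ and hence does not vanish as $\vp\to 0$. To close this gap I would refine the coupling at the large-jump atoms: replace the uniform coordinate $U^{(\ell)}_{T-s}$ used by $X^{(\ell),T,\vp}$ by a shifted version $U^{(\ell)}_{T-s}-\langle D_{s-}\rangle$, where $\langle\cdot\rangle$ denotes rounding to the nearest multiple of $1/(2\widetilde{P}(T-s))$. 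Since $g_s^{(\ell),T}(x,z,u)$ depends on $x-u$ modulo $1$, this shift aligns the two big-jump increments so that $D$ is unchanged by such a jump up to a discretization error of order $|z-1|/\widetilde{P}(T-s)$; summing these errors using the finiteness of $\int(z-1)^2\lambda(dz)$ together with the tightness of $\ell/\inf_{[0,T]}\widetilde{P}$ provided by \eqref{eq cv Ptilde} then completes the argument.
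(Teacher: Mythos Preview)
Your proposal arrives at the correct key idea --- shifting the uniform coordinate so as to align the big-jump increments --- but only after a detour through the naive coupling and a Bihari inequality that you correctly note does not close. The paper goes directly to the shifted coupling and executes it more cleanly: instead of rounding, it shifts $U^{(\ell)}_{T-t}$ by the \emph{exact} current difference $\overline{X}^{(\ell),T,\vp}_{t-}(v^{(\ell)}) - X^{(\ell),T}_{t-}(v^{(\ell)})$. Since $g^{(\ell),T}_s(x,z,u)$ depends on $(x,u)$ only through $\{x-u\}$, this makes the big-jump increments of the two coupled processes \emph{identically equal} (this is equation \eqref{eq egalite couplage}), so the difference $D_t$ is \emph{exactly} the small-jump martingale $M^{(\ell),T,\vp}_t$ of Lemma~\ref{lemme martingale}. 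Doob's $L^2$ maximal inequality, the bound \eqref{eq majoration g discret}, and Lemma~\ref{lemme petits sauts de P} then finish the proof in one line; there is no $B$-term, no Bihari step, and no discretization error to sum.

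Your rounding is therefore an avoidable complication. The sketch you give for controlling the resulting discretization error is also loose: the pointwise Lipschitz constant of $y\mapsto g(\cdot,z,\cdot)$ in $y=\{x-u\}$ is $\max(|z-1|,1)$ when $z>1$ (the map has slope $-1$ on an interval of length $(z-1)/z$), not $|z-1|$, so the per-jump error is not generically of order $|z-1|/\widetilde P(T-s)$. Getting a bound that is simultaneously uniform in $\ell\ge 1$ and vanishes as $\vp\to 0$ would need more than ``summing via $\int(z-1)^2\lambda(dz)<\infty$ and tightness of $\ell/\inf\widetilde P$'': the number of large jumps behaves like $\vp^{1-a}$ while the tightness only buys $1/\ell$, and these do not combine to something uniformly small.
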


\begin{proof}
Let $(v^{(\ell)})_{\ell \ge 1}$ be a sequence of real numbers. The coupling of the two processes is defined using the same perimeter process $P$, the same exponential random variables for $\widetilde{P}$ but the random variables $\overline{U}^{(\ell),T}$ defining $\overline{X}^{(\ell),T,\vp}_t(v^{(\ell)})$ are defined inductively by the relations
\begin{align}
\overline{U}_{T-t}^{(\ell),T,\vp}&= \left\{ \overline{X}^{(\ell),T,\vp}_{t-}(v^{(\ell)}) - X^{(\ell),T}_{t-}(v^{(\ell)}) + U^{(\ell)}_{T-t}\right\}, \\
\overline{N}^{(\ell),T,\vp}_{\vert [0,t]\times (\R^*_+\setminus \{1\})\times [0,1]} 
&= \sum_{s\in [T-t,T]; \ \widetilde{P}(s)/\widetilde{P}(s-) \in \R_+^*\setminus (1-\vp, 1+\vp)} \delta_{(T-s,\widetilde{P}(s)/\widetilde{P}(s-), \overline{U}^{(\ell),T,\vp}_{s})}, \\
\overline{X}^{(\ell),T,\vp}_t(v^{(\ell)}) &= v+ \int_0^t \int_0^\infty \int_0^1 g^{(\ell),T}_s(\overline{X}^{(\ell),T,\vp}_{s-}(v^{(\ell)}),z,u) \overline{N}^{(\ell), T}_\vp(ds,dz,du),
\end{align}
for all $t\in [0,T]$ such that $\Delta \widetilde{P}(t)\neq 0$. Then one can see that $\overline{X}^{(\ell),T,\vp}(v^{(\ell)})$ has the same law as ${X}^{(\ell),T,\vp}(v^{(\ell)})$ and that by construction, for all $t\in [0,T]$ such that $\Delta \widetilde{P}(t)\neq 0$, for all $z \in \R_+^*\setminus \left\{1\right\}$, 
\begin{equation}\label{eq egalite couplage}
g^{(\ell),T}_t(\overline{X}^{(\ell),T,\vp}_{t-}(v^{(\ell)}), z, \overline{U}_{T-t}^{(\ell),T,\vp}) = g^{(\ell),T}_t(X^{(\ell),T}_{t-}(v^{(\ell)}), z, U^{(\ell)}_{T-t}).
\end{equation}
Using the coupling, we see that
\begin{align*}
\E^{(\ell)} 
&\left(\sup_{t\in [0,T]} 
\left|
X^{(\ell),T}_t(v^{(\ell)})
-
\overline{X}^{(\ell),T,\vp}_t(v^{(\ell)})
\right|^2
\right)
\\
=
&\E^{(\ell)} 
\left(\sup_{t\in [0,T]} 
\left|
\int_0^t \int_{1-\vp}^{1+\vp} \int_0^1
g^{(\ell),T}_s(X^{(\ell),T}_{s-}(v^{(\ell)}),z,u)
N^{(\ell),T}(ds, dz, du)
\right|^2
\right)
\qquad
\text{by } \eqref{eq egalite couplage}.
\end{align*}
Moreover, thanks to Lemma \ref{lemme martingale}, the process
$$
(M_t^{(\ell),T,\vp})_{t \in [0,T]}
=
\left(
\int_{0}^t
\int_{1-\vp}^{1+\vp}
\int_0^1
g^{(\ell),T}_s(X^{(\ell),T}_{s-}(v^{(\ell)}),z,u) {N}^{(\ell),T} (ds, dz, du)
\right)_{t \in [0,T]}
$$
is a martingale. So by Doob's martingale inequality,
$$
\E^{(\ell)}
\left(
\sup_{t \in [0,T]}
(M^{(\ell),T,\vp}_t)^2
\right)
\le  4 \sup_{t \in [0,T]} \E^{(\ell)} \left( (M^{(\ell),T,\vp}_t)^2\right).
$$
Moreover, by \eqref{eq majoration g discret}, one can {bound from above} for all $t \in [0,T]$,
\begin{align*}
\E^{(\ell)} \left( (M^{(\ell),T,\vp}_t)^2 \right)
&=\E^{(\ell)} \left(\int_{0}^t \int_{1-\vp}^{1+\vp} \int_0^1
g^{(\ell),T}_s(X^{(\ell),T}_{s-}(v^{(\ell)}),z,u)^2 
{N}^{(\ell),T}(ds,dz,du)
\right) \enskip\text{(quadratic variation)}\\
&\le 9\E^{(\ell)} \left(\int_{0}^t \int_{1-\vp}^{1+\vp} \int_0^1
\vert z-1\vert^2 
{N}^{(\ell),T}(ds,dz,du)
\right) \\
&\le 9 \E^{(\ell)}
\left(
\sum_{s \in[0,T] } {\bf 1}_{ \widetilde{P}(s)/\widetilde{P}(s-) \in [1-\vp,1+\vp]\setminus \{1\} }
\left(\frac{\Delta \widetilde{P}(s)}{\widetilde{P}(s-)}\right)^2 
\right).
\end{align*}
Using Lemma \ref{lemme petits sauts de P} one concludes that
\begin{equation}\label{eq majoration unif de la martingale}
	\sup_{\ell \ge 1} \sup_{t \in [0,T]} \E^{(\ell)} \left( (M^{(\ell),T,\vp}_t)^2\right)
	\mathop{\longrightarrow}\limits_{\vp \to 0} 0.
\end{equation}
This ends the proof.
\end{proof}
 We will need this last lemma:
 \begin{lemma}\label{lemme detronque}
 	We have for all $v_1, \ldots, v_d \in \R$, 
 	$$
 	\left(X_t^{\vp}(v_i)\right)_{ 1 \le i \le d, t\ge 0} \mathop{\longrightarrow}\limits_{\vp \to 0}^{(\mathrm{d})}
 	\left(X_t (v_i)\right)_{1 \le i \le d, t \ge 0}
 	$$ in the space $\mathbb{D}(\R_+, \R)^d$.
 \end{lemma}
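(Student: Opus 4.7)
The plan is to prove tightness of the family $(X^\vp_\cdot(v_1),\ldots,X^\vp_\cdot(v_d))_{\vp>0}$ in $\mathbb{D}(\R_+,\R)^d$ and to identify any subsequential limit with $(X_\cdot(v_i))_{i=1}^d$ through the martingale problem for the generator of the joint Markov process, using the uniqueness provided by Lemma \ref{lemme solution forte}. A direct $L^2$ comparison of $X^\vp$ with $X$ coupled through the same $N$ does not close through the standard Gronwall–Bihari scheme, because the Lipschitz-type bound \eqref{eq coeff lipschitz} is only linear in $|x-y|$ (a typical Yamada–Watanabe/monotone situation), so the tightness-plus-uniqueness route is more tractable.

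For tightness on a compact interval $[0,T]$, each coordinate $X^\vp_\cdot(v_i)$ is a square-integrable martingale, namely a compensated stochastic integral against $N_\vp$. The Itô isometry combined with $|g(0,z,u)|\le 3|z-1|$ from \eqref{eq majoration g} yields, for every stopping time $\tau\le T$ and every $\delta>0$, the uniform bound
\[
\E\bigl[(X^\vp_{\tau+\delta}(v_i)-X^\vp_\tau(v_i))^2\bigr] \;\le\; 18\, c_a p_{\bf q}\, \delta \int_0^\infty (z-1)^2\,\lambda(dz),
\]
the integral being finite by \eqref{eq integrale coeff au carre finie}. Together with a uniform $L^2$-bound for $X^\vp_t(v_i)$ on $[0,T]$ proved in the same way, Aldous's criterion gives tightness of each marginal in $\mathbb{D}([0,T],\R)$, and tightness of the joint vector in $\mathbb{D}([0,T],\R)^d$ follows from the product topology.

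To identify the subsequential limit, for every $f\in C_c^2(\R^d)$, Itô's formula applied to the SDE \eqref{eq EDSsauts} driven by $N_\vp$ shows that
\[
M^{f,\vp}_t \;\coloneqq\; f\bigl(X^\vp_t(v_1),\ldots,X^\vp_t(v_d)\bigr)-f(v_1,\ldots,v_d)-\int_0^t L^\vp f(X^\vp_s)\,ds
\]
is a martingale, where, setting $G(x,z,u)=(g(x_i,z,u))_{i=1}^d$,
\[
L^\vp f(x) \;=\; 2c_a p_{\bf q}\!\!\int_{(0,\infty)\setminus(1-\vp,1+\vp)}\!\!\int_0^1 \Bigl[f(x+G(x,z,u))-f(x)-\sum_{i=1}^d g(x_i,z,u)\,\partial_i f(x)\Bigr]\lambda(dz)\,du.
\]
The subtracted linear term has zero $u$-integral by \eqref{eq moyenne de g nulle}, so this representation of $L^\vp f$ agrees with the naive one; its role is to make the integrand bounded by $C_f|G|^2\le 9C_f(z-1)^2$ via a second-order Taylor estimate, uniformly in $x$. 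Dominated convergence then gives $L^\vp f\to Lf$ uniformly on $\R^d$, with $Lf$ defined analogously using the full measure $\lambda(dz)\,du$. Passing to the limit in the identity for $M^{f,\vp}$ via a Skorokhod representation shows that every subsequential limit $Y$ satisfies the martingale problem for $L$ started from $(v_1,\ldots,v_d)$.

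Finally, Lemma \ref{lemme solution forte} yields pathwise uniqueness of the strong solution of \eqref{eq EDSsauts}; applied coordinatewise, this gives uniqueness in law of the joint process $(X_\cdot(v_i))_{i=1}^d$ driven by $N$, hence uniqueness of the martingale problem for $L$. Every subsequential limit therefore has the law of $(X_\cdot(v_i))_{i=1}^d$, giving convergence in $\mathbb{D}([0,T],\R)^d$ for every $T>0$ and thus in $\mathbb{D}(\R_+,\R)^d$. The main obstacle is the limit passage in $\int_0^t L^\vp f(X^\vp_s)\,ds$: one must combine the uniform convergence $L^\vp f\to Lf$ with the Skorokhod convergence of $X^\vp$ along the subsequence, which is handled by boundedness and continuity of $Lf$ together with the fact that the cadlag limit has only countably many discontinuity times, so $X^\vp_s\to Y_s$ holds Lebesgue-almost everywhere almost surely.
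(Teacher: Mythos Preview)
Your argument is correct and follows essentially the same strategy as the paper: tightness of $(X^\vp)_\vp$ plus identification of any subsequential limit via the martingale problem, with uniqueness supplied by the strong uniqueness of Lemma~\ref{lemme solution forte}. The paper packages this by invoking Theorem~IX.4.15 of \cite{JS87} and verifying its hypotheses (second-moment bounds on the kernels $K^{d,\vp}$ and $K^d$, continuity of $x\mapsto\int f\,dK^d(x,\cdot)$, locally uniform convergence $K^{d,\vp}\to K^d$, and the uniqueness--measurability hypothesis IX.4.3), whereas you carry out the same scheme by hand via Aldous's criterion and the generators $L^\vp\to L$. The one step you leave implicit---that every solution of the $L$-martingale problem can be represented as a weak solution of the joint SDE driven by some Poisson random measure, so that pathwise uniqueness indeed yields uniqueness of the martingale problem---is standard (e.g.\ the canonical representation in \cite{JS87} or \cite{IW89}) and is precisely what the paper's cited hypothesis IX.4.3 encodes.
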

 \begin{proof}
 	It is a direct consequence of Theorem 4.15 of Chapter IX of \cite{JS87}. Indeed, for all $v_1, \ldots, v_d \in \R$, the process $(X_t^{\vp}(v_i))_{ i \in \lb 1,d\rb, t\ge 0}$ (resp.\@ $(X_t^{\vp}(v_i))_{ i \in \lb 1,d \rb, t\ge 0}$) is the time homogeneous jump diffusion starting from $(v_1,\ldots, v_d)$ with kernel $K^{d,\vp}$ (resp.\@ $K^d$) defined by the fact that for all $f \in C_1(\R^d)$, for all $x \in \R^d$,
 	$$
 	\int_{\R^d} f(y) K^{d,\vp}(x , dy )
 	=
 	2c_a p_{\bf q}\int_{(0,\infty) \setminus(1-\vp,1+\vp)} \lambda(dz) \int_{0}^1 du f(g(x_1,z,u), \ldots ,g(x_d,z,u)),
 	$$
 	and respectively
 	$$
 	\int_{\R^d} f(y) K^{d}(x , dy )
 	=
 	2c_a p_{\bf q}\int_{(0,\infty)\setminus\{1\}} \lambda(dz) \int_0^1du f(g(x_1,z,u), \ldots ,g(x_d,z,u)).
 	$$
 	We denote by $\vert \cdot \vert_\infty$ the infinite norm on $\R^d$. Then, by \eqref{eq integrale coeff au carre finie}, for all $x\in \R^d$,
 	$$
 	\int_{\R^d} \vert y \vert_\infty^2 K^{d,\vp}(x,dy) \le \int_{\R^d} \vert y \vert_\infty^2 K^d(x,dy)
 	\le 9  \int_{(1/2,1) \cup (1,\infty)} \lambda(dz) \vert z-1 \vert^2 <\infty,
 	$$
 	and
 	$$
 	\sup_{x \in \R^d} \int_{\R^d} \vert y \vert_\infty^2{\bf 1}_{\vert y \vert_\infty>b} K^d(x,dy)
 	\le 9 \int_{(1/2,1) \cup (1,\infty)} \lambda(dz) \vert z-1 \vert^2 {\bf 1}_{3\vert z-1 \vert >b}
 	\mathop{\longrightarrow}\limits_{b\to \infty} 0.
 	$$
 	
 	{Furthermore}, for all $x' \in \R^d$ for $\lambda(dz) du$ almost every $(z,u) \in \R_+^* \times [0,1]$, the function $x\mapsto (g(x_i,z,u))_{1 \le i \le d}$ is continuous at $x'$, so that by dominated convergence, for all $f \in C_1(\R^d)$,
 	$$
 	\int_0^\infty \lambda(dz) \int_0^1du f(g(x_1, z, u), \ldots, g(x_d,z,u))
 	\mathop{\longrightarrow}\limits_{x\to x'}
 	\int_0^\infty \lambda(dz) \int_0^1du f(g(x'_1, z, u), \ldots, g(x'_d,z,u)),
 	$$
 	where we use the domination
 	$$
 	f(g(x_1, z, u), \ldots, g(x_d,z,u)) \le {\bf 1}_{3\vert z -1 \vert >r} \sup (f),
 	$$
 	where $r>0$ is such that $\{y \in \R^d; \ \vert y \vert_\infty <r\}\subset \R^d \setminus \mathrm{Supp}(f)$. Therefore, the function $x\mapsto \int_{\R^d} K^d(x,dy)f(y)$ is continuous. By the same reasoning, it is also true for $x\mapsto \int_{\R^d} K^{d,\vp}(x,dy)f(y)$.
 	
 	Furthermore, by the Beppo Levi lemma, for all $x\in \R^d$, for all $f\in C_1(\R^d)$,
 	$$
 	\int_{\R^d} f(y)K^{d,\vp}(x,dy) \mathop{\longrightarrow}\limits_{\vp \to 0}
 	\int_{\R^d} f(y)K^d(x,dy),
 	$$
 	and this convergence holds locally uniformly on $x$ by Dini's theorem. Moreover, by \eqref{eq majoration g}, we have for all $i,j \in \lb 1, d \rb$, uniformly on $x$,
 	$$
 	\left\vert \int_{\R^d} y_iy_jK^{d,\vp}(x,dy) - \int_{\R^d} y_i y_j K^d (x,dy)\right\vert \le \int_{1-\vp}^{1+\vp} \lambda(dz)\int_0^1 du \max_{1 \le i \le d} g(x_i,z,u)^2\le 9 \int_{1-\vp}^{1+\vp} \vert 1-z\vert^2dz,
 	$$
 	which goes to zero as $\vp \to 0$. Finally, the last condition to check before applying Theorem 4.15 of Chapter IX is the ``uniqueness-measurability hypothesis'' 4.3 of Chapter IX. It is a consequence of the existence and uniqueness of strong solutions to \eqref{eq EDSsauts} which is provided by Lemma \ref{lemme solution forte}.
 \end{proof}

\begin{proof}[Proof of Theorem \ref{th limite du coalescent}]
	By combining the results of Lemmas \ref{lemme cv du flot tronque}, \ref{lemme couplage} and \ref{lemme detronque}, we obtain that for every converging sequence $v^{(\ell)} \to v \in \R$,
	$$
	\text{Under the law }\P^{(\ell)}, \qquad
	\left( X^{(\ell),T}_t(v^{(\ell)}) \right)_{t \in [0,T]}
	\mathop{\longrightarrow}\limits_{\ell \to \infty}^{(\mathrm{d})}
	\left(
	X_t(v)
	\right)_{t\in [0,T]}.
	$$
	Moreover, by Lemma \ref{lemme cv ppp}, we know that jointly with \eqref{eq cv Ptilde},
	$$
	\text{Under the law }\P^{(\ell)}, \qquad
	N^{(\ell),T}
	\mathop{\longrightarrow}\limits_{\ell \to \infty}^{(\mathrm{d})}
	N^T
	$$
	for the topology of weak convergence of measures on $[0,T]\times(\R^*_+\setminus\{1\}) \times [0,1]$. 
	
	Assume that, along some subsequence $(\ell_k)_{k\ge 0}$, we have 
	\begin{equation}\label{eq cv X N P}
	\text{Under }\P^{(\ell_k)}, \qquad
	\left(X^{(\ell_k),T}(v^{(\ell_k)}), N^{(\ell_k),T}, \frac{\widetilde{P}}{\ell_k}
	\right)
	\mathop{\longrightarrow}\limits_{k \to \infty}^{(\mathrm{d})}
	\left(\check{X}^T(v), N^T, \left(\exp(\xi_a(2 c_a p_{\bf q}t))\right)_{t\ge 0} 
	\right),
	\end{equation}
	where $\check{X}^T(v)$ has the same law as $X^T(v)$. Then, to end the proof of Theorem \ref{th limite du coalescent}, it suffices to show that 
	\begin{equation}\label{eq identification de la limite}
	\forall t \in [0,T], \qquad \check{X}^T_t(v) = v + \int_0^t \int_0^\infty \int_0^1 g(\check{X}^T_{s-}(v), z,u)\widetilde{N}^T(ds,dz,du),
	\end{equation}
	where $\widetilde{N}^T$ is the compensated Poisson process associated with $N^T$.
	By Skorokhod's representation theorem, we assume that \eqref{eq cv X N P} holds almost surely. For all $\ell\ge 1$, let $\psi_\ell$ be an increasing  homeomorphism from $[0,T]$ to $[0,T]$ such that
	\begin{equation}\label{eq cv X unif}
	\sup_{t \in [0,T]} \left\vert 
	X^{(\ell_k), T}_t(v^{(\ell_k)}) - \check{X}^T_{\psi_{\ell_k}(t)}(v)
	\right\vert
	+\sup_{t \in[0,T]} \left\vert \psi_{\ell_k}(t)-t \right\vert
	\mathop{\longrightarrow}\limits_{k\to \infty}^{(\mathrm{a.s.})} 0
	\end{equation}
	Using \eqref{eq cv g} and the fact that $(N^{(\ell),T}([0,T]\times ((1/2,1-\vp)\cup(1+\vp,\infty))\times[0,1]))_{\ell \ge 1}$ is bounded for all $\vp >0$, we have for all $t\in [0,T]$, for all $\vp>0$, a.s.\@ along the subsequence $(\ell_k)_{k\ge 0}$
	\begin{align*}
		\int_0^t &\int_{(1/2,1-\vp)\cup(1+\vp, \infty)} \int_0^1
		\left( g^{(\ell),T}_s(X^{(\ell),T}_{s-}(v^{(\ell)}), z, u) 
		-
		g(\check{X}^{T}_{\psi_\ell(s)-}(v), z, u)\right) N^{(\ell),T}(ds,dz,du) \\
		&= \int_0^t\int_{(1/2,1-\vp)\cup(1+\vp, \infty)} \int_0^1
		\left(g(X^{(\ell),T}_{s-}(v^{(\ell)}), z, u) -g(\check{X}^T_{\psi_\ell(s)-}(v), z, u) \right)N^{(\ell),T}(ds,dz,du)
		+o(1).
	\end{align*}
	and by taking \eqref{eq cv X unif} into account, using that for all $x_0 \in \R$, for $\lambda(dz) du$ almost all $(z,u) \in \R_+^* \times [0,1]$, the function $x\mapsto g(x,z,u)$ is continuous at $x_0$, we deduce by dominated convergence (with domination by a constant) that along $(\ell_k)_{k\ge 0}$,
	$$
	\int_0^t \int_{(1/2,1-\vp)\cup(1+\vp, \infty)} \int_0^1
	\left( g^{(\ell),T}_s(X^{(\ell),T}_{s-}(v^{(\ell)}), z, u) 
	-
	g(\check{X}^{T}_{\psi_\ell(s)-}(v), z, u)\right) N^{(\ell),T}(ds,dz,du)
	\mathop{\longrightarrow}\limits_{\ell \to \infty}^{(\mathrm{a.s.})}
	0.
	$$
	In addition to that, thanks to the convergence of $N^{(\ell),T}$ towards $N^T$ (and since $\lambda$ has no atom{s}), we have the convergence of finite sums for all $t \ge 0$.
	$$
	\int_0^t \int_{\R^*_+\setminus(1-\vp,1+\vp)}\int_0^1 g(\check{X}^T_{s-}(v), z,u) N^{(\ell),T}(ds,dz,du)
	\mathop{\longrightarrow}\limits_{\ell \to \infty}^{(\mathrm{a.s.})}
	\int_0^t \int_{\R^*_+\setminus(1-\vp,1+\vp)}\int_0^1 g(\check{X}^T_{s-}(v), z,u) N^T(ds,dz,du).
	$$
	Thus, 
	\begin{align}
	\int_0^t &\int_{\R^*_+\setminus(1-\vp,1+\vp)}\int_0^1 g_s^{(\ell),T}({X}^{(\ell),T}_{s-}(v^{(\ell)}), z,u) N^{(\ell),T}(ds,dz,du)\notag\\
	&\mathop{\longrightarrow}\limits_{\ell \to \infty}^{(\mathrm{a.s.})}
	\int_0^t \int_{\R^*_+\setminus(1-\vp,1+\vp)}\int_0^1 g(\check{X}^T_{s-}(v), z,u) N^T(ds,dz,du).\label{eq cv grands sauts}
	\end{align}
	Moreover, by \eqref{eq majoration unif de la martingale}, 
	\begin{equation}\label{eq majoration uniforme des petits sauts}
	\sup_{\ell\ge 1} \E^{(\ell)} \left( 
	\left\vert
	\int_0^t \int_{1-\vp}^{1+\vp} \int_0^1
	g^{(\ell),T}_s(X^{(\ell),T}_{s-}(v^{(\ell)}), z,u) N^{(\ell),T}(ds,dz,du)
	\right\vert^2\right)
	\mathop{\longrightarrow}\limits_{\vp \to 0} 0.
	\end{equation}
	{Furthermore}, using \eqref{eq moyenne de g nulle}, one can see that 
	we know that for all $t\ge 0$,
	$$
	\int_0^t \int_{\R^*_+\setminus(1-\vp,1+\vp)}\int_0^1 g(\check{X}^T_{s-}(v), z,u) N^T(ds,dz,du)=
	\int_0^t \int_{\R^*_+\setminus(1-\vp,1+\vp)}\int_0^1 g(\check{X}^T_{s-}(v), z,u) \widetilde{N}^T(ds,dz,du).
	$$
	Finally, we have the convergence of martingale
	$$
	\int_0^t \int_{\R^*_+\setminus(1-\vp,1+\vp)}\int_0^1 g(\check{X}^T_{s-}(v), z,u) \widetilde{N}^T(ds,dz,du)
	\mathop{\longrightarrow}\limits_{\vp \to 0}^{(\mathrm{L}^2)}
	\int_0^t \int_{\R^*_+\setminus\{1\}}\int_0^1 g(\check{X}^T_{s-}(v), z,u) \widetilde{N}^T(ds,dz,du).
	$$
	Combining this with \eqref{eq cv grands sauts} and \eqref{eq majoration uniforme des petits sauts} gives \eqref{eq identification de la limite}. This concludes the proof.
\end{proof}

\section{Scaling limit of the tree of geodesics}\label{section arbre geodesiques}
In this section, we define a random infinite countable metric space $\T_a$ associated with the stochastic flow arising from the coalescing diffusions with jumps and we prove that it is the scaling limit of the tree of fpp geodesics of random maps of type $a \in (3/2,5/2)$.

\subsection{The tree of the positive jumps via the stochastic flow}\label{sous-section definition de la limite}
Let $N$ be a PPP on $(-\infty,0]\times (\R^*_+\setminus \{1\}) \times [0,1]$ of intensity $dt \lambda(dz)  du$. For all $v\in \R$ and $s\le 0$, let $(X_{s,t}(v))_{v \in \R,  t  \in [s,0]}$ be the solution starting from $v \in \R$ at time $s\le 0$ of the SDE
\begin{equation}\label{eq EDSsauts flot}
	\forall t \in [s,0],  \qquad
	X_{s,t}(v) = v+ \int_s^t \int_0^\infty \int_0^1 g(X_{s,t'-}(v),z,u) \widetilde{N}(d{t'},dz,du),
\end{equation}
where $\widetilde{N}$ is the compensated PPP associated with $N$. Note that $(X_{s,t}(v))_{v \in \R, s \le t \le 0}$ is a stochastic flow, and more precisely a Lévy flow in the terminology of \cite{App09} Section 6.4.1 thanks to Lemma \ref{lemme solution forte}. It is also easy to see that $X$ is a stochastic coalescing flow on the torus $\R/\Z$ in the sense that for all $s_1,s_2< 0$, for all $v_1,v_2 \in \R$, we have $\P(\exists t \in [s_1 \vee s_2, 0], \ \{X_{s_1,t}(v_1)\}=  \ \{X_{s_2,t}(v_2)\})>0$ and if we set $T= \inf \{ t\in [s_1 \vee s_2, 0], \ \{X_{s_1,t}(v_1)\}=  \ \{X_{s_2,t}(v_2)\} \}$ then on the event $\{T<\infty\}$, for all $t \in [T,0]$, we have $ \{X_{s_1,t}(v_1)\}=  \ \{X_{s_2,t}(v_2)\} $.

Let $\mathbb{U}= \bigcup_{n\ge 0} \N^n$ be the Ulam tree, where $\N^0$ has only one element which is the empty word $\emptyset$. For all $v,w\in \U$, we write $v\prec w$ if $v$ is a prefix of $w$. Let $N_w$ for $w \in \mathbb{U}$ be independent copies of $N$. Let $\partial$ be a cemetery point. For convenience, we will also denote $N_w$ in the form $(Z_t^w,U_t^w)_{t\le 0}$, where $Z_t^w = \partial$ when there is no atom of $N_w$ at time $t$, so that
$$
N_w =\sum_{t\le 0} {\bf 1}_{Z_t^w \neq \partial} \delta_{(t,Z^w_t, U^w_t)}.
$$ 
For all $w \in \mathbb{U}$, $s\le 0$ and $v\in \R$, let $\left(X^w_{s,t}(v) \right)_{t \in [s,0]}$ be the solution of the SDE \eqref{eq EDSsauts flot} where $N$ is replaced by $N_w$. By analogy with the discrete coalescing flow, for all $t \le 0$ such that $Z^w_t \neq \partial$ we set
$$
R^w_t \coloneqq U^w_t + g(U^w_t, Z^w_t, U^w_t)
$$
and when $Z^w_t = \partial$ we set $ R^w_t  = \partial$. Then $R^w_t$ represents the position at which the trajectories which were in $[U^w_t, U^w_t + (Z^w_t-1)/Z^w_t]$ at time $t-$ coalesce when $Z^w_t>1$ and $R^w_t$ represents the position on the right of the empty hole of size $1-Z^w_t$ created when $Z^w_t<1$.

Let $(\mathcal{X}_w(t), b_w)_{w \in \U, t\ge 0}$ be the cell system which is constructed as in Section 2.2 of \cite{BBCK} as follows using the $N_w$'s: 
\begin{itemize}
	\item For all $w\in \U$, let $\xi_w$ be the L\'evy process with no Brownian part, whose jumps are given by the Poisson point process $\Delta \xi_w$ on $\R_+\times \R^*$ which is defined as the image measure of $N_w$ by $(t,z,u)\mapsto (-t, \log z)$, i.e. $(\Delta \xi_w(t))_{t\ge 0} = (\log Z^w_{-t})_{t\ge 0}$, and whose drift is given by \eqref{eq drift}.
	\item We set $\mathcal{X}_\emptyset = \exp(\xi_\emptyset)$, with birth time $b_\emptyset =0$. %Let $(x_i)_{i\in \N}$ be the collection of negative jumps of $\mathcal{X}_\emptyset$, ordered in decreasing order of absolute value. For all $i \in \N$ we set $\mathcal{X}_i =\vert x_i \vert \exp(\xi_i)$. 
	\item Assume by induction that we have defined $\mathcal{X}_w$ and its birth time $b_w$ for some $w \in \U$. Let $(x_{wi})_{i\in \N}$ be the collection of negative jumps of $\mathcal{X}_w$, ordered in decreasing order of absolute value. Let $(\beta_{wi})_{i\ge 1}$ be the times at which they occur. For all $i \in \N$ we set $\mathcal{X}_{wi} =\vert x_{wi} \vert \exp(\xi_{wi})$, where $wi$ is the finite sequence obtained by adding $i$ at the end of the sequence $w$. We define the birth time of the cell $wi$ by setting $b_{wi}= b_w+\beta_{wi}$. 
\end{itemize}

For all $w\in \U$, let $(\Delta_{wi})_{i\in \N}$ be the collection of the positive jumps of $\mathcal{X}_{w}$ in decreasing order, with by convention $\Delta_\emptyset = 1$ and let $(\tau_{wi})_{i\in \N}$ be the times at which they occur. We define the times $(t_w)_{w\in \U}$ by setting $t_\emptyset =0$ and for all $w \in \U$, for all $i \in\N$, 
$$
t_{wi} = b_w+\tau_{wi}.
$$

When $a\neq 2$, it is useful to introduce the Lamperti transformed birth times and jump times $(\widetilde{b}_w)_{w \in \U}$, $(\widetilde{t}_w)_{w \in \U}$, $(\widetilde{\beta}_w)_{w \in \U \setminus \{\emptyset\}}$ and $(\widetilde{\tau}_w)_{w \in \U\setminus\{\emptyset\}}$ by setting for all $w\in \U$, for all $i \in \N$,
$$
\widetilde{\beta}_{wi} = \int_0^{\beta_{wi}} e^{(2-a) \xi_w(t)} dt 
\qquad
\text{and}
\qquad
\widetilde{\tau}_{wi} = \int_0^{\tau_{wi}} e^{(2-a) \xi_w(t)} dt,
$$
then setting $\widetilde{t}_\emptyset = \widetilde{b}_\emptyset=0$ and for all $w \in \U$, for all $i \in \N$,
$$
\widetilde{b}_{wi} = \widetilde{b}_w + \widetilde{\beta}_{wi} \qquad
\text{and}
 \qquad
\widetilde{t}_{wi} = \widetilde{b}_w+ \widetilde{\tau}_{wi}.
$$

Let $\mathcal{T}_a$ be the countable metric space whose underlying set is $
\mathcal{T}_a\coloneqq
\U$. 
Each $w\in \T_a$ corresponds to either a positive jump $\Delta_w$ when $w\neq \emptyset$ or to the root. To each $w = i_1\cdots i_k \in \mathcal{T}_a\setminus \{\emptyset \}$, we associate a trajectory $(Y^{z,w})_{z\prec w}$ in the Lévy flows $X^z$ for $z\prec w$. Let $\emptyset  = w_0\prec w_1 \prec \ldots \prec w_k = w$ be the prefixes of $w$, so that for all $j\in \lb 0,k-1\rb$, we have $w_{j+1} = w_j i_{j+1}$. The idea is to follow the trajectory starting from the jump $\Delta_w$ (which corresponds to a coalescence event) in the Lévy flow $X^{w_{k-1}}$, and when we reach time $0$ we continue the trajectory in the flow $X^{w_{k-2}}$ starting from the negative jump arising at time $-\beta_{w_{k-1}}$ in this flow, etc. 

More precisely, we set
$$
\left(Y^{w_{k-1},w}_t\right)_{-\tau_w\le t \le 0}\coloneqq 
\left(X^{w_{k-1}}_{-\tau_w,t}(R^{w_{k-1}}_{-\tau_w}))\right)_{ -\tau_w\le t\le0}.
$$
Note that the starting point of the above process corresponds to the point where previous trajectories of the flow $X^{w_{k-1}}$ coalesce due to the positive jump $\Delta_w$. See the left-hand side of Figure \ref{fig: trajectoire}.

Next, for all $j \in \lb 1, k-1\rb$, we define inductively
$$
(Y^{w_{j-1},w}_t)_{-\beta_{w_j}\le t \le 0} 
\coloneqq \left(X^{w_{j-1}}_{-\beta_{w_j},t}\left(
R^{w_{j-1}}_{-\beta_{w_j}}-(1- Z^{w_{j-1}}_{-\beta_{w_j}}) + (1-Z^{w_{j-1}}_{-\beta_{w_j}})Y^{w_j,w}_0
\right)\right)_{ -\beta_{w_j}\le t\le0}.
$$
Here, the starting point corresponds to a point chosen in the ``gap'' created by the negative jump $x_{w_j}$, and it is chosen according to the ending point $Y^{w_j,w}_0$ of the previous trajectory. See the right-hand side of Figure \ref{fig: trajectoire}.

\begin{figure}[h]
	\centering
	\includegraphics[width=1\textwidth]{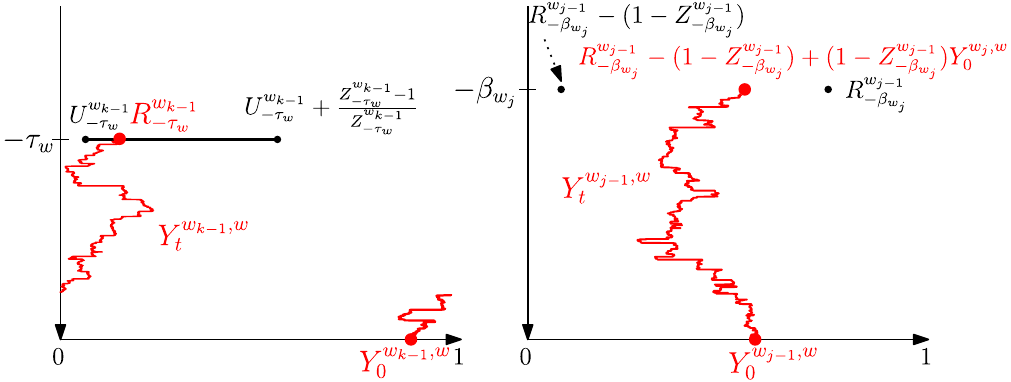}
	\caption{The trajectory associated with a positive jump $\Delta_w$. The trajectory is in red and is represented in $[0,1]$ since the flow is $1$-periodic in space. On the left, the trajectory starts at the point $R^{w_{k-1}}_{-\tau_w}$ where there is a coalescence in the flow at time $-\tau_w$ due to the positive jump. The segment of the positions at time $(-\tau_w)-$ which coalesce in $R^{w_{k-1}}_{-\tau_w}$ is drawn in black. The first part of the trajectory then ends at $Y_0^{w_{k-1},w}$. On the right the next parts of the trajectories for $j \in \lb 0, k-1 \rb$ start at a position corresponding to the end of the previous part of the trajectory in the ``gap'' of size $1-Z^{w_{j-1}}_{-\beta_{w_j}}$ created at time $-\beta_{w_j}$.} 
	\label{fig: trajectoire}
\end{figure}

Let us denote by $\mathcal{J}_a$ the (random) set of sequences of positive jumps $(\Delta_{wi_n})_{n\ge 0}$ such that the sequence of times of jump $\tau_{wi_n}$ is increasing and converges as $n\to \infty$. For all $u = (wi_n)_{n\ge 0} \in \mathcal{J}_a$, we define $\tau_u$ (resp. $t_u$, $\widetilde{\tau}_u$, $\widetilde{t}_u$) as the limit of $\tau_{wi_n}$ (resp. $t_{wi_n}$, $\widetilde{\tau}_{wi_n}$, $\widetilde{t}_{wi_n}$) as $n \to \infty$. 

According to the value of $a$, we describe differently the way two trajectories coalesce modulo $\Z$.
\begin{lemma}\label{lemme coalescence trajectoires}
	Let $v\neq w \in  \mathcal{T}_a \setminus \{\emptyset\}$. Let $z$ be a common prefix of $v$ and $w$. Assume that $\{Y_{t_0}^{z,v}\}\neq \{Y_{t_0}^{z,w}\}$ for some fixed $t_0<0$. Let
	$$
	\sigma \coloneqq \inf\{t\in [t_0,0], \ \{Y_t^{z,v}\} = \{Y_t^{z,w}\}\},
	$$
	with $\inf \emptyset = \infty$ by convention. Then, on the event $\sigma <\infty$,
	\begin{itemize}
		\item If $a \in (3/2,2)$, then a.s.\@ $\Delta \xi_z (-\sigma) >0$.
		\item If $a \in [2,5/2)$, then there exists a sequence of positive jumps $u=(\Delta_{zi_n})_{n\ge 0}\in \mathcal{J}_a$ such that $\tau_u = -\sigma$ and such that for all $n \ge 0$, we have $\{Y^{z,w}_{-\tau_{zi_n}-}-U^z_{-\tau_{zi_n}}\} \le (Z^z_{-\tau_{zi_n}}-1)/Z^z_{-\tau_{zi_n}}$.
	\end{itemize}
\end{lemma}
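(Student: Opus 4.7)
My plan is to analyze the mechanism by which two distinct trajectories of the flow $X^z$ can coalesce on the circle $\R/\Z$. I would first establish three structural facts about a single L\'evy flow. Between jump times of $N_z$, each trajectory $Y^{z,\cdot}$ is constant, so coalescence can only occur at jump times of $N_z$ or as an accumulation thereof. At a jump with parameters $(Z^z_t,U^z_t)=(z,u)$, the map $x\mapsto x+g(x,z,u)$ is piecewise affine on the circle: on an arc of length $((z-1)/z)_+$ it collapses everything to $R^z_t$, and on its complement it acts with slope $z$. For $z>1$ the non-collapse branch has slope $>1$ and so cannot identify two distinct points modulo $\Z$; for $z<1$ the map is locally contracting but identifies two prescribed distinct points only on a Lebesgue-null set of $u$'s. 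Combining this with the independence and uniformity of the $U^z_t$'s along the Poisson atoms (a sequential conditioning / Fubini argument on $N_z$) shows that almost surely no coalescence occurs at negative jumps, and any coalescence at a single positive jump requires both trajectories to lie in the collapse arc.

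The dense case $a\in(3/2,2)$ would then be settled by the integrability
$$
\int_{1}^{\infty}\frac{z-1}{z}\,\lambda(dz)<\infty,
$$
which holds because $\lambda(dz)$ behaves like $(z-1)^{-a}\,dz$ near $z=1$ and $a-1<1$. The expected number of positive jumps in $[t_0,0]$ whose collapse arc catches $Y^{z,w}$ is $\vert t_0\vert\int(z-1)/z\,\lambda(dz)$, and is therefore finite; hence almost surely only finitely many such catches occur, and similarly for $Y^{z,v}$. Together with the structural facts above, this forces $\sigma$ to be an atom of $N_z$ at which both trajectories lie in the collapse arc, giving $\Delta\xi_z(-\sigma)>0$. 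A ``continuous'' coalescence, in which the gap process would smoothly limit to an integer at $\sigma$ without an actual jump of $N_z$ there, must be ruled out by conditioning on the positions at the finitely many catching jumps and arguing that, in the residual uniform variables $U^z_t$, the event of such a smooth limit is Lebesgue-null.

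In the dilute case $a\in[2,5/2)$ the same integral diverges because $(z-1)^{1-a}$ is not integrable near $z=1$ for $a\ge 2$. A standard second-moment / reverse Borel--Cantelli argument then shows that in every interval $(\sigma,\sigma+\delta)$ there are almost surely infinitely many positive jumps of $N_z$ catching $Y^{z,w}$. Since on $(\sigma,0]$ the two trajectories already coincide by definition of $\sigma$, any such jump catches $Y^{z,w}$ automatically. Extracting a subsequence of positive jump times of the cell $\mathcal{X}_z$ with real times strictly decreasing to $\sigma$ then produces $(\tau_{zi_n})_{n\ge 0}$ increasing to $-\sigma$, and hence a sequence $u=(\Delta_{zi_n})_{n\ge 0}\in\mathcal{J}_a$ with $\tau_u=-\sigma$ satisfying the catching condition $\{Y^{z,w}_{-\tau_{zi_n}-}-U^z_{-\tau_{zi_n}}\}\le(Z^z_{-\tau_{zi_n}}-1)/Z^z_{-\tau_{zi_n}}$.

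The hardest step, I expect, is the rigorous exclusion of continuous coalescence in the dense case: infinitely many jumps of $N_z$ accumulate near $\sigma$, and although the gap process has finite variation (since $\int(z-1)\,\lambda(dz)<\infty$ for $a<2$), one must show it cannot smoothly approach an integer value at $\sigma$. The strategy would be to freeze the large-scale data, namely the (almost surely finite) collection of catching jumps and associated positions, and then use the independent uniform $U^z_t$'s at the remaining non-catching jumps, together with a countable union over the possible patterns of catching jumps, to conclude that the bad event has probability zero.
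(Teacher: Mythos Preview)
Your overall architecture matches the paper's: the case split on $a$, the integrability threshold $\int_1^\infty \frac{z-1}{z}\,\lambda(dz)$ being finite for $a<2$ and infinite for $a\ge 2$, Borel--Cantelli in each direction, and the extraction of an accumulating sequence of catching jumps in the dilute case. For $a\in[2,5/2)$ your argument is essentially the paper's: Campbell's theorem gives $\sum_{t}{\bf 1}_{Z^z_t>1}\frac{Z^z_t-1}{Z^z_t}=\infty$, and the conditional (second) Borel--Cantelli lemma, using that each $U^z_t$ is uniform given the past, makes the catching set dense; one then extracts the sequence $(\Delta_{zi_n})$.

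Where you diverge from the paper is the dense case $a\in(3/2,2)$, and specifically the step you flag as hardest: ruling out a ``continuous'' coalescence. The paper does this in one line, and the mechanism is much simpler than the measure-theoretic conditioning you propose. Since $\int_{(1/2,\infty)\setminus\{1\}}|z-1|\,\lambda(dz)<\infty$ when $a<2$, one has a.s.\ $\sum_{t\in[t_0,0]}|Z^z_t-1|<\infty$, and hence
\[
\sum_{t:\,1/2<Z^z_t<1}\bigl|\log Z^z_t\bigr|<\infty,
\qquad\text{so}\qquad
\prod_{t:\,1/2<Z^z_t<1} Z^z_t>0.
\]
Between two consecutive catching times $s_i<s_{i+1}$, every jump of $N_z$ acts on the pair $(\{Y^{z,v}\},\{Y^{z,w}\})$ through the same affine branch of $x\mapsto x+g(x,z,u)$; the torus distance is therefore multiplied by a factor at least $Z^z_t$ at each jump (and at least $1$ when $Z^z_t>1$, since both points sit on the expanding branch). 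Consequently the distance at any time $t\in(s_i,s_{i+1})$ is bounded below by $\bigl(\prod_{s_i<s<t,\,Z^z_s<1}Z^z_s\bigr)$ times the distance at $s_i$, a strictly positive number. This simultaneously excludes coalescence at a negative jump and any ``smooth'' shrinking to zero, with no need to freeze data or invoke Lebesgue-null events in the $U^z_t$'s. You already note that the gap process has finite variation; the point is simply to turn that into the multiplicative lower bound above rather than to attempt a measure-zero argument.
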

\begin{proof}
	Let us start with the case $a \in (3/2, 2)$. Note that $\int_0 x^{1-a} dx <\infty$ and $\int^\infty x^{1-2a} <\infty$, so that 
	$$\int_{(1/2,\infty)\setminus\{1\}} \vert x-1 \vert \lambda(dx) <\infty. $$ 
	Therefore, almost surely we have 
	\begin{equation}\label{eq somme des sauts finie}
		\sum_{t \in [t_0,0]} {\bf 1}_{Z^z_t \neq 1}\left\vert {Z^{z}_t -1} \right\vert <\infty.
	\end{equation}
	By {the} Borel-Cantelli lemma, we deduce that almost surely there is a finite number of times $t\in [t_0,0]$ such that $Z^z_t>1$ and $\{Y^{z,w}_t-U^z_t\} \le (Z^z_t-1)/Z^z_t$ or $\{Y^{z,v}_t-U^z_t\} \le (Z^z_t-1)/Z^z_t$. We denote those times by $s_1< \ldots <s_N$
	Furthermore, by \eqref{eq somme des sauts finie}, we have a.s.
	$$
	\sum_{t \in [t_0, 0]} {\bf 1}_{1/2< Z^z_t <1} \log Z^z_t >- \infty,
	$$
	so that between two consecutive coalescence times $s_{i}<s_{i+1}<0$ such that $\{Y^{z,w}_{s_i}\} \neq \{Y^{z,v}_{s_i}\}$, the distance between $Y^{z,w}_t$ and $Y^{z,v}_t$ in the torus $\R/\Z$ cannot converge to zero since it is {bounded from below} by $\prod_{s_i<t<s_{i+1}; \ 1/2<Z_t^z<1}  Z^z_t>0$ times the distance between $\{Y^{z,w}_{s_i}\}$ and $\{Y^{z,v}_{s_i}\}$ in $\R/\Z$. Thus, the coalescence can only take place during a time $s_i$ for some $i \in \lb 1, N \rb$. This means that $Z^z_{\sigma} >1$, i.e.\@ $\Delta \xi_z(-\sigma) >0$.

	Now let us treat the case $a \in [2,5/2)$. Note that $\int_0 x^{1-a} dx =\infty$ and hence
	$$\int_{(1,\infty)} \frac{x-1}{x} \lambda(dx) =\infty. $$ Therefore, by Campbell's theorem, a.s.\@
	$$\sum_{t \in [t_0,0]}  {\bf 1}_{Z^{z}_t>1}\frac{Z^{z}_t -1}{Z^{z}_t} =\infty.$$
	By the second Borel-Cantelli lemma, using the independence of the $U_t^{z}$'s, we deduce that a.s.\@ the set of times $t\in [\sigma,0]$ such that $Z^z_t>1$ and $\{Y^{z,w}_{t-}-U^z_t\} \le (Z^z_t-1)/Z^z_t$ is dense in $[\sigma,0]$. In particular, there exists a sequence $(\Delta_{zi_n})_{n\ge 0}\in \mathcal{J}_a$ of positive jumps such that $\tau_{zi_n} \uparrow -\sigma$ as $n\to \infty$ and such that for all $n \ge 0$, we have $\{Y^{z,w}_{-\tau_{zi_n}-}-U^z_{-\tau_{zi_n}}\} \le (Z^z_{-\tau_{zi_n}}-1)/Z^z_{-\tau_{zi_n}}$. 
\end{proof}

For all $v, w\in \mathcal{T}_a$, their nearest common ancestor $c(v,w)$ in $\mathcal{T}_a\cup \mathcal{J}_a$ is defined as follows:
\begin{itemize}
	\item If $v=\emptyset$ or $w=\emptyset$, then $c(v,w)=\emptyset$.
	\item If $v=w$, then $c(v,w)=v=w$.
	\item If $v\neq w \in \mathcal{T}_a\setminus \{\emptyset\}$, then let $z$ be their largest common prefix. Let $\emptyset=z_0\prec z_1\prec \ldots \prec z_k = z$ be the sequence of prefixes of $z$. Let
	$$
	\sigma_{v,w}(z_k)\coloneqq \inf\left\{ t \le 0, \ Y^{z_k,v}_t \text{ and } Y^{z_k,w}_t \text{ are defined and }
	\{Y^{z_k,v}_t\}
	=
	\{Y^{z_k,w}_t\}
	  \right\}.
	$$
	If $\sigma_{v,w}(z_k)<\infty$, then the trajectories $Y^{z_k,w}$ and $Y^{z_k,v}$ coalesce at time $\sigma_{v,w}(z_k)$. We distinguish two subcases:
	\begin{itemize}
		\item If $a\in (3/2,2)$, then by Lemma \ref{lemme coalescence trajectoires}, we have $\Delta \xi_{z_k} (- \sigma_{v,w}(z_k))>0$, so that there exists a unique $i\in \N$ such that $\Delta \mathcal{X}_{z_k}({-\sigma_{v,w}(z_k)}) = \Delta_{z_ki}$. Then we set $c(v,w)= z_k i$. 
		\item Otherwise, when $a \in [2,5/2)$, by Lemma \ref{lemme coalescence trajectoires} there exists a sequence $(\Delta_{z_{k}i_n})_{n\ge 0}\in \mathcal{J}_a$ of positive jumps such that $\tau_{z_ki_n} \uparrow -\sigma_{v,w}(z_k)$ as $n\to \infty$ and such that for all $n \ge 0$, we have $\{Y^{z_k,w}_{-\tau_{z_ki_n}-}-U^{z_k}_{-\tau_{z_ki_n}}\} \le (Z^{z_k}_{-\tau_{z_ki_n}}-1)/Z^{z_k}_{-\tau_{z_ki_n}}$. Then we set $c(v,w) = (\Delta_{z_{k}i_n})_{n\ge 0}$.
	\end{itemize}
	When $\sigma_{v,w}(z_k) = \infty$, we set 
	$$
	\sigma_{v,w}(z_{k-1})\coloneqq \inf\left\{ t \le 0, \
	\{Y^{z_{k-1},v}_t\}
	=
	\{Y^{z_{k-1},w}_t\}
	  \right\}
	$$
	and we do as above. If $\sigma_{v,w}(z_{k-1})<\infty$, then the trajectories $Y^{z_{k-1},w}$ and $Y^{z_{k-1},v}$ coalesce at time $\sigma_{v,w}(z_{k-1})$. Then when $a\in (3/2,2)$, we have $\Delta \xi_{z_{k-1}}({-\sigma_{v,w}(z_{k-1})}) > 0$, so that there exists a unique $i\in \N$ such that $\Delta \mathcal{X}_{z_{k-1}}({-\sigma_{v,w}(z_{k-1})}) = \Delta_{z_{k-1}i}$. Then we set $c(v,w)= z_{k-1} i$. When $a \in [2,5/2)$ there exists a sequence $(\Delta_{z_{k-1} i_n})_{n\ge 0}$ such that $\tau_{z_{k-1}i_n} \uparrow -\sigma_{v,w}(z_{k-1})$ as $n \to \infty$ and such that for all $n \ge 0$, we have $\{Y^{z_{k-1},w}_{-\tau_{z_{k-1}i_n}-}-U^{z_{k-1}}_{-\tau_{z_{k-1}i_n}}\} \le (Z^{z_{k-1}}_{-\tau_{z_{k-1}i_n}}-1)/Z^{z_{k-1}}_{-\tau_{z_{k-1}i_n}}$. Then we set $c(v,w) = (\Delta_{z_{k-1}i_n})_{n\ge 0}$, etc.  Finally, if for all $j \in \lb 0,k\rb$ the trajectories do not coalesce, i.e.\@ $\sigma_{v,w}(z_j)=\infty$ (this happens with positive probability), then we set $c(v,w)=\emptyset$.
\end{itemize}
The space $\mathcal{T}_a$ is equipped with the distance $d_{\mathcal{T}_a}$ defined as follows. 

For all $w \in \mathcal{J}_a \cup \mathcal{T}_a$, we set
\begin{equation}\label{eq hauteur sur T}
d_{\mathcal{T}_a}(\emptyset, w) \coloneqq \widetilde{t}_w
%\left(\sum_{i= 0}^{k-2} \int_0^{\beta_{w_{i+1}}} e^{(2-a) \xi_{w_i}(s)} ds\right)+ \int_0^{\tau_w} e^{(2-a) \xi_{w_{k-1}}(s)} ds.
\end{equation}
Note that in the case $a=2$, the above expression can also be written $d_{\mathcal{T}}(\emptyset, w)= t_w$. Besides, for future use, we stress that, thanks to the absence of atom{s} in the intensity of the PPPs $N_w$, the function $w \mapsto d_{\mathcal{T}}(\emptyset, w)$ is a.s.\@ injective.
For all $w, v \in \mathcal{T}_a $, we set
\begin{equation}\label{eq distance sur T}
d_{\mathcal{T}_a}(w,v) \coloneqq d_{\mathcal{T}_a}(\emptyset, w) +d_{\mathcal{T}_a}(\emptyset, v)-2d_{\mathcal{T}_a}(\emptyset, c(v,w)).
\end{equation}
It is easy to check that $d_{\mathcal{T}_a}$ is a distance on $\mathcal{T}_a$. Indeed, positivity and symmetry are obvious. For the triangle inequality, by \eqref{eq distance sur T} it suffices to check that for all $u,v,w \in \mathcal{T}_a$, we have
$$
d_{\T_a} (\emptyset, v) + d_{\T_a}(\emptyset ,c(u,w)) \ge d_{\T_a}(\emptyset,c(u,v)) + d_{\T_a}(\emptyset, c(v,w)).
$$
This holds clearly since:
\begin{itemize}
\item If the trajectories of $u$ and $v$ coalesce before those of $u$ and $w$, then $c(v,w) = c(u,w)$, and one can see that $d_{\T_a}(\emptyset, v) \ge d_{\T_a}(\emptyset , c(u,v))$;
\item Otherwise, $d_{\T_a}(\emptyset, c(u,w)) \ge d_{\T_a}(\emptyset, c(u,v))$ and one can see that $d_{\T_a}(\emptyset, v) \ge d_{\T_a}(\emptyset , c(v,w))$.
\end{itemize}

\subsection{The tree of geodesics via the discrete coalescing flow}

We recall the branching peeling exploration of a map $\mathfrak{m}$ via discrete cell-systems from \cite{BCM,BBCK}. One can then perform the time-reversed uniform exploration on each branch via the discrete coalescing flow thanks to the encoding presented in Subsection \ref{sous-section lien explo renversée flot}.

The branching peeling exploration of a map $\mathfrak{m}$, contrary to the filled in peeling exploration, does not fill in the holes which are created when we identify two edges (i.e.\@ when the perimeter process has a negative jump) but instead starts another process which explores this hole. Thanks to the spatial Markov property, under $\P^{(\ell)}$, this new exploration is independent conditionally on the perimeter of the hole and has the same law as the exploration of a Boltzmann map of perimeter corresponding to the perimeter of the hole. {In each of these explorations, we use the uniform peeling exploration recalled in Subsection \ref{sous-section explo uniforme}.}

Under $\P^{(\ell)}$, the perimeter processes of this branching exploration are described by a cell-system $((P_w(n))_{n \ge 0})_{w \in \U}$ which is defined as follows: $P_\emptyset = P$ is the perimeter process of the exploration of $\mathfrak{m}$ following the locally largest component. Recall that it is a Markov chain whose probability transitions are given by \eqref{eq probas de transition perimetre fini}. We define the birth-time of the cell $\emptyset$ by $B_\emptyset =0$. Then, assume inductively that $(P_w(n))_{n\ge 0}$ and its birth-time $B_w$ are defined for some $w \in \U$. Let $\ell_{wi}$ for $i\ge 1$ be the sizes in absolute value of the negative jumps, ranked in non-increasing order and let $(n_{wi})_{i \ge 1}$ be the times at which they occur. For all $i\ge 1$ we define $(P_{wi}(n))_{n\ge 0}$ as the exploration of the independent Boltzmann map of law $\P^{(\ell_{wi})}$ which fills the hole created by the associated negative jump. Then conditionally on the $\ell_{wi}$'s (and on all the branches $P_{w'}$ for $w'\in \U$ such that $w$ is not a strict prefix of $w'$), the processes $(P_{wi}(n))_{n\ge 0}$ for $i \ge 1$ are independent Markov chains whose transitions are given by \eqref{eq probas de transition perimetre fini}. This property is called the branching Markov property. For all $i\ge 1$, birth-time of the cell $wi$ is defined by $B_{wi} =  B_w +n_i$.

On each branch corresponding to a cell $P_w$ for $w \in \U$, we define the associated discrete coalescing flow which gives the fpp geodesics in $\mathfrak{m}$. Let us denote by $(\mathcal{E}^w_n)_{n\ge 0}$ the i.i.d.\@ exponential random variables arising in the uniform peeling exploration of the map filling in the corresponding hole (note that $\mathcal{E}^\emptyset_n = \mathcal{E}_n$ for all $n \ge 0$). Let $(\widetilde{P}_w(t))_{t\ge 0}$ be the continuous time process obtained by waiting a time $\mathcal{E}^w_n/{(2P_w(n)^{a-1})}$ at $P_w(n)$ before jumping at $P_w(n+1)$ for all $n\ge 0$. We define $U^{(\ell),w}_{t}, g^{(\ell), w}_t$  as in Subsection \ref{sous-section flot coalescent discret} using $\widetilde{P}_w$ in place of $\widetilde{P}$. We define the random measure
$$
N_w^{(\ell)}= \sum_{t \ge 0  } {\bf 1}_{\Delta \widetilde{P}_w(t) \neq 0}\delta_{(-t,\widetilde{P}_w(t)/\widetilde{P}_w(t-), U^{(\ell),w}_t)}.
$$
 We denote by $(X^{(\ell),w}_{s,t}(v))_{v \in \R, s\le t \le 0}$ the discrete coalescing flow defined by setting
 $$
 \forall v \in \R, \ \forall s \le t \le 0, \qquad \qquad
 X^{(\ell),w}_{s,t} (v)= v
 +\int_s^t g_{s'}^{(\ell),w} ( X^{(\ell),w}_{s,s'}(v), z, u) N^{(\ell)}_w(ds',dz,du).
 $$
 Notice that at each time of creation of a hole, conditionally on the already explored region and on the fpp lengths of the inner dual edges, the dual edges on the boundary of the hole are i.i.d.\@ exponential random variables of parameter one. %Moreover, since the map which fills the hole is a random Boltzmann map, it is invariant in law by translating the root edge on the boundary of the hole. Therefore, for every $w \in \U $ and $i \in \N$, we glue the map associated with $P_{wi}$ to the hole created by the $i$-th largest negative jump of $P_w$ in a position uniformly at random. We denote by 
 Therefore, the fpp geodesic in the map filling the hole from a face $f$ to the boundary of the hole is a part of the fpp geodesic from $f$ to $f_r$ in $\mathfrak{m}$.
 
 For all $w \in \U$, for all $i \in \N$, let $\partial \mathfrak{h}_w$ be the boundary of the hole corresponding to the cell $P_w$ and let $f_{wi}$ be the face created by the $i$-th largest positive jump of $P_w$ which occurs at time $m_i$. By convention, we set $f_\emptyset= f_r$. We can thus record the fpp distances
 \begin{equation}\label{eq distance fpp cellule}
 d^\dagger_{\mathrm{fpp}} (\partial \mathfrak{h}_w, \partial \mathfrak{h}_{wi})
 = \sum_{n=0}^{n_i -1} \frac{\mathcal{E}^w_n}{2P_w(n)}
 \qquad\text{and}
 \qquad
 d^\dagger_{\mathrm{fpp}} (\partial \mathfrak{h}_w,  f_{wi})
 = \sum_{n=0}^{m_i -1} \frac{\mathcal{E}^w_n}{2P_w(n)}
 \end{equation}
 and then we have
 \begin{equation}\label{eq distance fpp racine recurrence}
 d^\dagger_{\mathrm{fpp}} (f_r, \partial \mathfrak{h}_{wi})
 =d^\dagger_{\mathrm{fpp}} (f_r, \partial \mathfrak{h}_{w}) + d^\dagger_{\mathrm{fpp}} (\partial \mathfrak{h}_w, \partial \mathfrak{h}_{wi})
 \quad \text{and}
 \quad
 d^\dagger_{\mathrm{fpp}} (f_r, f_{wi})
 =d^\dagger_{\mathrm{fpp}} (f_r, \partial \mathfrak{h}_{w})
 +d^\dagger_{\mathrm{fpp}} (\partial \mathfrak{h}_w, f_{wi}).
 \end{equation}
 We also denote the associated times of negative and positive jumps in $\widetilde{P}_w$ by
 $$
 \beta^{(\ell)}_{wi}
 \coloneqq \sum_{n=0}^{n_i -1} \frac{\mathcal{E}^w_n}{2P_w(n)^{a-1}}
 \qquad\text{and}
 \qquad
 \tau^{(\ell)}_{wi}
 \coloneqq \sum_{n=0}^{m_i -1} \frac{\mathcal{E}^w_n}{2P_w(n)^{a-1}}
 $$
 and we set
 $$
 b^{(\ell)}_{wi}
 \coloneqq b^{(\ell)}_w + \beta^{(\ell)}_{wi}
 \quad \text{and}
 \quad
 t^{(\ell)}_{wi}
 \coloneqq b^{(\ell)}_{w}
 + \tau^{(\ell)}_{wi}.
 $$
 In order to understand the distances between two faces in the tree of geodesics to the root, we describe the trajectories associated with the fpp geodesics in the discrete coalescing flows. Let $w = i_1 \cdots i_k \in \U \setminus \{\emptyset\}$. Let $\emptyset = w_0 \prec w_1 \prec \ldots \prec w_k = w$ be the prefixes of $w$. The face $f_w$ corresponds to the $i_k$-th largest positive jump of $\widetilde{P}_{w_{k-1}}$. We follow the fpp geodesic from $f_w$ to the root $f_r$ in the flows $X^{(\ell),w_k},X^{(\ell),w_{k-1}}, \ldots , X^{(\ell),w_0}$. More precisely, we set
 $$
 (Y^{(\ell), w_{k-1}, w}_t)_{-\tau^{(\ell)}_w \le t \le 0} \coloneqq 
 \left(
 X^{(\ell),w_{k-1}}_{-\tau^{(\ell)}_w, t}\left(R^{(\ell),w}_{\tau^{(\ell)}_w}\right)
 \right)_{-\tau^{(\ell)}_w \le t \le 0} 
 $$
 Note that the starting point of the above process corresponds to the point where previous trajectories of the flow $X^{(\ell),w_{k-1}}$ coalesce due to the positive jump $\Delta \widetilde{P}_{w_{k-1}}(\tau^{(\ell)}_w)$.

Next, for all $j \in \lb 1, k-1 \rb$, we define inductively $Y^{(\ell), w_{j-1}, w}_t$ of every $t \in[-\beta^{(\ell)}_{w_j} , 0]$ by
$$
Y^{(\ell), w_{j-1}, w}_t 
\coloneqq 
X^{(\ell),w_{j-1}}_{-\beta^{(\ell)}_{w_j}, t}
\left(
R^{(\ell),w}_{\beta^{(\ell)}_{w_j}}  -  \frac{- \Delta \widetilde{P}_{w_{j-1}}(\beta^{(\ell)}_{w_j})}{\widetilde{P}_{w_{j-1}}(\beta^{(\ell)}_{w_j}-)}+ \frac{- \Delta \widetilde{P}_{w_{j-1}}(\beta^{(\ell)}_{w_j})}{\widetilde{P}_{w_{j-1}}(\beta^{(\ell)}_{w_j}-)} Y^{(\ell), w_j,w}_0
\right)
$$

Let us then describe the nearest common ancestor of two faces $f$ and $f'$ of $\mathfrak{m}$ in the tree of fpp geodesics to the root using the above-defined trajectories. When $f \neq f_r$ (resp.\@ $f' \neq f_r$), the face $f$ (resp.\@ $f'$) corresponds to the $i$-th (resp. $i'$-th) largest positive jump of $P_{w}$ (resp.\@ $P_{w'}$), i.e.\@ $f=f_{wi}$ (resp.\@ $f'=f_{w'i'}$). Note that the fpp geodesics from $f$ and $f'$ to the root cannot coalesce until they lie in the same branch. See Figure \ref{fig: coalescence geodesique}.

\begin{figure}[h]
	\centering
	\includegraphics[width=0.43\textwidth]{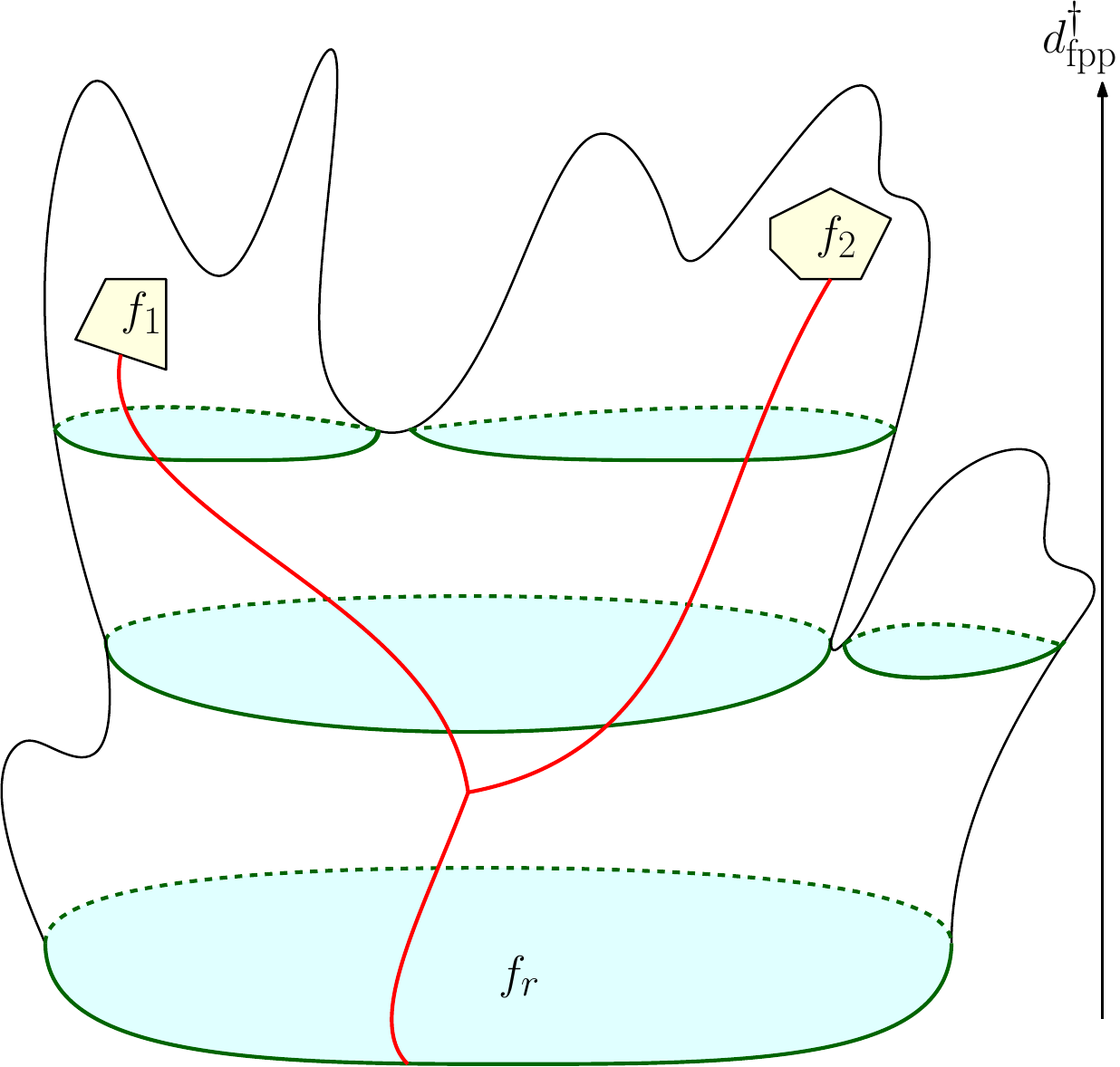}
	\caption{Coalescence of the fpp geodesics from $f_1$ and $f_2$ to the root face $f_r$.} 
	\label{fig: coalescence geodesique}
\end{figure}

The nearest common ancestor $c^{(\ell)}(f, f')$ in the tree of fpp geodesics to the root is characterized as follows:
\begin{itemize}
	\item If $f = f_r$ or $f'=f_r$, then $c^{(\ell)}(f,f')= f_r$;
	\item If $f=f'$, then $c^{(\ell)}(f,f')=f=f'$;
	\item If $f\neq f'$ and $f,f' \neq f_r$, then let $z$ be the largest prefix of $wi,w'i'$. Let $\emptyset = z_0 \prec z_1 \prec \ldots \prec z_k = z$ be the sequence of the prefixes of $z$. Let
	$$
	\sigma^{(\ell)}_{wi,w'i'}(z_k) \coloneqq \inf \{t \le 0 , \ Y^{(\ell), z_k, wi}_t ,Y^{(\ell), z_{k}, w'i'}_t \text{ are defined and } \{Y^{(\ell), z_k, wi}_t \} = \{Y^{(\ell), z_{k}, w'i'}_t\}\}.
	$$
	If $\sigma^{(\ell)}_{wi,w'i'}(z_k) <\infty$, then the trajectories $Y^{(\ell), z_{k}, wi}$ and $Y^{(\ell), z_{k}, w'i'}$ coalesce at time $\sigma^{(\ell)}_{wi,w'i'}(z_k)$. Then $\Delta \widetilde{P}_{z_k} (-\sigma^{(\ell)}_{wi,w'i'}(z_k))>0$, so that there exists a unique $j\in \N$ such that $\Delta \widetilde{P}_{z_k} (-\sigma^{(\ell)}_{wi,w'i'}(z_k))$ is the $j$-th largest positive jump of $P_{z_k}$. In other words, the fpp geodesics coalesce at the face $f_{z_k j}$. Then we have $c^{(\ell)}(f,f')= f_{z_k j}$. Otherwise, we set
	$$
	\sigma^{(\ell)}_{wi,w'i'}(z_{k-1}) \coloneqq \inf \{t \le 0 , \ \{Y^{(\ell), z_{k-1}, wi}_t \} = \{Y^{(\ell), z_{k-1}, w'i'}_t\}\}.
	$$
	If $\sigma^{(\ell)}_{wi,w'i'}(z_{k-1}) <\infty$, then the trajectories $Y^{(\ell), z_{k-1}, wi}$ and $Y^{(\ell), z_{k}-1, w'i'}$ coalesce at time $\sigma^{(\ell)}_{wi,w'i'}(z_{k-1}) $, so that there exists a unique $j \ge 1$ such that $\Delta \widetilde{P}_{z_{k-1}} (-\sigma^{(\ell)}_{wi,w'i'}(z_{k-1}))$ is the $j$-th largest positive jump of $P_{z_{k-1}}$. In other words, the fpp geodesics coalesce at the face $f_{z_{k-1} j}$. Then we have $c^{(\ell)}(f,f') = f_{z_{k-1}j}$, etc. Finally, if for all $m \in \lb 0,k \rb$ the trajectories do not coalesce, i.e.\@ $\sigma^{(\ell)}_{wi,w'i'}(z_m) = \infty$, then the fpp geodesics coalesce at $c^{(\ell)}(f,f')= f_r$.
\end{itemize}

Furthermore, the distance $d_{\T(\mathfrak{m})}$ induced by the fpp lengths on the tree $\T(\mathfrak{m})$ of fpp geodesics to the root is characterized by the relation
\begin{equation}\label{eq distance dans l arbre des geosesiques fpp}
d_{\T(\mathfrak{m})}(f,f') = d^\dagger_\mathrm{fpp}(f_r,f)+ d^\dagger_\mathrm{fpp}(f_r,f') - 2 d^\dagger_\mathrm{fpp}(f_r, c^{(\ell)}(f,f')).
\end{equation}

\subsection{Scaling limit of the tree of geodesics to the root}
In this subsection, assuming $a \in (3/2, 5/2)$, we prove Theorem \ref{Th limite d echelle de l arbre}. 

\begin{proof}[Proof of Theorem \ref{Th limite d echelle de l arbre}]By \eqref{eq cv Ptilde} and by the branching Markov property, one can check the joint convergence for all $w \in \U$,
\begin{equation}\label{eq cv ptilde w}
\text{Under }\P^{(\ell)}, \qquad \qquad \left(\frac{\widetilde{P}_w(t)}{\ell} \right)_{t\ge 0}
\mathop{\longrightarrow}\limits_{\ell \to \infty}^{(\mathrm{d})}
\left(e^{\xi_w(2c_a p_{\bf q} t)}\right)_{t\ge 0},
\end{equation}
{where we recall that the Lévy process $\xi_w$ is defined in Subsection \ref{sous-section definition de la limite}.}
Notice that the above convergence entails the scaling limit of the degrees of the faces, i.e.\@ jointly with the above convergence, $\deg( f_w)/ \ell $ converges in law towards $\Delta_w$ jointly for all $w \in \U$. Furthermore, the fact that one can enumerate the $\Delta_w$'s in decreasing order comes from Proposition 3 of \cite{CCM20}, which, combined with \eqref{eq cv ptilde w}, shows in particular that the law of the family of the $\Delta_w$'s is absolutely continuous with respect to the jumps of a stable Lévy process with no negative jump stopped when it reaches $-1$.

By \eqref{eq cv Ptilde}, jointly with the above convergence, we have the joint convergence of the jump times for all $w \in \U$, for all $i \in \N$,
$$
\text{Under }\P^{(\ell)}, \qquad \qquad
\left(\beta^{(\ell)}_{wi}, \tau^{(\ell)}_{wi}\right) \mathop{\longrightarrow}\limits_{\ell \to \infty}^{(\mathrm{d})} \frac{1}{2c_a p_{\bf q}} \left(\beta_{wi}, \tau_{wi}\right),
$$
hence the convergence of the times
$$
\text{Under }\P^{(\ell)}, \qquad \qquad
\left(b^{(\ell)}_{w}, t^{(\ell)}_{wi}\right) \mathop{\longrightarrow}\limits_{\ell \to \infty}^{(\mathrm{d})} \frac{1}{2c_a p_{\bf q}} \left(b_{w}, t_{wi}\right).
$$
Moreover, thanks to \eqref{eq cv Tn fini} and \eqref{eq distance fpp cellule}, using a Lamperti transform, jointly with the previous convergences, for all $w \in \U$, for all $i \in \N$,
$$
\text{Under }\P^{(\ell)}, \qquad \qquad
\left(\frac{1}{\ell^{a-2}} d^\dagger_\mathrm{fpp}(\partial \mathfrak{h}_w, \partial \mathfrak{h}_{wi}),\frac{1}{\ell^{a-2}}d^\dagger_\mathrm{fpp}(\partial \mathfrak{h}_w, f_{wi})\right) \mathop{\longrightarrow}\limits_{\ell \to \infty}^{(\mathrm{d})} \frac{1}{2c_a p_{\bf q}} \left(\widetilde{\beta}_{wi}, \widetilde{\tau}_{wi}\right),
$$
Therefore, by \eqref{eq distance fpp racine recurrence}, we have for all $w \in \U,i \in \N$,
\begin{equation}\label{eq cv distances fpp racine}
	\text{Under }\P^{(\ell)}, \qquad \qquad
\left(\frac{1}{\ell^{a-2}} d^\dagger_\mathrm{fpp}(f_r, \partial \mathfrak{h}_{w}),\frac{1}{\ell^{a-2}}d^\dagger_\mathrm{fpp}(f_r, f_{wi})\right) \mathop{\longrightarrow}\limits_{\ell \to \infty}^{(\mathrm{d})} \frac{1}{2c_a p_{\bf q}} \left(\widetilde{b}_{w}, \widetilde{t}_{wi}\right),
\end{equation}

Moreover, for all $d\ge 1$, for every sequences $s_i^{(\ell)} \to s_i <0$ and $v_i^{(\ell)} \to v_i \in \R$ for $i \in \lb 1, d \rb$, for all $w_1, \ldots, w_d \in \U$, jointly with the previous convergences, a consequence of Theorem \ref{th limite du coalescent} is the joint convergence for all $i \in \lb 1, d \rb$,
\begin{equation}\label{eq cv flot w}
	\text{Under }\P^{(\ell)}, \quad 
	\begin{pmatrix}
	\left(X^{(\ell),w_i}_{s_i^{(\ell)},t}(v_i^{(\ell)})\right)_{t \in [s_i^{(\ell)},0]}\\
	\left(
	\frac{\widetilde{P}_{w_i}(t)}{\widetilde{P}_{w_i}(t-)}, U^{(\ell),w_i}_t
	\right)_{t \ge 0}
	\end{pmatrix}
	\mathop{\longrightarrow}\limits_{\ell \to \infty}^{(\mathrm{d})}
	\begin{pmatrix}
	\left(X^{w_i}_{2c_a p_{\bf q} s_i,2c_a p_{\bf q} t}(v_i)\right)_{t \in [s_i,0]} 
	\\
	\left(
	Z^{w_i}_{-t}, U^{w_i}_{-t}
	\right)_{t \ge 0}
	\end{pmatrix},
\end{equation}
where the first coordinate converges for the Skorokhod $J_1$ topology while the second coordinate converges for the topology of weak convergence of measures on $\R_+ \times \R_+^*\setminus \{1\} \times [0,1]$. The above convergence for the Skorokhod topology is written on an interval $[s_i^{(\ell)}, 0]$ whose length varies as $\ell$ varies but is simply defined as the convergence for the Skorokhod topology on $\mathbb{D}(\R_-, \R)$ where we extend the process on $(-\infty,s_i^{(\ell)})$ with its value at $s_i^{(\ell)}$. In particular, we get the convergence of the trajectories jointly with the previous ones: for all $w \in \U$,
\begin{equation}\label{eq cv trajectoires}
	\text{Under }\P^{(\ell)}, \qquad \qquad
	(Y^{(\ell),z,w})_{z\prec w} 
	\mathop{\longrightarrow}\limits_{\ell \to \infty}^{(\mathrm{d})}
	(Y^{z,w})_{z \prec w}.
\end{equation}
By Skorokhod's representation theorem, we may assume that the above convergences hold almost surely and we add a superscript $(\ell)$ to $\widetilde{P}_w$ and to the faces $f_w$ to distinguish them in the same probability space. Let $w\neq w' \in \U$. 

Let us first focus on the dense case $a\in (3/2,2)$, where by definition $c(w,w')$ is an element of $\U$. By Equations \eqref{eq distance dans l arbre des geosesiques fpp} and  \eqref{eq distance sur T}, it suffices to prove that almost surely for $\ell$ large enough,
	\begin{equation}\label{eq cv temps de coalescence}
	c^{(\ell)}(f^{(\ell)}_w,f^{(\ell)}_{w'}) = f^{(\ell)}_{c(w,w')}.
	\end{equation}
In order to check this, we use the convergence \eqref{eq cv flot w}. 

If $c(w,w')=\emptyset$, then the trajectories $(Y^{z,w})_{z \prec w} $ and $(Y^{z,w'})_{z\prec w'}$ do not coalesce. Then by \eqref{eq cv trajectoires} we know that for all $\ell$ large enough the trajectories $(Y^{(\ell),z,w})_{z \prec w} $ and $(Y^{(\ell),z,w'})_{z\prec w'}$ do not coalesce, hence \eqref{eq cv temps de coalescence}.

Otherwise, if $c(w,w')\neq \emptyset$, then one can write $c(w,w')= zj$ for some $z \in \U,j \in \N$. Then, by \eqref{eq cv flot w}, we have a.s. the convergence of the jump
\begin{equation}\label{eq cv saut coalescence}
\left(
\frac{\widetilde{P}^{(\ell)}_{z}(\tau_{zj}^{(\ell)})}{\widetilde{P}^{(\ell)}_{z}(\tau^{(\ell)}_{zj}-)}, U^{(\ell),z}_{\tau^{(\ell)}_{zj}}
\right)
\mathop{\longrightarrow}\limits_{\ell \to \infty}^{(\mathrm{a.s.})}
\left(Z^{z}_{-\tau_{zj}}, U^{z}_{-\tau_{zj}} \right).
\end{equation}
But by taking \eqref{eq cv trajectoires} into account, one obtains \eqref{eq cv temps de coalescence}. Indeed, by the convergence of the trajectories, we know that a.s.\@ for $\ell$ large enough, the two trajectories will not coalesce until the jump associated with the face $f^{(\ell)}_{c(w,w')}$. But at the same time, by the convergence of the trajectories and by the convergence of the jump, a.s.\@ for $\ell$ large enough they have to coalesce due to this jump. See Figure \ref{fig: coalescence geodesique saut}. Let us turn the idea on Figure \ref{fig: coalescence geodesique saut} into a rigourous proof. Let $T<-\tau_{zj}$. By \eqref{eq cv trajectoires}, for all $\ell \ge 1$, let $\psi_w^{(\ell)}$ and $\psi_{w'}^{(\ell)}$ be increasing homeomorphisms from $[T,0]$ to $[T,0]$ such that 
\begin{equation}\label{eq cv unif Y w w prime}
\sup_{t \in [T,0]} \left\vert Y^{(\ell), z, w}_{\psi_w^{(\ell)}(t)}  - Y^{z,w}_t\right\vert + \sup_{t \in [T,0]} \left\vert  \psi_w^{(\ell)}(t)-t\right\vert
+ \sup_{t \in [T,0]} \left\vert Y^{(\ell), z, w'}_{\psi_{w'}^{(\ell)}(t)}  - Y^{z,w'}_t\right\vert + \sup_{t \in [T,0]} \left\vert  \psi_{w'}^{(\ell)}(t)-t\right\vert
\mathop{\longrightarrow}\limits_{\ell \to \infty}^{(\mathrm{a.s.})} 0.
\end{equation}
Almost surely for $\ell$ large enough, we have
\begin{equation}\label{eq identite temps de coalescence}
\psi_w^{(\ell)}(-\tau_{zj}) = \psi_{w'}^{(\ell)}(-\tau_{zj}) = -\tau^{(\ell)}_{zj}.
\end{equation}
Indeed, by \eqref{eq cv unif Y w w prime},
$$
\left(Y^{(\ell), z, w}_{\psi_w^{(\ell)}(-\tau_{zj})},
Y^{(\ell), z, w}_{\psi_w^{(\ell)}(-\tau_{zj})-},
Y^{(\ell), z, w'}_{\psi_{w'}^{(\ell)}(-\tau_{zj})},
Y^{(\ell), z, w'}_{\psi_{w'}^{(\ell)}(-\tau_{zj})-}
\right)
\mathop{\longrightarrow}\limits_{\ell \to \infty}^{(\mathrm{a.s.})}
\left(
Y^{z,w}_{-\tau_{zj}},
Y^{z,w}_{-\tau_{zj}-},
Y^{z,w'}_{-\tau_{zj}},
Y^{z,w'}_{-\tau_{zj}-}
\right),
$$
and using that the jumps of $Y^{z,w}$ are a.s.\@ distinct given that the image of $\lambda(dz) du$ by $(z,u) \mapsto g(0,z,u)$ has no atom{s} (and the same holds for $Y^{z,w'}$), we deduce \eqref{eq identite temps de coalescence} from \eqref{eq cv saut coalescence}. Therefore,
$$
\left(Y^{(\ell), z, w}_{-\tau_{zj}^{(\ell)}},
Y^{(\ell), z, w}_{-\tau^{(\ell)}_{zj}-},
Y^{(\ell), z, w'}_{-\tau^{(\ell)}_{zj}},
Y^{(\ell), z, w'}_{-\tau^{(\ell)}_{zj}-}
\right)
\mathop{\longrightarrow}\limits_{\ell \to \infty}^{(\mathrm{a.s.})}
\left(
Y^{z,w}_{-\tau_{zj}},
Y^{z,w}_{-\tau_{zj}-},
Y^{z,w'}_{-\tau_{zj}},
Y^{z,w'}_{-\tau_{zj}-}
\right),
$$
But since $Y^{z,w}$ and $Y^{z,w'}$ coalesce, and since the law of $U^z_{-\tau_{zj}}$ has no atom{s},
$$
\{Y^{z,w}_{-\tau_{zj}-} -U^z_{-\tau_{zj}}\}< (Z^z_{-\tau_{zj}}-1)/Z^z_{-\tau_{zj}} \qquad \text{and}\qquad
\{Y^{z,w'}_{-\tau_{zj}-} -U^z_{-\tau_{zj}}\}< (Z^z_{-\tau_{zj}}-1)/Z^z_{-\tau_{zj}}.
$$
Thus, by \eqref{eq cv saut coalescence}, a.s.\@ for $\ell$ large enough,
$$
\{Y^{(\ell),z,w}_{-\tau^{(\ell)}_{zj}-} - U^{(\ell),z}_{\tau^{(\ell)}_{zj}}\}\vee \{Y^{(\ell),z,w'}_{-\tau^{(\ell)}_{zj}-} - U^{(\ell),z}_{\tau^{(\ell)}_{zj}}\}
\le \left(\frac{\widetilde{P}^{(\ell)}_{z}(\tau_{zj}^{(\ell)})}{\widetilde{P}^{(\ell)}_{z}(\tau^{(\ell)}_{zj}-)}-1\right)/\left(\frac{\widetilde{P}^{(\ell)}_{z}(\tau_{zj}^{(\ell)})}{\widetilde{P}^{(\ell)}_{z}(\tau^{(\ell)}_{zj}-)}\right).
$$
So, a.s.\@ for $\ell$ large enough, the trajectories $Y^{(\ell),z,w},Y^{(\ell), z, w'}$ coalesce at time $-\tau^{(\ell)}_{zj}$, hence \eqref{eq cv temps de coalescence}.

\begin{figure}[h]
	\centering
	\includegraphics[width=0.4\textwidth]{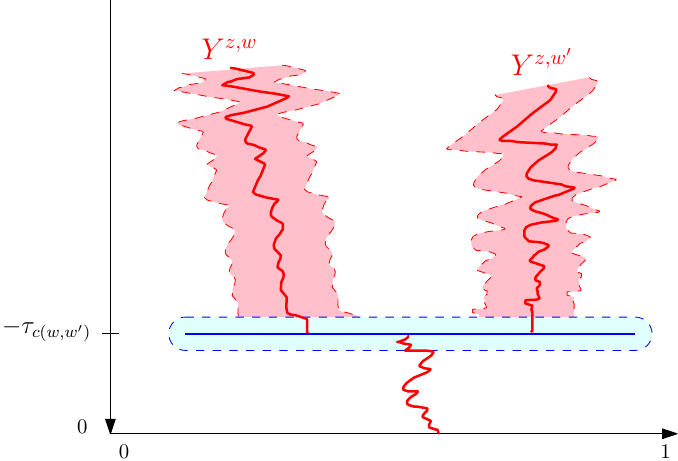}
	\caption{Sketch of the end of the proof of Theorem \ref{Th limite d echelle de l arbre} in the case $a \in (3/2,2)$: the two trajectories $Y^{z,w}$ and $Y^{z,w'}$ for some $z \prec w,w'$ in red coalesce due to the jump $\Delta_{c(w,w')}$ in blue. A neighbourhood of the trajectories is drawn in pink and a neighbourhood of the jump is in light blue.}
	\label{fig: coalescence geodesique saut}
\end{figure}

Next, we deal with the case $a \in [2,5/2)$. If $c(w,w')=\emptyset$, then we conclude as before. When $c(w,w') \neq \emptyset$, then by definition $c(w,w') = (\Delta_{zi_n})_{n\ge 0}\in \mathcal{J}_a$ for some common prefix $z\prec w,w'$, where $\tau_{zi_n} $ is an increasing sequence which converges towards the time $-\sigma_{w,w'}(z)$ which is the time at which the trajectories coalesce. Let $\vp>0$. Let $n$ large enough such that $\widetilde{t}_{zi_n} \ge \widetilde{t}_{c(w,w')}-\vp$. Then, by the same reasoning as in the case $a\in (3/2,2)$, by \eqref{eq cv trajectoires} we know that a.s.\@ for $\ell$ large enough, $f^{(\ell)}_{zi_n}$ is a common ancestor of $f^{(\ell)}_w,f^{(\ell)}_{w'}$. As a result,
$$
d_{\T(\mathfrak{m})} (f^{(\ell)}_w,f^{(\ell)}_{w'}) \le d^\dagger_{\mathrm{fpp}}(f^{(\ell)}_r,f^{(\ell)}_w)+ d^\dagger_{\mathrm{fpp}}(f^{(\ell)}_r,f^{(\ell)}_{w'})- 2d^\dagger_{\mathrm{fpp}}(f^{(\ell)}_r,f^{(\ell)}_{zi_n}).
$$
Consequently, a.s.
$$
\limsup_{\ell \to \infty} \frac{1}{\ell^{a-2}}d_{\T(\mathfrak{m})} (f^{(\ell)}_w, f^{(\ell)}_{w'})
\le
\frac{1}{2c_a p_{\bf q}}\left(\widetilde{t}_w + \widetilde{t}_{w'}- 2 \widetilde{t}_{zi_n}
 \right)
 \le \frac{1}{2 c_a p_{\bf q}} \left( d_{\T_a}(w,w') +2 \vp \right).
$$
One can then let $\vp \to 0$. Finally the almost sure inequality
$$
\liminf_{\ell \to \infty} \frac{1}{\ell^{a-2}} d_{\T(\mathfrak{m})} (f^{(\ell)}_w, f^{(\ell)}_{w'})\ge \frac{1}{2c_a p_{\bf q}} d_{\T_a}(w,w')
$$
comes directly form \eqref{eq cv trajectoires} since if at some time the trajectories $(Y^{z,w})_{z \prec w}$ and $(Y^{z,w'})_{z\prec w'}$ have not yet coalesced, then for $\ell$ large enough the same holds for the trajectories $(Y^{(\ell),z,w})_{z \prec w}$ and $(Y^{(\ell),z,w'})_{z\prec w'}$. This ends the proof.
\end{proof}

\section{Conjectures}\label{section conjectures}

Our choice of topology for Theorem \ref{Th limite d echelle de l arbre} comes from the fact that $\T_a$ is non-compact as soon as $a \le 2$. Still, in the dilute case $a \in (2, 5/2)$, we expect that the scaling limit of Theorem \ref{Th limite d echelle de l arbre} holds in the sense of Gromov-Hausdorff, after taking the completion of $\T_a$. We also expect a Gromov-Hausdorff-Prokhorov scaling limit in the dilute case when we equip the map with the uniform measure on the vertices/faces/edges.

Next, we state a few conjectures on the scaling limit of the map itself. We first define a random countable metric space which is conjecturally the scaling limit of large planar maps with high degrees. Let us define another distance by inserting shortcuts in the metric space $\T_a$ {which was defined in Subsection \ref{sous-section definition de la limite}}. To introduce our shortcuts, it will be more convenient to focus on the macroscopic positive jumps. For all $\vp \in (0,1)$, we define $(\T_a^\vp,d_{\T_a^\vp})$ as the subspace of $\T_a$ consisting of the elements $w\in \T_a$ such that $\Delta_w>\vp$. We now modify the distance $d_{\T_a^\vp}$ using the negative jumps of the $\xi_w$'s for $w\in \U$. 

Let $w\in \U\setminus \{\emptyset\}$. Let us denote by $\emptyset = w_0 \prec w_1 \prec \ldots \prec w_k  = w$ its prefixes. From the negative jump $x_{w}$ of $\mathcal{X}_{w_{k-1}}$, we define two trajectories: one starting on the left of the ``gap'' created by the jump, one starting from the right. Set $\mathcal{Y}^{w,w}_L(0)=0$ and $\mathcal{Y}^{w,w}_R(0)=1$. For all $i \in \{L,R\}$, for all $j \in \lb 1, k\rb$, we define inductively
\begin{align*}
	(\mathcal{Y}_i^{w_{j-1},w}(t))_{-\beta_{w_j}\le t \le 0} \coloneqq \left(X^{w_{j-1}}_{-\beta_{w_j},t}\left(
	R^{w_{j-1}}_{-\beta_{w_j}}
	-(1- Z^{w_{j-1}}_{-\beta_{w_j}}) + (1-Z^{w_{j-1}}_{-\beta_{w_j}})\mathcal{Y}_i^{w_j,w}(0)
	\right)\right)_{ -\beta_{w_j}\le t\le0}.
\end{align*}
For all $\vp \in (0,1)$, $i \in \{L,R\}$, let $J_i^{w,\vp}$ be the largest $j\in \lb 1, k \rb$ such that there exists $t\in [-\beta_{w_j},0]$ satisfying $\Delta \mathcal{X}_{w_{j-1}} (-t) >\vp$ and $
\left\{\mathcal{Y}_i^{w_{j-1},w}(t-) - U^{w_{j-1}}_t\right\} \le (Z^{w_{j-1}}_t -1)/{Z^{w_{j-1}}_t}.$
When it exists, we also define $V^{w,\vp}_i \in \T^\vp_a\setminus\{\emptyset\}$ as the index of the positive jump such that $-\tau_{V_i^{w,\vp}}$ is the smallest $t\in [-\beta_{w_j},0]$ to satisfy the above conditions. If $J_i^{w,\vp}$ does not exist, then we set $J^{w,\vp}_i=0$ and $V^{w,\vp}_i=\emptyset$. In other words, $V^{w,\vp}_L$ (resp. $V^{w,\vp}_R$) corresponds to the first coalescence corresponding to a positive jump of size larger than $\vp$ happening in the trajectory $(\mathcal{Y}_L^{w',w})_{w'\prec w}$ (resp. $(\mathcal{Y}_R^{w',w})_{w'\prec w}$). 

For all $w \in \U \setminus \{\emptyset\}$, we define the length $\mathcal{L}^\vp_w$ of the shortcut between $V^{w,\vp}_L$ and $V^{w,\vp}_R$ associated with the negative jump $x_w$ by
$
\mathcal{L}^\vp_w
=\widetilde{b}_w-\widetilde{t}_{V_L^{w,\vp}} +\widetilde{b}_w-\widetilde{t}_{V_R^{w,\vp}}
$. 
In the case $a=2$, the expression simplifies to $\mathcal{L}^\vp_w = b_w-t_{V_L^{w,\vp}} +b_w-t_{V_R^{w,\vp}}$. Moreover, for all $u,v \in \T_a^\vp$, we define the length of the smallest shortcut between $u$ and $v$ by setting
$$
\mathcal{L}^\vp(u,v) \coloneqq \inf \left\{\mathcal{L}^\vp_w ; \ w \in \U\setminus \{\emptyset\} \  \text{s.t.} \ (u,v)= (V^{w,\vp}_L, V^{w,\vp}_R) \ \text{or} \ (u,v) = (V^{w,\vp}_R,V^{w,\vp}_L) \right\}
\in [0,\infty].
$$
We set $\mathcal{D}^\vp \coloneqq \T_a^\vp$ and we equip $\mathcal{D}^\vp$ with the distance $d_{\mathcal{D}^\vp}$ defined for all $v,w \in \mathcal{D}^\vp$ by
\begin{align*}
	&d_{\mathcal{D}^\vp}(v,w)=\\
	&\inf\left\{ d_{\T_a}(v,v_1) + \sum_{k=1}^n \left( \mathcal{L}^\vp(v_k,w_k)+ d_{\T_a}(w_k,v_{k+1}) \right); \ 
	n\in \N, \
	v_1,\ldots, v_{n+1}, w_1,\ldots, w_{n} \in \mathcal{D}^\vp, \
	v_{n+1}=w
	\right\}.
\end{align*}
Note that by definition of the shortcuts we have $d_{\mathcal{D}^\vp}(v,w) \ge \vert d_{\T_a} (\emptyset,v)- d_{\T_a} (\emptyset, w) \vert$, so that positivity is preserved since $d_{\T_a} (\emptyset,v)\neq d_{\T_a} (\emptyset, w)$ as soon as $v \neq w$.
\begin{lemma}
	For all $\vp \in (0,1)$, $\vp' \in (0,\vp)$ and $v,w \in \mathcal{D}^\vp$, $
	d_{\mathcal{D}^{\vp'}}(v,w)\le d_{\mathcal{D}^{\vp}}(v,w).
	$
\end{lemma}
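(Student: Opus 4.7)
The plan is to lift an arbitrary $\mathcal{D}^\vp$-path from $v$ to $w$ to a $\mathcal{D}^{\vp'}$-path of at most the same total length; infimizing then yields the inequality. The inclusion $\mathcal{T}_a^\vp \subseteq \mathcal{T}_a^{\vp'}$ (since $\vp' < \vp$ loosens the jump-size threshold) gives $\mathcal{D}^\vp \subseteq \mathcal{D}^{\vp'}$, so every existing waypoint in a $\mathcal{D}^\vp$-path is automatically admissible as a $\mathcal{D}^{\vp'}$-waypoint; what must change is the shortcut length.

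Concretely, given a $\mathcal{D}^\vp$-path with intermediate waypoints $v_1, \dots, v_{n+1}=w$, $w_1, \dots, w_n$, for each $k$ I pick (up to an arbitrarily small error) $u_k \in \mathbb{U}\setminus\{\emptyset\}$ realizing $\mathcal{L}^\vp(v_k, w_k) = \mathcal{L}^\vp_{u_k}$, so that $\{v_k, w_k\} = \{V_L^{u_k,\vp}, V_R^{u_k,\vp}\}$; assume WLOG $v_k = V_L^{u_k,\vp}$ and $w_k = V_R^{u_k,\vp}$. Since $V_L^{u_k,\vp'}, V_R^{u_k,\vp'} \in \mathcal{T}_a^{\vp'} = \mathcal{D}^{\vp'}$, I replace each $k$-th shortcut by the three-step concatenation $v_k \to V_L^{u_k,\vp'} \to V_R^{u_k,\vp'} \to w_k$ using tree distance, the $\vp'$-shortcut $\mathcal{L}^{\vp'}_{u_k}$, and tree distance. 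The central identity to verify is
\begin{equation*}
d_{\mathcal{T}_a}(V_L^{u,\vp}, V_L^{u,\vp'}) + \mathcal{L}^{\vp'}_u + d_{\mathcal{T}_a}(V_R^{u,\vp'}, V_R^{u,\vp}) = \mathcal{L}^\vp_u, \quad u \in \mathbb{U}\setminus\{\emptyset\}.
\end{equation*}
Granting this, writing the refined path in the canonical $\mathcal{D}^{\vp'}$-form with shortcuts $\mathcal{L}^{\vp'}_{u_k}$ between the pairs $(V_L^{u_k,\vp'}, V_R^{u_k,\vp'})$ and applying the triangle inequality for $d_{\mathcal{T}_a}$ to absorb the intermediate tree steps through $v_k$ and $w_k$ into the connecting tree distances yields a valid $\mathcal{D}^{\vp'}$-path of length at most that of the original $\mathcal{D}^\vp$-path.

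The main obstacle is thus the central identity. Substituting $\mathcal{L}^\vp_u = 2\widetilde b_u - \widetilde t_{V_L^{u,\vp}} - \widetilde t_{V_R^{u,\vp}}$ and similarly for $\vp'$, it reduces to the pair of tree-distance formulas $d_{\mathcal{T}_a}(V_L^{u,\vp}, V_L^{u,\vp'}) = \widetilde t_{V_L^{u,\vp'}} - \widetilde t_{V_L^{u,\vp}}$ and its $R$-analogue, i.e., to showing that $V_L^{u,\vp}$ is a tree-ancestor of $V_L^{u,\vp'}$ in the sense $\widetilde t_{c(V_L^{u,\vp}, V_L^{u,\vp'})} = \widetilde t_{V_L^{u,\vp}}$. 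The key input is that $\vp' < \vp$ promotes every $\vp$-qualifying coalescence event on the trajectory $\mathcal{Y}_L^{\cdot, u}$ to a $\vp'$-qualifying one; thus $\mathcal{Y}_L$ undergoes both the $V_L^{u,\vp'}$-absorption and the $V_L^{u,\vp}$-absorption. Since the hierarchical construction of $\mathcal{Y}_L^{w_{j-1}, u}$ links consecutive cell levels through $\mathcal{Y}_L^{w_j, u}(0)$, the trajectory $Y^{u_{J-1}, V_L^{u,\vp'}}$ (which coincides with $\mathcal{Y}_L$ after the $V_L^{u,\vp'}$-absorption) is itself absorbed at the $V_L^{u,\vp}$-event together with $\mathcal{Y}_L$. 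This forces $Y^{u_{J-1}, V_L^{u,\vp}}$ and $Y^{u_{J-1}, V_L^{u,\vp'}}$ to coincide on their common flow-time interval $[-\tau_{V_L^{u,\vp}}, 0]$, giving $\sigma_{V_L^{u,\vp}, V_L^{u,\vp'}} = -\tau_{V_L^{u,\vp}}$. The definition of $c$ then yields $\widetilde t_c = \widetilde b_{u_{J-1}} + \widetilde \tau_{V_L^{u,\vp}} = \widetilde t_{V_L^{u,\vp}}$ in both regimes $a \in (3/2,2)$, where $c$ is a single positive jump, and $a \in [2, 5/2)$, where $c$ is a sequence in $\mathcal{J}_a$ provided by Lemma~\ref{lemme coalescence trajectoires} whose Lamperti-limit time is precisely $\widetilde \tau_{V_L^{u,\vp}}$. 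The degenerate case $V_L^{u,\vp} = \emptyset$ is trivial, and the multi-level case $J_L^{u,\vp} \neq J_L^{u,\vp'}$ is handled by the same mechanism with a short induction tracing the trajectory through the chain of intermediate cells.
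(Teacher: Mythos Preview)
Your proof is correct and follows essentially the same approach as the paper's: both reduce the inequality to showing that each $\vp$-shortcut $\mathcal{L}^\vp_u$ can be replaced by the concatenation $V_L^{u,\vp} \to V_L^{u,\vp'} \to V_R^{u,\vp'} \to V_R^{u,\vp}$ of two tree segments and the $\vp'$-shortcut, and both prove the resulting identity by checking that $V_L^{u,\vp}$ is a tree-ancestor of $V_L^{u,\vp'}$ (and similarly on the right). Your write-up is more explicit about the path-lifting framework and the verification of the ancestor relation via the flow definition of $c(\cdot,\cdot)$, whereas the paper simply asserts the ancestor relation and computes the telescoping sum.
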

\begin{proof}
	It suffices to check that for all $w\in \U\setminus \{\emptyset\}$, we have $d_{\mathcal{D}^{\vp'}}(V^{w,\vp}_L, V^{w,\vp}_R) \le \mathcal{L}^\vp_w$. But 
	\begin{align*}
		d_{\mathcal{D}^{\vp'}}(V^{w,\vp}_L, V^{w,\vp}_R)
		&\le
		d_{\mathcal{D}^{\vp'}}(V^{w,\vp}_L, V^{w,\vp'}_L)
		+ \mathcal{L}^{\vp'}_w
		+ d_{\mathcal{D}^{\vp'}}(V^{w,\vp}_R, V^{w,\vp'}_R) 
		%\\
		%&= 
		%d_{\mathcal{D}^{\vp'}}(V^{w,\vp}_L, V^{w,\vp'}_L)+
		%\widetilde{b}_w -\widetilde{t}_{V_L^{w,\vp'}}
		%+ \widetilde{b}_w - \widetilde{t}_{V^{w,\vp'}_R}
		%+
		%d_{\mathcal{D}^{\vp'}}(V^{w,\vp}_R, V^{w,\vp'}_R)
		\\
		&=
		\big(\widetilde{t}_{V_L^{w,\vp'}}-\widetilde{t}_{V_L^{w,\vp}}\big)
		+
		\big(\widetilde{b}_w -\widetilde{t}_{V_L^{w,\vp'}}
		+ \widetilde{b}_w - \widetilde{t}_{V^{w,\vp'}_R}\big)
		+
		\big(\widetilde{t}_{V_R^{w,\vp'}}-\widetilde{t}_{V_R^{w,\vp}}\big)
		\\
		&=\mathcal{L}^\vp_w,
	\end{align*}
	where in the second line we used the fact that $V_L^{w,\vp}$ (resp.\@ $V_R^{w,\vp}$) is an ancestor of $V_L^{w,\vp'}$ (resp.\@ $V_R^{w,\vp'}$) in $\T_a$, in the sense that $c(V_L^{w,\vp}, V_L^{w,\vp'})= V_L^{w,\vp}$ (resp.\@ $c(V_R^{w,\vp}, V_R^{w,\vp'})= V_R^{w,\vp}$).
\end{proof}
Notice also that the sets $\mathcal{D}^\vp$ increase as $\vp$ decreases. One can thus define $\mathcal{D}_a= \mathcal{T}_a$ and the distance $d_{\mathcal{D}_a}$ on $\mathcal{D}_a$ by writing for all $v,w \in \mathcal{D}_a$,  
$$
d_{\mathcal{D}_a}(v,w) = \lim_{\vp \to 0} d_{\mathcal{D}^\vp}(v,w),
$$
where the term on the right is well defined for $\vp$ small enough. The function $d_{\mathcal{D}_a}$ satisfies the triangle inequality and symmetry since it is a limit of distances. Moreover, it also satisfies positivity since $d_{\mathcal{D}^\vp}(v,w) \ge \vert d_{\T_a} (\emptyset,v)- d_{\T_a} (\emptyset, w) \vert$.

\begin{conjecture}
	Suppose $a \in (3/2,5/2)$. Then, for the product topology,
	$$
	\text{Under } \P^{(\ell)}, \qquad \qquad
	\left(\ell^{2-a}d^\dagger_\mathrm{fpp}(f_i,f_j)\right)_{i,j \ge 1}
	\mathop{\longrightarrow}\limits_{\ell \to \infty}^{(\mathrm{d})}
	\left(\frac{1}{2c_a p_{\bf q}}d_{\mathcal{D}_a} (w_i,w_j)
	\right)_{i,j \ge 1}.
	$$
\end{conjecture}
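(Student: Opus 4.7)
The plan is to bootstrap Theorem \ref{Th limite d echelle de l arbre} by incorporating the shortcuts produced by the negative jumps of the cell perimeters $(\widetilde{P}_w)_{w\in\U}$. The basic mechanism is the following: whenever $\widetilde{P}_{w}$ has a negative jump $x_{wi}$ at time $\beta_{wi}$, the uniform peeling identifies two edges on the boundary of the explored region, creating a hole of perimeter $2|x_{wi}|$ that is filled by an independent Boltzmann sub-map (the cell $wi$). A face $f^{(\ell)}_v$ lying inside this sub-map is accessible from both sides of the identified edge, so that the fpp distance between two faces lying on the two sides of the hole is approximately the sum of their fpp distances to the boundary $\partial\mathfrak h_{wi}$. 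The trajectories $\mathcal{Y}^{\cdot,w}_L,\mathcal{Y}^{\cdot,w}_R$ introduced in Section \ref{section conjectures} are exactly the scaling limits of the fpp geodesics starting on the two sides of the gap, and $V^{w,\vp}_L,V^{w,\vp}_R$ identify the first macroscopic face (i.e., of degree $\ge 2\vp\ell$) visited by each of these trajectories, so that $\mathcal{L}^\vp_w$ captures the rescaled length of the associated shortcut.

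For the upper bound I would, for each $\vp>0$, apply Theorem \ref{th limite du coalescent} to the discrete trajectories starting on the two sides of each macroscopic gap ($|x_w|>\vp\ell$), together with the joint convergences of the positive-jump faces and their fpp distances established in the proof of Theorem \ref{Th limite d echelle de l arbre}, to produce explicit discrete paths realizing any concatenation $v,v_1,w_1,\ldots,w_n=w$ appearing in the definition of $d_{\mathcal{D}^\vp}$. Taking the infimum over such concatenations and then letting $\vp\to 0$ (using the monotonicity established in the lemma preceding the conjecture) yields
$$
\limsup_{\ell\to\infty}\ell^{2-a}\,d^\dagger_\mathrm{fpp}(f_i,f_j)\le \tfrac{1}{2c_a p_{\bf q}}\,d_{\mathcal{D}_a}(w_i,w_j).
$$

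For the lower bound I would decompose an almost-geodesic of $\mathfrak m^\dagger$ between $f_i$ and $f_j$ as a sequence of subpaths, each contained in a single cell of the branching peeling exploration, glued along identified edges arising from negative jumps. The branching Markov property ensures that each intra-cell subpath is an fpp near-geodesic in an independent Boltzmann map of smaller perimeter, to which the same analysis applies inductively. By retaining only the macroscopic pieces (those living in cells $w$ with $|x_w|>\vp\ell$ or leading to faces $f_w$ with $\deg f_w\ge 2\vp\ell$), the rescaled decomposition converges along subsequences to a concatenation of exactly the form allowed in $d_{\mathcal{D}^\vp}$. This would give $\liminf\ell^{2-a}\,d^\dagger_\mathrm{fpp}(f_i,f_j)\ge d_{\mathcal{D}^\vp}(w_i,w_j)/(2c_a p_{\bf q})$, and the matching bound after $\vp\to 0$.

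The main obstacle, particularly in the dense regime $a\in(3/2,2)$, is controlling the cumulative contribution of the infinitely many microscopic negative jumps. A priori a geodesic could exploit a long sequence of sub-threshold shortcuts and achieve a length strictly smaller than $d_{\mathcal{D}^\vp}(w_i,w_j)$ for every $\vp>0$; one must rule this out by showing that no macroscopic savings emerge in the $\vp\to 0$ limit. This likely requires a quantitative form of Theorem \ref{th limite du coalescent} estimating how fast the trajectories $\mathcal{Y}_L^{\cdot,w},\mathcal{Y}_R^{\cdot,w}$ hit coalescences of size $\ge \vp$, together with entropy-type bounds on the number of admissible concatenations at scale $\vp$. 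A secondary difficulty is promoting the product-topology statement to a full Gromov-Hausdorff(-Prokhorov) convergence, which would demand uniform tightness of the rescaled discrete metric spaces and is presumably the reason the statement remains a conjecture rather than a theorem.
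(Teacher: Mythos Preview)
The statement you are attempting to prove is not a theorem in the paper but a \emph{conjecture}: the paper gives no proof whatsoever, only the definition of the candidate limit $(\mathcal{D}_a,d_{\mathcal{D}_a})$ and the statement. There is therefore nothing to compare your proposal against.

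That said, your sketch is a reasonable heuristic for why one should believe the conjecture, and you correctly isolate the genuine obstruction. The upper bound you outline is essentially sound: for each fixed $\vp>0$ and each finite concatenation in the definition of $d_{\mathcal{D}^\vp}$, the discrete shortcuts through macroscopic holes do converge via Theorem~\ref{th limite du coalescent} and the arguments of Section~\ref{section arbre geodesiques}, and monotonicity in $\vp$ then gives the $\limsup$ inequality. The lower bound is where the real difficulty lies, and your description of it---that an actual fpp geodesic might exploit an uncontrolled accumulation of sub-threshold shortcuts---is exactly the gap. Your proposed remedy (quantitative hitting-time estimates for the trajectories $\mathcal{Y}^{\cdot,w}_{L/R}$ plus entropy bounds on concatenations) is plausible in spirit but is not something currently available; turning it into a proof would require substantial new ideas, especially in the dense regime $a\in(3/2,2)$ where small positive jumps are summable but small negative jumps are not, so the geometry of the shortcut structure is genuinely delicate. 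This is precisely why the author left the statement as a conjecture.

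One minor correction: your last paragraph conflates two separate issues. The Gromov--Hausdorff upgrade is a further conjecture (stated only for $a\in(2,5/2)$ in the paper), not the obstacle to the product-topology statement you are addressing; the obstacle to the product-topology statement is already the lower bound you identified.
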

\begin{conjecture}
	Suppose $a \in (2,5/2)$. Then, for the product topology,
	$$
	\text{Under } \P^{(\ell)}, \qquad \qquad
	\left(\ell^{2-a}d^\dagger_\mathrm{gr}(f_i,f_j)\right)_{i,j \ge 1}
	\mathop{\longrightarrow}\limits_{\ell \to \infty}^{(\mathrm{d})}
	\left(\frac{1+g_{\bf q}}{2c_a p_{\bf q}}d_{\mathcal{D}_a} (w_i,w_j)
	\right)_{i,j \ge 1},
	$$
	where $g_{\bf q} = (1/2) \sum_{k\ge 1} \nu(-k) (2k-1)$. Moreover, $\mathcal{D}_a$ has a compact completion and this convergence of metric spaces also holds in the sense of Gromov-Hausdorff for this completion of $\mathcal{D}_a$.
\end{conjecture}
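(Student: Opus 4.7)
The plan is to first establish the scaling limit of the fpp distance (the first conjecture), and then transfer it to the dual graph distance using a sharpened comparison result. For the fpp part, I would combine the scaling limit of the tree of geodesics in Theorem \ref{Th limite d echelle de l arbre} with an analysis of the shortcuts arising from the Boltzmann sub-maps filling the holes created at negative jumps of the perimeter process. By the branching Markov property, each negative jump $x_w$ of the cell $\mathcal{X}_w$ produces an independent Boltzmann sub-map of perimeter $2|x_w|$, and the fpp distance from one boundary point to another in this sub-map scales as $|x_w|^{a-2}$ times a random limit by self-similarity. A recursive comparison, using the coalescing flow of Section \ref{section flot} applied within each sub-map, should identify this limit with the shortcut length $\mathcal{L}^\vp_w$ modulo contributions from sub-sub-maps of perimeter smaller than $\vp$. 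The threshold $\vp \to 0$ amounts to discarding these microscopic contributions; this should be controlled by second-moment estimates based on the tail \eqref{queue nu} of $\nu$, keeping the cumulative length of ignored shortcuts summable.

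For the dual graph distance, the main ingredient is a sharpened pointwise comparison between $d^\dagger_{\mathrm{fpp}}$ and $d^\dagger_{\mathrm{gr}}$, refining Corollary \ref{cor inclusion des boules dilue} to a two-sided asymptotic ratio depending on the mean exposure $e_{\bf q}$. Theorem \ref{th nombre de faces} already yields the number of faces along a given fpp geodesic, so the upper bound on $d^\dagger_{\mathrm{gr}}$ comes for free; the harder direction is showing that any short graph path between two macroscopic faces has fpp length at least as large up to the right constant, which I would approach by a chaining argument over fpp balls together with a union bound over candidate graph paths. The scaling factor appearing in the limit should match the asymptotic edge-to-fpp-length ratio along a typical peeling trajectory, which, once tracked through both the tree portions and the shortcuts, produces the stated prefactor $(1+e_{\bf q})/(2c_a p_{\bf q})$.

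For the Gromov--Hausdorff convergence, the first missing ingredient is tightness of the rescaled diameter of $\mathfrak{m}$ under $\P^{(\ell)}$, which should be of order $\ell^{a-2}$ in the dilute case. A proof would follow the martingale techniques of \cite{Kam23,BBCK} adapted to the dilute regime, producing an upperbound analogous to Theorem \ref{th diametre cas dense}. Pre-compactness then reduces to a uniform entropy bound: the number of fpp balls of radius $\eta \ell^{a-2}$ needed to cover $\mathfrak{m}$ should be bounded by a function of $\eta$ only, which should follow from the cell decomposition of Section \ref{section arbre geodesiques}. The main obstacle, in my view, is showing that $d_{\mathcal{D}^\vp}$ stabilizes as $\vp \to 0$ to a genuine metric $d_{\mathcal{D}_a}$ on $\mathcal{T}_a$ (not just a pseudo-distance) and that the associated completion is compact almost surely; equivalently, one must rule out that the combined effect of microscopic shortcuts collapses distances between macroscopic points. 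This seems to require precise control of the cumulative contribution of shortcuts $\mathcal{L}^\vp_w$ as $\vp \to 0$, possibly through a moment estimate on fpp distances in small Boltzmann maps that is uniform in the perimeter.
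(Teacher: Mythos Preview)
The statement is a \emph{Conjecture} in the paper (Section~\ref{section conjectures}), not a theorem; the paper provides no proof. Your proposal is not a proof either but a research outline, and you are transparent about this (``should'', ``would approach'', ``the main obstacle, in my view, is\ldots''). So there is nothing to compare: both you and the author regard this as open.

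That said, let me flag where your outline underestimates the difficulty. First, you treat the fpp conjecture (the preceding conjecture in Section~\ref{section conjectures}) as an intermediate step, but that is already the heart of the problem. The shortcuts $\mathcal{L}^\vp_w$ defined in Section~\ref{section conjectures} use only the two boundary points $V_L^{w,\vp},V_R^{w,\vp}$ adjacent to a negative jump; showing that these are the \emph{only} shortcuts that matter in the limit---i.e.\ that a true fpp geodesic cannot take a materially shorter route through some other concatenation of sub-maps---is not addressed by the branching Markov property or second-moment tail estimates alone. Your recursive self-similarity picture is suggestive but does not by itself rule out geodesics that weave through many small holes.

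Second, the transfer from fpp to graph distance requires a two-sided pointwise comparison, and the paper only proves one inclusion (Corollary~\ref{cor inclusion des boules dilue}). Your proposed ``chaining argument over fpp balls together with a union bound over candidate graph paths'' for the reverse direction is the step where such programs typically break: the number of candidate graph paths is exponential, and a union bound requires concentration of fpp lengths along \emph{arbitrary} graph paths, not just along fpp geodesics (which is all Theorem~\ref{th nombre de faces} gives). This is a known obstacle in first-passage-percolation-versus-graph-metric comparisons and is not a formality.

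Third, you correctly identify compactness of the completion of $\mathcal{D}_a$ and the stabilization of $d_{\mathcal{D}^\vp}$ as $\vp\to 0$ as the central analytic obstacle; this is precisely what keeps the statement a conjecture. Tightness of the rescaled diameter in the dilute case is indeed known from \cite{BBCK}, so that part of your plan is on firmer ground, but it does not by itself yield Gromov--Hausdorff convergence without identifying the limit.
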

\begin{conjecture}
	Suppose $a =2$. Then, for the product topology,
	$$
	\text{Under } \P^{(\ell)}, \qquad \qquad
	\left(\frac{1}{\log \ell} d^\dagger_\mathrm{gr}(f_i,f_j)\right)_{i,j \ge 1}
	\mathop{\longrightarrow}\limits_{\ell \to \infty}^{(\mathrm{d})}
	\left(\frac{1}{2c_a}d_{\mathcal{D}_a} (w_i, w_j)
	\right)_{i,j \ge 1}.
	$$
\end{conjecture}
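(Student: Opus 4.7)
The plan is to combine the scaling limit of the coalescing flow (Theorem \ref{th limite du coalescent}), which holds for the whole range $a \in (3/2, 5/2]$, with the quadratic relation between the fpp and dual graph distances coming from Theorem \ref{th nombre de faces} at $a=2$, and with shortcut estimates for the dual graph distance through large faces.

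\smallskip

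\noindent\emph{Step 1: Scaling limit of fpp distances at logarithmic scale.} Following the strategy of Section \ref{section arbre geodesiques} adapted to $a=2$, one first shows that under $\P^{(\ell)}$,
$$
\left( \frac{1}{\log \ell} \bigl( d^\dagger_\mathrm{fpp}(f_i, f_j) \bigr)^2 \right)_{i,j \ge 1} \mathop{\longrightarrow}\limits_{\ell \to \infty}^{(\mathrm{d})} \left( \frac{1}{\pi^2 p_{\bf q}^2} d_{\T_2}(w_i, w_j) \right)_{i,j \ge 1}
$$
for the product topology. The convergence of the coalescing flow at $a=2$ is already provided by Theorem \ref{th limite du coalescent}; the logarithmic scaling of the fpp lengths comes from the identity $T_n / \log n \to 1/(p_{\bf q} \pi^2)$ used in the proof of Corollary \ref{cor inclusion des boules cas Cauchy}, transferred from $\P^{(1)}_\infty$ to $\P^{(\ell)}$ by the branching Markov property along the cell-system $(P_w)_{w \in \U}$. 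The squaring and the absence of a $\widetilde{t}$ transform are natural here since $\widetilde{t}_w = t_w$ when $a = 2$.

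\smallskip

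\noindent\emph{Step 2: Quadratic upper bound for the dual graph distance.} By Corollary \ref{cor inclusion des boules cas Cauchy} (suitably transferred to $\P^{(\ell)}$ via peeling + spatial Markov property), the dual graph distance along an fpp geodesic is asymptotically at most $\pi^2 p_{\bf q}^2$ times the square of the fpp distance. Combined with Step 1 this gives $d^\dagger_\mathrm{gr}(f_r, f_i)/\log \ell \lesssim d_{\T_2}(\emptyset, w_i)$. The conjecture is finer: it replaces $d_{\T_2}$ by $d_{\mathcal{D}_2} \le d_{\T_2}$ and halves the constant. The gap is filled by Step 3.

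\smallskip

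\noindent\emph{Step 3: Shortcuts through large faces.} Each negative jump $x_w$ of the limiting growth-fragmentation corresponds to a face of macroscopic degree $\sim \ell$ in the map. Such a face has dual graph diameter at most $2$, yet its boundary can contain pairs of faces whose fpp distance is of order $\log \ell$. These shortcuts are encoded by the lengths $\mathcal{L}^\vp_w$ in the definition of $d_{\mathcal{D}_2}$, whose endpoints $V^{w,\vp}_L, V^{w,\vp}_R$ are recovered from the two trajectories $\mathcal{Y}^{w',w}_L, \mathcal{Y}^{w',w}_R$ of the coalescing flow. Along an optimal dual graph path from $f_r$ to $f_i$, one alternates between fpp segments (to which Step 2 gives the quadratic cost) and shortcuts through large faces (contributing $O(1)$ dual graph steps each). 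Minimizing over such alternations should yield exactly $\frac{1}{2} d_{\mathcal{D}_2}(w_i, w_j)$, the $\frac{1}{2}$ arising from the optimization of the quadratic fpp cost against the constant gain of each shortcut.

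\smallskip

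\noindent\emph{Main obstacle.} The hardest step will be Step 3: identifying the constant $\frac{1}{2}$ and proving the lower bound. Concretely one needs to show that (i) the shortcut structure captured by $\mathcal{D}_2$, restricted to macroscopic negative jumps $x_w > \vp$, is essentially optimal as $\vp \to 0$, with sub-macroscopic shortcuts contributing negligibly, and (ii) the exact factor $\frac{1}{2}$ arises from the precise optimization of quadratic fpp costs against the constant shortcut savings, which is sensitive to the asymptotic shape of the geodesics inside the large faces. A further technical difficulty comes from the non-compactness of $\mathcal{T}_2$ at $a=2$, which forces one to work in the product topology and to control error terms through concentration estimates in the spirit of the supermartingales of Lemmas \ref{lemme martingale 1} and \ref{lemme martingale 2}, rather than relying on Gromov--Hausdorff compactness as in the dilute case.
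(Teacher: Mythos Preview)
The statement you are addressing is stated in the paper as a \emph{conjecture} (Section~\ref{section conjectures}); the paper offers no proof, only the construction of the candidate limit space $(\mathcal{D}_a,d_{\mathcal{D}_a})$. There is therefore no ``paper's own proof'' to compare against, and what you have written is a heuristic programme for an open problem, which you yourself flag as incomplete in the ``Main obstacle'' paragraph.

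Beyond that, your Step~1 contains a scaling error that propagates through the rest of the sketch. At $a=2$, Theorem~\ref{Th limite d echelle de l arbre} reads with $\ell^{2-a}=\ell^{0}=1$: the fpp tree distances $d_{\T(\mathfrak{m})}(f_i,f_j)$ under $\P^{(\ell)}$ converge in law \emph{without rescaling}, and in particular $d^\dagger_{\mathrm{fpp}}(f_r,f_i)=d_{\T(\mathfrak{m})}(f_r,f_i)$ is $O(1)$ as $\ell\to\infty$. Consequently $(d^\dagger_{\mathrm{fpp}}(f_i,f_j))^2/\log\ell\to 0$, and the displayed convergence in your Step~1 cannot hold. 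You have transplanted the quadratic relation of Corollary~\ref{cor inclusion des boules cas Cauchy} from $\P^{(1)}_\infty$ (where fpp radii grow without bound and the quadratic law relates radii of nested balls) to $\P^{(\ell)}$ (where the fpp diameter is bounded); the two regimes are not related by a simple substitution. The $\log\ell$ in the conjecture is not the square of an fpp distance but rather reflects the number of faces accumulated along fpp geodesics as the branching perimeter process sweeps through all scales from $\ell$ down to $1$.

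Step~2 inherits the same error, and your explanation of the constant $1/2$ in Step~3 as coming from ``optimization of quadratic fpp cost against constant shortcut savings'' rests on the incorrect quadratic picture. The qualitative intuition that large faces provide shortcuts (this is precisely what the construction of $d_{\mathcal{D}_a}$ via the lengths $\mathcal{L}^\vp_w$ encodes) is in the right spirit, but the quantitative mechanism you propose does not match the actual scalings. Even as a heuristic outline, the argument would need to be rebuilt starting from the correct $O(1)$ behaviour of fpp distances at $a=2$.
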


\paragraph{Acknowledgements.} I am grateful to Cyril Marzouk and Nicolas Curien their insightful suggestions and comments. I also acknowledge the support of the ERC consolidator grant 101087572 ``SuPer-GRandMa''. I thank an anonymous referee for useful comments and corrections. 

\bibliographystyle{alpha}
\bibliography{distances_CLE4_nouveau}

\end{document}